\documentclass[10pt,reqno]{amsart}

\usepackage[a4paper,left=35mm,right=35mm,top=30mm,bottom=30mm,marginpar=25mm]{geometry}

\usepackage[utf8]{inputenc}
\usepackage[T1]{fontenc}
\usepackage{textcomp}
\usepackage{dsfont}
\usepackage{amssymb}
\usepackage{amsmath}
\usepackage{amsthm}
\usepackage{mathtools}
\mathtoolsset{showonlyrefs=true}

\usepackage{graphicx}
\usepackage{xcolor}
\usepackage{ifthen}
\usepackage{enumitem}
\usepackage[nocompress, space]{cite}
\usepackage[displaymath,mathlines]{lineno}

\DeclareMathOperator{\tr}{tr}

\DeclareMathOperator{\divergence}{div}

\newcommand{\Rr}{\mathbb{R}}

\newcommand{\cws}{\overset{*}{\rightharpoonup}}

\newcommand{\dd}{\,\mathrm{d}}
\newcommand{\dx}{\,\mathrm{d}x}
\newcommand{\dy}{\,\mathrm{d}y}
\newcommand{\dt}{\,\mathrm{d}t}
\newcommand{\ds}{\,\mathrm{d}s}

\usepackage{microtype}

\renewcommand{\leq}{\leqslant}
\renewcommand{\geq}{\geqslant}

\numberwithin{equation}{section}

\theoremstyle{plain}
\newtheorem{theorem}{Theorem}[section]
\newtheorem{lemma}[theorem]{Lemma}
\newtheorem{corollary}[theorem]{Corollary}
\newtheorem{proposition}[theorem]{Proposition}

\newtheorem{hyp}{Assumption}

\theoremstyle{definition}
\newtheorem{definition}[theorem]{Definition}
\newtheorem{problem}{Problem}

\theoremstyle{remark}
\newtheorem{remark}[theorem]{Remark}

\allowdisplaybreaks

\usepackage{hyperref}
\hypersetup{
    colorlinks=true,
    linkcolor=blue,
    citecolor=blue,
    urlcolor=blue
}

\usepackage[final]{changes} %draft vs final
\definechangesauthor[name={Diogo}, color=blue]{DG}

\makeindex

\begin{document}

\title[Ranking Mean-Field Planning Games]{Ranking Mean-Field Planning Games}

\author{Ali Almadeh}
\address{King Abdullah University of Science and Technology (KAUST), CEMSE Division, Thuwal 23955-6900, Saudi Arabia.}
\email{ali.almadeh@kaust.edu.sa}

\author{Tigran Bakaryan}
\address{Institute of Mathematics NAS RA, Center for Scientific Innovation and Education, Yerevan State University, Yerevan, Armenia.}
\email{tigran.bakaryan@csie.am}

\author{Diogo Gomes}
\address{King Abdullah University of Science and Technology (KAUST), CEMSE Division, Thuwal 23955-6900, Saudi Arabia.}
\email{diogo.gomes@kaust.edu.sa}

\author{Melih \"{U}\c{c}er}
\address{King Abdullah University of Science and Technology (KAUST), CEMSE Division, Thuwal 23955-6900, Saudi Arabia.}
\email{melih.ucer@kaust.edu.sa}

\keywords{Mean field games; mean field planning; rank-based interactions; perspective function; monotone operators; BV weak solutions.}

\subjclass[2020]{
    91A16,
    35A01,
    47H05,
    35D30}

\thanks{The research was supported by KAUST baseline funds. The work of T.B. was supported by the Higher Education and Science Committee of the MESCS RA under Research Project No.~24IRF-1A001.}

\date{\today}
\begin{abstract}
This paper studies a one-dimensional Mean-Field Planning (MFP) system with a non-local, rank-based coupling. Using a potential formulation, we rewrite the system as an associated scalar partial differential equation. We prove an equivalence between classical solutions to the ranking MFP system with positive density and classical solutions to the associated potential problem, and we derive explicit reconstruction formulas.
We then identify a monotonicity structure in the associated operator, which, under strict convexity assumptions, yields uniqueness of classical solutions to the associated problem and, hence, uniqueness of the ranking MFP system up to an additive constant in the value function. Finally, under superlinear growth assumptions, we exploit monotonicity to address existence in a low-regularity setting. By formulating a variational inequality for a $q$-Laplacian regularized operator, we apply Minty’s method to establish the existence of weak solutions in the space of functions of bounded variation for a relaxed potential formulation. 
\end{abstract}

\maketitle
%\tableofcontents

\section{Introduction}\label{introd}

Mean-Field Game (MFG) theory investigates Nash equilibria in populations of infinitesimal agents where individual costs and dynamics are influenced by the aggregate distribution of states. These models were introduced independently through two distinct perspectives. The PDE-based approach, presented in abridged form in \cite{ll1, ll2} and, later, in more detail in \cite{lasryMeanFieldGames2007},  
characterizes equilibria via a system of coupled partial differential equations. Simultaneously, the Nash Certainty Equivalence (NCE) principle was developed for large-population stochastic dynamic games \cite{huangLargePopulationStochastic2006, Caines2}.
Existence, uniqueness, and regularity for these systems are nontrivial. Classical solutions for time-dependent models have been established across various regimes, including sub-quadratic \cite{gomesTimeDependentMeanFieldGames2015} and super-quadratic Hamiltonians \cite{gomesTimedependentMeanfieldGames2016a}, as well as logarithmic nonlinearities \cite{Gomes2015b}. Further refinements include Sobolev estimates for the Hamilton--Jacobi component \cite{cpt}, which are central to the analysis of first-order models. For a detailed account of these techniques, see \cite{GPV}.

The Mean-Field Planning problem (MFP) was originally introduced in \cite{LCDF} (lectures on November 27th, December 4th-11th, 2009). Unlike standard MFGs, where a terminal cost is fixed, the MFP prescribes both the initial and terminal distributions, $m_0$ and $m_T$, while leaving the terminal value function $u(T, \cdot)$ free. In MFP problems, the value function $u$ is determined only up to an additive constant. In this framework, a central planner seeks to guide a population toward a target final state, assuming the infinitesimal agents continue to select strategies that minimize their individual costs.
The MFP reads
\begin{equation*}
\left\{\begin{array}{cl}
    &-u_{t} + H\left(x, Du\right) = f\left(x,m\right), \\
    &m_t - \divergence\left(mD_{p}H\left(x,Du\right)\right) = 0, \\
    &m\left(0,x\right) = m_0\left(x\right),\quad m\left(T,x\right) = m_T\left(x\right).
\end{array}\right.
\end{equation*}
Here, $H:\Rr^d\times\Rr^d\to \Rr$ is the Hamiltonian, and $D_{p} H(x, p)$ denotes the gradient of $H$ with respect to the momentum variable $p$.

Existence and uniqueness of solutions for MFPs with quadratic-growth Hamiltonians and monotone couplings were established in \cite{LCDF} (lectures on November 27th, December 4th-11th, 2009).
 This work was subsequently extended to second-order MFPs involving diffusion \cite{porretta}, which established the existence of weak solutions. Numerically, the MFP has been addressed through finite-difference schemes \cite{CDY} and, more recently, via particle approximations for one-dimensional models in \cite{francescoParticleApproximationOnedimensional2022}.

Research on the MFP problem has drawn extensively on optimal transport theory \cite{BB}. In particular, the MFP has been analyzed as a regularized mass-transport problem \cite{Tono2019}, investigated via optimal entropy-transport duality \cite{OrPoSa2018}, and extended to incorporate the effects of congestion \cite{MR3644590}.

Here, we use a potential formulation, originally developed in \cite{DRT2021Potential}, where the Poincar\'e lemma is applied to the continuity equation to reduce the MFP to a variational problem, eliminating the continuity equation from the system. Specifically, we consider a MFP with a rank-based non-local coupling, where the instantaneous cost depends on the agent's quantile, given by the cumulative distribution function (CDF) of the density $m$.

Non-local couplings appear widely in the MFG literature and typically have a smoothing effect \cite{cardaliaguetLongTimeAverage2013}; rank-based interactions, by contrast, introduce a less regular coupling. 
Rank-based interactions appear in stochastic models, including games of diffusion control \cite{ankirchnerMeanfieldRankingGames2024}, games with common noise \cite{bayraktarRankbasedMeanField2016}, terminal ranking models \cite{bayraktarTerminalRankingGames2020}, general mean-field competition \cite{nutzMeanFieldCompetition2017}, and principal-agent frameworks \cite{alasseurRankBasedRewardPrincipal2023}, and in games with rank and nearest-neighbor effects \cite{carmonaProbabilisticWeakFormulation2015}. 
To our knowledge, a PDE-based analysis for the deterministic first-order planning problem with rank interactions is not available in the literature.
Because of the limited regularity of the CDF coupling, we work with a weak formulation using monotone operator methods.

 The general form of a one-dimensional MFP with interactions through ranking is given by
\begin{equation*}
\begin{cases}
    &-u_{t} + H\left(x,u_{x}\right) = V\left(\int_{0}^{x} f\left(m\left(t, y\right)\right)\dy\right), \\
    &m_t - \left(mD_{p}H\left(x,u_{x}\right)\right)_{x} = 0, \\
    &m\left(0, x\right) = m_0\left(x\right),  \quad m\left(T, x\right) = m_T\left(x\right).
\end{cases}
\end{equation*}
We consider the one space dimension case with $x \in [0,1]$, set $f(m) = m$ and $V(r) = r$, and interpret the ranking term as a cumulative distribution function, leading to the system in Problem~\ref{main problem}.

\subsection*{A Motivating Model}
As motivation, we model a continuum of firms whose cost depends on their rank in the emissions distribution.
 The state of each agent $x(t) \in [0, 1]$ represents its normalized emission intensity at time $t$. Here, $x(t) = 0$ corresponds to a carbon-neutral state, while $x(t) = 1$ represents the maximum allowable emission intensity.

Each firm seeks to minimize a total cost composed of two competing factors: the operational cost of decarbonization and a market-based reputation cost determined by its ranking. Firms control the rate of change of their emissions $v(t)$, which determines the evolution of their state:
\begin{equation*}
    \dot{x}(t) = v(t), \quad x(0) = x_0.
\end{equation*}
Any adjustment to the emission level incurs costs. Reducing emissions ($v < 0$) typically requires investment in green technology, while increasing emissions ($v > 0$) may require additional emission certificates, carbon taxes, or regulatory fines. These costs are modeled by the Lagrangian $L(x,v)$, taken as the Legendre transform of the Hamiltonian $H$:
\begin{equation}\label{Legendre trans}
    L(x,v) = \sup_{p \in \mathbb{R}} \left(-vp - H(x,p)\right), \quad x \in [0,1].
\end{equation}
The competitive interaction is rank-based: the cost incurred by a firm depends on its relative standing in the population. Let $m(t,y)$ denote the density of agents at emission level $y$.
The rank of a firm at time $t$ and state $x$ is generally defined via a kernel $f$ as
\begin{equation*}
r(t, x) = \int_{0}^{x} f(m(t,y)) \dy.
\end{equation*}

Each firm seeks to select an admissible control $v(\cdot)$ that minimizes the cost functional
\begin{equation*}
    \int_{0}^{T}\left[L\left( x\left( s\right),v\left( s\right)\right) + V(r\left(s ,x\left(s\right)\right))\right]\ds + u\left(T,x\left( T\right)\right),
\end{equation*}
where $V(\cdot)$ represents the rank-dependent reputation cost.  Since $V$ is increasing, firms are driven to lower their rank (i.e., move toward $x=0$). In this work, we focus on the case of a linear ranking cost, setting $V(r) = r$ and $f(m)=m$.

In the MFP framework, a central planner (e.g., a government regulator) observes the initial distribution $m_0$ and prescribes a target distribution $m_T$ (e.g., a `Green Economy' compliance target). The terminal cost $u(T, \cdot)$ is not prescribed in advance but is instead part of the solution. It is the incentive policy required to ensure that the market equilibrium naturally evolves to $m_T$.

\subsection*{Problem Formulation}
We study the following ranking MFP system  on the domain $\Omega := (0,T) \times (0,1)$.
\begin{problem}[Ranking MFP Problem]\label{main problem}
    Let $H: [0,1] \times \mathbb{R} \to \mathbb{R}$ and $m_0, \, m_{T}:\left[0,1\right] \to \mathbb{R}^{+}$. We study the solutions $\left(u,m\right)$ of the PDE system
\begin{equation}\label{prob MFP}
\left\{
\begin{aligned}
    &-u_{t} + H\left(x,u_{x}\right) = \int_{0}^{x}m\left(t, y\right)\dy, \\
    &m_t - \left(mD_{p}H\left(x,u_{x}\right)\right)_{x} = 0,
\end{aligned}
\right. \qquad\text{in}\quad\Omega,
\end{equation}
    that satisfy $m \geq 0$ and the boundary conditions
\begin{equation}\label{boundary MFP}
\begin{cases}
     m(0, x) = m_{0}(x), & x \in [0, 1], \\
     m(T, x) = m_{T}(x), & x \in [0, 1], \\
     - m(t,x)D_{p}H(x,u_{x}) = 0, & \text{on } (0, T) \times \{0,1\}.
\end{cases}
\end{equation}
\end{problem}
The lateral boundary condition $-mD_{p}H = 0$ represents a zero-flux constraint, which ensures that the total mass of agents is conserved within the domain $[0,1]$.

\begin{remark}[On the Choice of Coupling]
The choice $f(m)=m$ and $V(r)=r$ is analytically significant because the resulting CDF coupling,
$m \mapsto \int_0^x m$, fails to be continuous as a map from probability measures equipped with the $W_1$ topology into $C([0,1])$ when limiting measures develop singular parts.
 This lack of smoothing prevents the application of standard fixed-point arguments. 
 The linear choice nevertheless yields a monotone operator, allowing a 
$BV$ existence theory.
\end{remark}

To analyze Problem~\ref{main problem}, we introduce an associated problem via a potential formulation. 
By applying the Poincar\'{e} lemma to the divergence-free field associated with the transport equation, we identify a potential $\varphi$ satisfying $\varphi_x=m$ (and, for classical solutions, $\varphi_t=mD_{p}H(x,u_x)$).
This allows us to rewrite the system as a single equation for $\varphi$, which we refer to as the Associated Problem. More precisely, the planning constraints are encoded by the boundary traces
$\varphi(0,\cdot)$ and $\varphi(T,\cdot)$.
Moreover, the zero-flux condition implies that $\varphi$ is constant in time on
$x=0,1$; together with the mass normalization and our reference potential
$\varphi^{0}$, this yields the Dirichlet boundary condition
$\varphi=\varphi^{0}$ on $\partial\Omega$ in the classical setting. The precise derivation is detailed in Section~\ref{sec:linking}. To define this problem formally, we first introduce the reference potential $\varphi^0$, which interpolates the boundary data,
\begin{equation}\label{defining phi^0}
    \varphi^{0}\left( t,x\right) = \frac{T-t}{T} \int_{0}^{x}m_{0}\left( y\right)\dy+ \frac{t}{T} \int_{0}^{x}m_{T}\left( y\right)\dy,
\end{equation}
and the perspective function $F$, defined as the partial Legendre transform
\begin{equation}\label{gen_pers_func}
F(x,j,m) = \sup_{p \in \mathbb{R}} \big(-jp - mH(x,p)\big).
\end{equation}
For a strictly positive density $m > 0$, $F$ admits the representation
\begin{equation}\label{non-sing pers func}
F(x,j,m) = L\left(x,\frac{j}{m}\right)m, \quad j \in \mathbb{R},
\end{equation}
where $L$ is the Legendre transform defined in \eqref{Legendre trans}.

Now, the Associated Problem for our Ranking MFP Problem is outlined as follows. 
\begin{problem}[Associated Problem]\label{associated problem}
    Consider the setting of Problem~\ref{main problem} and let $F$ and $\varphi^0$ be as above. We study the solutions $\varphi$ of the equation
    \begin{equation}\label{assoc prob}
            \left(F_{j}\left( x,-\varphi_{t},\varphi_{x}\right)\right)_{t} + \left(- F_{m}\left( x,-\varphi_{t},\varphi_{x}\right)\right)_{x} - \varphi_{x} = 0 \qquad \text{in} \quad \Omega,
    \end{equation}
    that satisfy $\varphi_x\geq 0$ and the boundary condition
    \begin{equation} \label{boundary assoc}
         \varphi = \varphi^{0} \quad \text{on} \quad \partial\Omega.
    \end{equation}
  \end{problem}
Having defined our two central problems, we now outline our main results and the structure of our analysis. Our work proceeds in two main parts.

In the first part, we begin with establishing a formal equivalence between the classical solutions of these two problems via
\begin{equation*}
            \left\{
            \begin{aligned}
                 m(t,x) &= \varphi_{x}(t,x), \\[2ex]
                 u(t,x) &= -\int_{0}^{x} F_{j}\Bigl(y,-\varphi_{t}(t,y),\varphi_{x}(t,y)\Bigr)\dy \\
                        &\qquad -\int_{0}^{t} F_{m}\Bigl(0,-\varphi_{t}(s,0),\varphi_{x}(s,0)\Bigr)\ds.
            \end{aligned}
            \right.
\end{equation*}
Normalizing $u(0,0)=0$ determines the time-integral term in the reconstruction formula. Then, we prove the uniqueness of classical solutions to Problem~\ref{associated problem}. Under the equivalence in Lemma~\ref{Linking Lemma}, this uniqueness result for the associated problem transfers to the ranking MFP (modulo an additive constant in $u$). Theorem~\ref{uniq. assoc. prob.} and Corollary~\ref{uniq. main prob.} below express these results.
%Second, we leverage this link to prove the uniqueness of classical solutions for both systems. Theorem~\ref{uniq. assoc. prob.} establishes that classical solutions to Problem~\ref{associated problem} are unique.
\begin{theorem}[Uniqueness]\label{uniq. assoc. prob.}
    Suppose Assumptions~\ref{assump:H} and~\ref{assump:positivity} hold. Then Problem~\ref{associated problem} admits at most one classical solution $\varphi$ with $\varphi_{x} > 0$.
\end{theorem}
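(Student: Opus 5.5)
Let $\varphi^1,\varphi^2$ be two classical solutions of Problem~\ref{associated problem} with $\varphi^i_x>0$. We argue by monotonicity: test the difference of the equations against $w:=\varphi^1-\varphi^2$. Since both satisfy \eqref{boundary assoc}, $w=0$ on $\partial\Omega$. Write $A(x,\nabla\varphi):=\bigl(F_j(x,-\varphi_t,\varphi_x),\,-F_m(x,-\varphi_t,\varphi_x)\bigr)$. Then \eqref{assoc prob} reads $\divergence_{(t,x)} A(x,\nabla\varphi)-\varphi_x=0$. Multiplying the difference of the two equations by $w$ and integrating by parts over $\Omega$ (the boundary terms vanish because $w=0$ on $\partial\Omega$) gives
\begin{equation*}
\int_\Omega \bigl(A(x,\nabla\varphi^1)-A(x,\nabla\varphi^2)\bigr)\cdot\nabla w\,\dx\dt+\int_\Omega w_x\,w\,\dx\dt=0 .
\end{equation*}
The second integral equals $\tfrac12\int_0^T [w^2]_{x=0}^{x=1}\dt=0$. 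So the ranking term is a skew (transport-like) operator and contributes nothing. This is exactly why the linear choice $f(m)=m$, $V(r)=r$ is harmless.

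For the first integral, note $A\cdot\nabla w = F_j(\cdot,-\varphi^1_t,\varphi^1_x)(-(-w_t))\cdots$. More precisely, with $(j,m)=(-\varphi_t,\varphi_x)$, we have
\begin{equation*}
\bigl(A(\nabla\varphi^1)-A(\nabla\varphi^2)\bigr)\cdot\nabla w=-\bigl(\nabla_{(j,m)}F(x,j^1,m^1)-\nabla_{(j,m)}F(x,j^2,m^2)\bigr)\cdot\bigl((j^1,m^1)-(j^2,m^2)\bigr),
\end{equation*}
since $w_t=-(j^1-j^2)$ and $w_x=m^1-m^2$. This sign bookkeeping must be checked carefully. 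If the sign comes out the other way, one instead uses $-A$ throughout; the structure is the same. The key step is then that the perspective function $F(x,\cdot,\cdot)$ is jointly convex on $\Rr\times(0,\infty)$, being a supremum of affine functions in $(j,m)$ by \eqref{gen_pers_func}. Its gradient is therefore monotone, and the integrand has a fixed sign. We conclude that the integrand vanishes pointwise.

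The main obstacle is upgrading this to equality of gradients, because $F$ is positively $1$-homogeneous in $(j,m)$ and hence never strictly convex along rays. The monotonicity gap vanishes iff $(j^1,m^1)$ and $(j^2,m^2)$ lie on a common ray where $F$ is affine. To handle this, I will use the strict convexity in Assumption~\ref{assump:H}, which presumably gives strict convexity of $L$ in $v$. Since $F = mL(x,j/m)$ with $m>0$ by Assumption~\ref{assump:positivity} and $\varphi^i_x>0$, the gap vanishes iff $j^1/m^1=j^2/m^2$. Indeed, the Bregman-type expression reduces to $m^1\,[\ldots]$, a strictly convex Bregman divergence of $L$ in $v$. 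This yields $\varphi^1_t/\varphi^1_x=\varphi^2_t/\varphi^2_x=:-v$ in $\Omega$.

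Hence both $\varphi^i$ solve the same linear transport equation $\varphi_t+v\,\varphi_x=0$. Equivalently, they are constant along the common characteristics $\dot x=v$. It remains to conclude $w\equiv0$. Here I would subtract: $w_t+v\,w_x=0$ with $w=0$ on $\partial\Omega$, where $v$ is continuous (indeed $C^1$ for classical solutions). Every characteristic through an interior point, followed forward or backward in time, reaches $\partial\Omega$ (either $t=0$, $t=T$, or $x\in\{0,1\}$), where $w=0$. So $w\equiv0$, which proves uniqueness.
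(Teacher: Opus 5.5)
Your proof is correct, and its first half matches the paper. You test the difference of the equations against $w$, note that $\int_\Omega w_x w$ vanishes, reduce the monotonicity gap to Bregman divergences of $L$, and use strict convexity to get $v^1=v^2$. The gap is $m^1[\cdots]+m^2[\cdots]$, not only the $m^1$ term. Your sign computation is right as written, and the sign is irrelevant anyway, since a fixed-sign integrand with zero integral vanishes.

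The two routes diverge only in the last step. The paper uses \eqref{eq:pers-derivatives}: $F_j$ and $F_m$ depend on $(j,m)$ only through $x$ and $v=j/m$, so $v^1=v^2$ forces $F_j^1=F_j^2$ and $F_m^1=F_m^2$. Subtracting the two equations then leaves only the ranking term, which gives $\varphi^1_x=\varphi^2_x$ directly. Hence $\varphi^1_t=\varphi^2_t$, so $w$ is constant and vanishes by the boundary condition.

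Your characteristic argument for $w_t+vw_x=0$ is also valid. Here $v\in\mathcal{C}^1(\bar\Omega)$, because $\varphi^i\in\mathcal{C}^2(\bar\Omega)$ and $\varphi^i_x>0$ on the compact set $\bar\Omega$. In fact $v=0$ on $x\in\{0,1\}$, so the lateral boundaries are themselves characteristics. Interior characteristics therefore run from $t=0$ to $t=T$, where $w=0$.

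What each approach buys: the paper's step is purely algebraic and needs no ODE theory, but it uses the zeroth-order coupling term $\varphi_x$ essentially. Without that term, subtracting the equations would give $0=0$. Your argument uses the ranking term only through its vanishing contribution to the energy identity. It is the standard Lasry--Lions argument: same velocity field, then uniqueness for the transport equation. It would therefore also give uniqueness for the uncoupled planning problem, or for any coupling with a nonnegative contribution. The price is a characteristics argument, which needs some regularity of $v$; here that regularity is available.
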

%However, because of the way the problems are linked, we have the uniqueness of classical solutions to Problem~\ref{main problem} only up to an additive constant in $u$.
\begin{corollary}\label{uniq. main prob.}
    Suppose Assumptions~\ref{assump:H} and~\ref{assump:positivity} hold. Then Problem~\ref{main problem} admits at most one classical solution $(u, m)$ with $m > 0$, up to an additive constant in $u$.
\end{corollary}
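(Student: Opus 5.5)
The corollary will follow from Theorem~\ref{uniq. assoc. prob.} combined with the linking result, Lemma~\ref{Linking Lemma}. Let $(u^1,m^1)$ and $(u^2,m^2)$ be two classical solutions of Problem~\ref{main problem} with $m^i>0$.

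\emph{Step 1: build potentials.} For each $i$, I use the construction in Lemma~\ref{Linking Lemma} to produce a classical $\varphi^i$ with $\varphi^i_x=m^i$ and $\varphi^i_t=m^iD_pH(x,u^i_x)$. This $\varphi^i$ solves Problem~\ref{associated problem}.

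Two parts of this need checking.
\begin{itemize}
\item \emph{Boundary data.} The zero-flux condition makes $\varphi^i$ constant in $t$ on $x=0$ and $x=1$. With the normalization $\varphi^i(t,0)=0$, mass conservation gives $\varphi^i(t,1)=\int_0^1 m_0=\int_0^1 m_T$. The traces at $t=0$ and $t=T$ are $\int_0^x m_0$ and $\int_0^x m_T$. All of these agree with $\varphi^0$ on $\partial\Omega$.
\item \emph{The equation.} The identity $F_j(x,-\varphi_t,\varphi_x)=-u_x$ comes from the Legendre duality in \eqref{gen_pers_func}, since $j=-mD_pH(x,u_x)$ is attained at $p=u_x$. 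Also $F_m=-H(x,u_x)$ at the optimizer. Then $(F_j)_t-(F_m)_x-\varphi_x = -u_{xt}+H_x\text{-total}-m$ reduces to the $x$-derivative of the Hamilton--Jacobi equation, because $\int_0^x m$ differentiates to $m$. So \eqref{assoc prob} holds.
\end{itemize}
Since $\varphi^i_x=m^i>0$, Theorem~\ref{uniq. assoc. prob.} gives $\varphi^1=\varphi^2$, and therefore $m^1=\varphi^1_x=\varphi^2_x=m^2$.

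\emph{Step 2: compare the value functions.} By the reconstruction formula of Lemma~\ref{Linking Lemma}, $u^i_x=-F_j(x,-\varphi_t,\varphi_x)$ depends only on $\varphi$. Hence $u^1_x=u^2_x$. The Hamilton--Jacobi equation then gives
\[
u^1_t=H(x,u^1_x)-\int_0^x m^1 = H(x,u^2_x)-\int_0^x m^2 = u^2_t .
\]
So $\nabla(u^1-u^2)=0$ on the connected set $\Omega$, and $u^1-u^2$ is constant.

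\emph{Main obstacle.} The delicate point is the Legendre-duality identification $F_j=-u_x$ and $F_m=-H$. It requires $m>0$ and the strict convexity and smoothness of $H$ in Assumption~\ref{assump:H}, so that $F$ is differentiable at $(x,-\varphi_t,\varphi_x)$ and $p\mapsto D_pH(x,p)$ is invertible. I expect this to be supplied by Lemma~\ref{Linking Lemma} together with Assumption~\ref{assump:positivity}. I will invoke that lemma rather than rederive it, and check only that its normalization conventions match $\varphi^0$.
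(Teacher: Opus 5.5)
Your proposal is correct and follows the paper's route: Lemma~\ref{Linking Lemma}(2) produces potentials $\varphi^i$ solving Problem~\ref{associated problem} with $\varphi^i=\varphi^0$ on $\partial\Omega$, and Theorem~\ref{uniq. assoc. prob.} then forces $\varphi^1=\varphi^2$, hence $m^1=m^2$. The only difference is cosmetic. The paper normalizes $u^i(0,0)=0$, which that lemma's hypothesis requires and which is harmless since $(u-u(0,0),m)$ is again a solution, and reads $u^i-u^i(0,0)$ directly off the reconstruction formula \eqref{solutions}; you instead compare $u^i_x=-F_j$ and $u^i_t$ via the Hamilton--Jacobi equation, which is equivalent because the time-integral term in \eqref{solutions} was itself obtained from that equation at $x=0$.
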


In the second part, our main result is Theorem~\ref{Existence theorem} (existence of 
$BV$ weak solutions), proved via monotone operator theory and a 
$q$-Laplacian regularization of Problem~\ref{associated problem} followed by Minty’s method. We define weak solutions as follows.

\begin{definition}[Weak Solution]\label{def:weak_solution}
A function $\varphi \in BV(\Omega)$ is a weak solution to Problem~\ref{associated problem} 
if $D_x\varphi \geq 0$ in the sense of Radon measures, $\varphi(0, \cdot) = \varphi^0(0, \cdot)$, $\varphi(T, \cdot) = \varphi^0(T, \cdot)$, $\varphi(\cdot,0)\geq \varphi^0(\cdot,0) = 0$, and $\varphi(\cdot,1)\leq \varphi^0(\cdot,1) = 1$ in the sense of traces, and for every test function $\eta \in \mathcal{C}^1(\bar{\Omega})$ satisfying $\eta_x > 0$ and $\eta = \varphi^0$ on $\partial\Omega$, the following variational inequality holds
 \begin{multline*}
\int_{\Omega} \left[ -F_j(x,-\eta_t, \eta_x)\eta_t + F_m(x,-\eta_t, \eta_x)\eta_x + \eta\eta_x \right] \dd \Omega \\
- \biggl[\int_{\Omega} -F_j(x,-\eta_t, \eta_x) D_t\varphi + \int_{\Omega} F_m(x,-\eta_t, \eta_x) D_x\varphi + \int_{\Omega} \eta D_x\varphi \\
+ L(0,0)\int_0^T \varphi(t,0)\dt + \bigl(L(1,0)+1\bigr)\int_0^T (1-\varphi(t,1))\dt \biggr] \geq 0,
\end{multline*}
where $D_t\varphi$ and $D_x\varphi$ are the Radon measures that represent the distributional derivatives of~$\varphi$.
\end{definition}
We recall basic facts on $BV(\Omega)$ in Subsection~\ref{subsec:BV_pre}.
 \begin{remark}[Boundary conditions and Minty formulation]
Definition~\ref{def:weak_solution} formulates a weak solution in the sense of Minty, where the test function $\eta$ appears within the operator.
While the classical problem imposes the pointwise Dirichlet condition $\varphi=\varphi^0$ on $\partial\Omega$, the $BV$ formulation requires a relaxed approach.
On the lateral boundary $(0,T)\times\{0,1\}$ we do not impose pointwise equality; instead, the spatial boundary data is encoded via the one-sided trace constraints in Definition~\ref{def:weak_solution} together with the boundary terms in the variational inequality.
Physically, these inequalities allow a loss of mass in the open interval $(0,1)$, which corresponds to concentrations at $x=0$ and $x=1$, forming Dirac masses. Therefore, the formulation naturally captures boundary congestion.
In contrast, the initial and terminal traces at $t=0$ and $t=T$ are imposed as equalities in the sense of traces.

Moreover, the boundary terms in the variational inequality weight the boundary concentrations
$\varphi(t,0)$ and $\left(1-\varphi(t,1)\right)$ by the running cost at zero velocity, $L(x,0)$ (and the additional ranking contribution at $x=1$).
Indeed, since $\eta=\varphi^0$ on $\partial\Omega$, we have $\eta_t=\varphi_t^0$ on $(0,T)\times\{0,1\}$; and because $\varphi^0(t,0)=0$ and $\varphi^0(t,1)=1$ for all $t$, it follows that $\eta_t(t,0)=\eta_t(t,1)=0$.
Using $F_m(x,0,m)=L(x,0)$ for $m>0$, we obtain
\[
F_m(0,-\eta_t(t,0),\eta_x(t,0))=L(0,0),
\qquad
F_m(1,-\eta_t(t,1),\eta_x(t,1))=L(1,0).
\]
\end{remark}
\begin{theorem}[Existence of Weak Solutions]\label{Existence theorem}
Suppose Assumptions~\ref{assump:H},~\ref{assump:positivity},~\ref{assump:L-growth},~\ref{assump:low_bnd_L} hold. Then there exists a weak solution $\varphi$ to Problem~\ref{associated problem} in the sense of Definition~\ref{def:weak_solution}.
\end{theorem}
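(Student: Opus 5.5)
The plan is to obtain $\varphi$ as a limit of solutions of regularized problems that are strictly monotone and coercive in $W^{1,q}$, and then pass to the limit in a Minty-type inequality, which only needs weak-$*$ convergence in $BV$. Set $\psi=\varphi-\varphi^0$, so that $\psi$ has zero trace on $\partial\Omega$. Denote by $A$ the operator of \eqref{assoc prob}, viewed in weak form:
\[
\langle A(\varphi),\eta\rangle=\int_\Omega \bigl[-F_j(x,-\varphi_t,\varphi_x)\eta_t+F_m(x,-\varphi_t,\varphi_x)\eta_x+\varphi_x\eta\bigr].
\]
Since $\int_\Omega \varphi_x\eta$ contributes the quadratic form $\int_\Omega(\varphi-\eta)_x(\varphi-\eta)$, and this vanishes for zero Dirichlet data, the monotonicity identified for Theorem~\ref{uniq. assoc. prob.} shows that $A$ is monotone on the convex set $\{\varphi_x\geq 0\}$. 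The source of this monotonicity is the joint convexity of the perspective function $F$ in $(j,m)$.

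For $\epsilon>0$ and some $q>2$, chosen larger than the growth exponent from Assumption~\ref{assump:L-growth}, consider
\[
A_\epsilon(\varphi)=A(\varphi)-\epsilon\,\divergence\bigl(|D\varphi|^{q-2}D\varphi\bigr)
\]
on the closed convex set
\[
K=\{\varphi\in\varphi^0+W^{1,q}_0(\Omega):\varphi_x\geq 0\}.
\]
On the set $\{m=0\}$ we extend $F$ by its lower semicontinuous (perspective) value, which is $+\infty$ unless $j=0$. Equivalently, one can work with the convex functional whose subdifferential gives $A_\epsilon$, namely
\[
\int_\Omega F(x,-\varphi_t,\varphi_x)+\tfrac12\int_\Omega \varphi_x\varphi\,\ldots
\]
This functional is not quite variational because of the term $\varphi_x$. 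I therefore plan to use the Browder--Minty/Hartman--Stampacchia theorem for pseudomonotone operators on $K$. The regularized operator is monotone plus the strictly monotone $q$-Laplacian, and it is coercive thanks to the $\epsilon|D\varphi|^q$ term. This yields a variational inequality solution $\varphi_\epsilon\in K$ with
\[
\langle A_\epsilon(\varphi_\epsilon),\eta-\varphi_\epsilon\rangle\geq 0 \quad\text{for all }\eta\in K.
\]
By Minty's lemma this is equivalent to
\[
\langle A_\epsilon(\eta),\eta-\varphi_\epsilon\rangle\geq 0 .
\]
Lower semicontinuity and domain issues, such as $F=+\infty$, I would handle by truncating $H$, or by first restricting to $\eta$ with $\eta_x\geq\delta>0$.

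The next step is the uniform $BV$ bound. Test the inequality with $\eta=\varphi^0$, which is admissible under Assumption~\ref{assump:positivity} because $\varphi^0_x>0$. Then use the superlinear growth of $L$ (Assumption~\ref{assump:L-growth}), which gives $F(x,j,m)\geq c|j|-Cm$, together with the lower bound from Assumption~\ref{assump:low_bnd_L}. The aim is the estimate
\[
\int_\Omega |\partial_t\varphi_\epsilon|+\int_\Omega\partial_x\varphi_\epsilon+\epsilon\int_\Omega|D\varphi_\epsilon|^q\leq C .
\]
Since $\partial_x\varphi_\epsilon\geq 0$, the $x$-variation is bounded by $\sup\varphi-\inf\varphi$. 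I expect the convexity inequality $F(\varphi^0)-F(\varphi_\epsilon)\geq\nabla F(\varphi^0)\cdot(\ldots)$ together with the $L^\infty$ bound $0\leq\varphi_\epsilon\leq 1$ (from monotonicity in $x$ and the boundary data) to close this estimate. Compactness in $BV$ then gives a subsequence with $\varphi_\epsilon\to\varphi$ in $L^1$ and $D\varphi_\epsilon\cws D\varphi$. The constraint $D_x\varphi\geq 0$ passes to the limit.

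Now fix a smooth $\eta$ with $\eta_x>0$ and $\eta=\varphi^0$ on $\partial\Omega$, and pass to the limit in the Minty inequality. The $q$-Laplacian term contributes $\epsilon\int|D\eta|^{q-2}D\eta\cdot D(\eta-\varphi_\epsilon)$, which tends to $0$ because $\epsilon^{1/q}D\varphi_\epsilon$ is bounded in $L^q$. The remaining terms have continuous coefficients $F_j(\eta),F_m(\eta),\eta$ tested against $D\varphi_\epsilon$.

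The main obstacle is that weak-$*$ convergence in $\Omega$ loses mass to the boundary. Measures $D\varphi_\epsilon$ converge weak-$*$ only in $\bar\Omega$, and the traces of $\varphi$ on $x=0,1$ need not equal $\varphi^0$. I would handle this by extending $\varphi_\epsilon$ by $\varphi^0$ outside $\Omega$, or equivalently by writing $\int_\Omega D\varphi_\epsilon$ on $\bar\Omega$. The limit measure then splits into $D\varphi\llcorner\Omega$ plus jump parts on the lateral boundary, namely $\varphi(t,0)\,\mathcal{H}^1$ at $x=0$ and $(1-\varphi(t,1))\,\mathcal{H}^1$ at $x=1$. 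On these jump parts the coefficient $F_m(x,-\eta_t,\eta_x)+\eta$ equals $L(0,0)$ at $x=0$ and $L(1,0)+1$ at $x=1$, since $\eta_t=0$ there; this produces exactly the boundary terms in Definition~\ref{def:weak_solution}. Positivity of these jumps gives the trace inequalities $\varphi(\cdot,0)\geq 0$ and $\varphi(\cdot,1)\leq 1$.

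On $t=0,T$ the $t$-derivative terms could concentrate as well. To rule this out I would use the uniform bound on $|D_t\varphi_\epsilon|$ near $t=0,T$. That bound should come from the coercivity of $F$ in $j$ combined with the barrier $\varphi^0$, giving equi-integrability of time traces and hence equality of the traces there. This last point is the one I expect to need the most care.
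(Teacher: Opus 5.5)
Your architecture is the paper's: homogenize by $\varphi^0$, regularize with a $q$-Laplacian on a constrained convex set, apply the Kinderlehrer--Stampacchia theorem, get the energy bound by testing with $\varphi^0$, extract a $BV$ limit, and pass to the limit in the Minty form, where lateral mass loss becomes the trace terms. Your extension device is equivalent to Lemma~\ref{lem:limit_bv}. However, the two points you leave open are exactly where the paper does the work.

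\textbf{First gap: the regularized operator is not defined on $K$.} Its coefficients $F_j=D_vL(x,-\varphi_t/\varphi_x)$ and $F_m=L-vD_vL$ need not lie in $L^{q'}$ for general $\varphi\in K=\{\varphi_x\ge 0\}$, and they are meaningless where $\varphi_x=0\neq\varphi_t$. Extending $F$ by $+\infty$ does nothing for its derivatives, so no existence theorem applies. The paper evaluates $\tilde F_j,\tilde F_m$ at the shifted mass $w_x+\varphi^0_x+\varepsilon$. Then $|v|\le\varepsilon^{-1}|w_t+\varphi^0_t|$, and Assumption~\ref{assump:L-growth} gives $|F_j|^{q'}\lesssim 1+|w_t+\varphi^0_t|^{(l-1)q'}$ and $|F_m|^{q'}\lesssim 1+|w_t+\varphi^0_t|^{lq'}$. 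This requires $q\ge l+1$, not merely $q>l$.

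Your option of restricting to $\eta_x\ge\delta$ also works, provided it is imposed on the constraint set. Solve on $K_\delta=\{\varphi\in\varphi^0+W^{1,q}_0(\Omega):\varphi_x\ge\delta\}$ with $\delta<\min_{[0,1]}\min\{m_0,m_T\}$. Then $\varphi^0\in K_\delta$, and each admissible $\eta$ in Definition~\ref{def:weak_solution} has $\eta_x\ge\delta_0(\eta)>0$ on $\bar\Omega$, so it lies in $K_\delta$ once $\delta$ is small.

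In the energy estimate, the convexity inequality must carry the gradient at the approximate solution: $F(\varphi^0)-F(\varphi_\epsilon)\ge\nabla F(\varphi_\epsilon)\cdot(\cdots)$. This is what the variational inequality itself (not its Minty form) tested with $\varphi^0$ provides. With $\nabla F(\varphi^0)$, as you wrote it, the inequality goes the wrong way. Also, the ranking term in your $A$ should read $-\varphi_x\eta$, equivalently $+\varphi\eta_x$; your boundary coefficient $F_m+\eta$ already uses the correct sign.

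\textbf{Second gap: the temporal traces.} This step needs a specific mechanism that your sketch does not supply. A bound on $|D_t\varphi_\epsilon|(\Omega)$, which is all a linear lower bound $F\ge c|j|-Cm$ yields, does not prevent concentration on $\{t=T\}$. Nor does the energy bound make $\partial_t\varphi_\epsilon$ equi-integrable on arbitrary sets, since $\partial_x\varphi_\epsilon$ can concentrate in $x$. The key fact is that the mass in every time slice is fixed: $\int_0^1\partial_x\varphi_\epsilon(t,x)\dx=1$ for a.e.\ $t$, because $\varphi_\epsilon(t,0)=0$ and $\varphi_\epsilon(t,1)=1$. Hölder's inequality with the $\kappa$-energy of Assumption~\ref{assump:low_bnd_L} then gives, uniformly in $\epsilon$,
\[
\int_{T-\delta}^{T}\int_0^1|\partial_t\varphi_\epsilon|\dx\dt\le\Bigl(\int_\Omega\frac{|\partial_t\varphi_\epsilon|^\kappa}{(\partial_x\varphi_\epsilon)^{\kappa-1}}\dx\dt\Bigr)^{1/\kappa}\delta^{(\kappa-1)/\kappa}\le C\delta^{(\kappa-1)/\kappa}.
\]
This bound survives in the limit by lower semicontinuity of the total variation. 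Now compare the average $\frac1\delta\int_{T-\delta}^T\int_0^x\varphi\dy\dt$ with two quantities. From the Sobolev approximants, followed by the $L^1$ limit, it is within $C\delta^{(\kappa-1)/\kappa}$ of $\int_0^x\varphi^0(T,y)\dy$. From the $BV$ limit directly, it is within the same amount of $\int_0^x\varphi(T,y)\dy$. Letting $\delta\to0$ gives $\varphi(T,\cdot)=\varphi^0(T,\cdot)$, and likewise at $t=0$.

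This argument uses $\kappa>1$ essentially. Without it, the Minty limit produces extra boundary terms of the form $\int_0^1F_j(x,-\eta_t,\eta_x)(\varphi-\varphi^0)\dx$ at $t=0,T$. The trace equalities required in Definition~\ref{def:weak_solution} would then not be established.
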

To prove Theorem~\ref{Existence theorem}, we use a framework based on monotone operators in reflexive Banach spaces.
In Hilbert spaces, this approach has been applied to prove the existence of weak solutions for both time-dependent \cite{FeGoTa21} and stationary MFGs \cite{FG2, FGT1}. Recently, it was unified in a Banach-space framework for stationary MFGs, including models with congestion and minimal growth Hamiltonians \cite{ferreiraSolvingMeanFieldGames2025}. In the present work, we use a
$q$-Laplacian regularization of the operator associated with Problem~\ref{associated problem}, as given in~\eqref{reg. operator}. Using the Abstract Existence Theorem for monotone coercive operators, Theorem~\ref{abstract existence theorem}, we show the existence of solutions to the regularized problem. Finally, we apply Minty's method to pass to the limit, yielding a weak solution in $BV(\Omega)$, see Section~\ref{existence}.

\section{Classical Solutions}
In this section, we consider the case where solutions are sufficiently smooth. We prove the equivalence between the Ranking MFP and the Associated Problem and deduce uniqueness of classical solutions.
\subsection{Assumptions and Preliminary Identities}\label{assumptionsI}
We state the structural assumptions on the Hamiltonian and the boundary data which are used throughout the paper; additional growth hypotheses required for the existence theory are introduced in subsection~\ref{pre and assump}.
\begin{hyp}[Hamiltonian Properties]\label{assump:H}
    The Hamiltonian $H$ satisfies the following regularity conditions:
    \begin{equation*}
        H \in \mathcal{C}^1([0,1]\times \mathbb{R}) \quad \text{and} \quad D_{p}H \in \mathcal{C}^1([0,1]\times \mathbb{R}).
    \end{equation*}
    Furthermore, $H$ is coercive and strictly convex in $p$. That is, for all $x \in [0,1]$,
    \begin{equation*}
        \lim_{|p|\to\infty} \frac{H(x,p)}{|p|} = +\infty \quad \text{and} \quad D_{pp} H(x,p) > 0.
    \end{equation*}
\end{hyp}
For brevity, we restrict our analysis to the autonomous case $H(x,p)$; 
however, the arguments extend to time-dependent Hamiltonians $H(t,x,p)$ satisfying Assumption~\ref{assump:H} uniformly.
\begin{remark}[Properties of $L$ and $F$]\label{rem:Lagrangian}
    The Lagrangian $L(x,v)$ is defined via the Legendre transform~\eqref{Legendre trans}. Under Assumption~\ref{assump:H}, standard convex analysis implies that $L$ is strictly convex in $v$ and inherits the regularity of $H$, specifically:
    \begin{equation*}
        L \in \mathcal{C}^1([0,1]\times \mathbb{R}) \quad \text{and} \quad D_{v}L \in \mathcal{C}^1([0,1]\times \mathbb{R}).
    \end{equation*}
    We will use the following identities:
    \begin{enumerate}
        \item \textbf{Legendre Transform Identities:} At points where the Legendre duality holds (i.e., $D_{p} H(x,p) = - v$ and $D_{v} L(x, v) = -p$), we have
        \begin{equation}\label{compos D_{v}L and D_{p}H}
        \begin{aligned}
             D_{p} H\left(x , -D_{v} L\left(x, v\right)\right) &= - v, \\
             H\left(x , -D_{v} L\left(x, v\right)\right) &= vD_{v}L(x,v)-L(x,v).
        \end{aligned}
        \end{equation}
        \item \textbf{Perspective Function Derivatives:} For $F$ given in~\eqref{non-sing pers func}, the derivatives are
        \begin{equation}\label{eq:pers-derivatives}
        \begin{aligned}
            F_{j} &= D_{v}L\left( x,\frac{j}{m}\right), \\
            F_{m} &= -D_{v}L\left( x,\frac{j}{m}\right) \cdot \left(\frac{j}{m}\right) + L\left( x,\frac{j}{m}\right).
        \end{aligned}
        \end{equation}
    \end{enumerate}
    Consequently, $F$, $F_j$, and $F_m$ are in $\mathcal{C}^1([0,1]\times\mathbb{R}\times\mathbb{R}^+)$ because of~\eqref{non-sing pers func} and~\eqref{eq:pers-derivatives}. Furthermore, recalling the definition in~\eqref{gen_pers_func}, we observe that $F(x,\cdot,\cdot)$ is the supremum of a family of linear functions; therefore, it is jointly convex for any fixed $x\in[0,1]$.
\end{remark}

\begin{lemma}\label{lem:pers-mu}
    Suppose Assumption~\ref{assump:H} holds. For any $x\in[0,1]$ and any $(j,m), (j_0,m_0)\in\mathbb{R}\times\mathbb{R}^+$, we have
    \[F_j(x,j,m)j_0 + F_m(x,j,m)m_0 \leq F(x,j_0,m_0).\]
\end{lemma}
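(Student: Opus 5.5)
The inequality is the supporting-hyperplane inequality for a convex function that is positively homogeneous of degree one. The plan has two steps: establish the Euler identity $F_j j + F_m m = F$, then add the gradient inequality from joint convexity.

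\emph{Step 1: homogeneity.} Fix $x\in[0,1]$. From the definition \eqref{gen_pers_func}, for $\lambda>0$ we have
\[
F(x,\lambda j,\lambda m)=\sup_{p}\bigl(-\lambda jp-\lambda mH(x,p)\bigr)=\lambda F(x,j,m),
\]
so $F(x,\cdot,\cdot)$ is positively $1$-homogeneous on $\mathbb{R}\times\mathbb{R}^+$. The same follows directly from the representation \eqref{non-sing pers func}. By Remark~\ref{rem:Lagrangian}, $F$ is $\mathcal{C}^1$ on $\mathbb{R}\times\mathbb{R}^+$. Differentiating $\lambda\mapsto F(x,\lambda j,\lambda m)$ at $\lambda=1$ then gives Euler's identity
\[
F_j(x,j,m)\,j+F_m(x,j,m)\,m=F(x,j,m).
\]
As a cross-check, the same identity follows from \eqref{eq:pers-derivatives}: with $v=j/m$ one gets $D_vL\,v\,m+(-D_vL\,v+L)m=Lm$.

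\emph{Step 2: convexity.} By Remark~\ref{rem:Lagrangian}, $F(x,\cdot,\cdot)$ is jointly convex. It is also differentiable at $(j,m)$ with $m>0$. The first-order convexity inequality therefore holds at any $(j_0,m_0)$ with $m_0>0$:
\[
F(x,j_0,m_0)\ge F(x,j,m)+F_j(x,j,m)(j_0-j)+F_m(x,j,m)(m_0-m).
\]
Substituting the Euler identity from Step 1, the constant terms cancel, since $F(x,j,m)-F_j j-F_m m=0$. What remains is exactly the claimed inequality $F_j(x,j,m)j_0+F_m(x,j,m)m_0\le F(x,j_0,m_0)$.

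\emph{Main obstacle.} The only delicate point is justifying joint convexity and differentiability on the open half-space $m>0$, which is where the first-order inequality is applied. Remark~\ref{rem:Lagrangian} already supplies both. If one prefers a direct argument, convexity of the perspective $mL(x,j/m)$ follows from convexity of $L(x,\cdot)$: for a convex combination of $(j_1,m_1)$ and $(j_2,m_2)$, the velocity $j/m$ is a convex combination of $j_1/m_1$ and $j_2/m_2$ with weights proportional to $m_i$. Since $\mathbb{R}^+$ here means $m_0>0$, no boundary case at $m_0=0$ needs to be handled.
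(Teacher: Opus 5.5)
Your proof is correct, but it is organized differently from the paper's.

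The paper never uses homogeneity or a two-variable gradient inequality. It substitutes the explicit formulas \eqref{eq:pers-derivatives} into the left-hand side and obtains $m_0\bigl(D_vL(x,v)(v_0-v)+L(x,v)\bigr)$, where $v=j/m$ and $v_0=j_0/m_0$. It then applies the one-variable tangent-line inequality for $L(x,\cdot)$ and multiplies back by $m_0>0$.

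You instead treat $F(x,\cdot,\cdot)$ structurally, as a differentiable, jointly convex, positively $1$-homogeneous function on the open half-plane $\mathbb{R}\times\mathbb{R}^+$. You combine Euler's identity $F_j j+F_m m=F$ with the first-order convexity inequality, the constant term cancels, and the claim follows.

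The paper's route is shorter and self-contained. It needs only convexity of $L$ in one variable and the derivative formulas. Yours needs joint convexity and $\mathcal{C}^1$ regularity of $F$. Both are supplied by Remark~\ref{rem:Lagrangian}, and your direct perspective argument correctly justifies the convexity independently.

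What your route buys is conceptual. It identifies the inequality as a general fact: the gradient of a convex, positively $1$-homogeneous function is a linear minorant that is exact along the ray through the base point. The paper's one-line computation is exactly this gradient inequality with Euler's identity already substituted, written in the velocity variable $v=j/m$.
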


\begin{proof}
    By~\eqref{eq:pers-derivatives}, we have
    \[F_j(x,j,m)j_0 + F_m(x,j,m)m_0 = m_0\left(D_{v}L\left(x,\frac{j}{m}\right)\left(\frac{j_0}{m_0}-\frac{j}{m}\right)+L\left(x,\frac{j}{m}\right)\right).\]
    By the convexity of $L(x,\cdot)$ in Remark~\ref{rem:Lagrangian}, we have
    \[D_{v}L\left(x,\frac{j}{m}\right)\left(\frac{j_0}{m_0}-\frac{j}{m}\right)+L\left(x,\frac{j}{m}\right) \leq L\left(x,\frac{j_0}{m_0}\right).\]
    Therefore,
    \[F_j(x,j,m)j_0 + F_m(x,j,m)m_0 \leq m_0L\left(x,\frac{j_0}{m_0}\right) = F(x,j_0,m_0).\qedhere\]
\end{proof}

Finally, we impose the following conditions on the boundary data.

\begin{hyp}[Positivity of Boundary Data]\label{assump:positivity}
    We assume the initial and terminal distributions $m_0, m_{T} \in \mathcal{C}^{1}\left(\left[0,1\right]\right)$ are strictly positive probability densities. That is, $m_0(x) > 0$, $m_T(x) > 0$ for all $x \in [0,1]$, and $\int_{0}^{1} m_{0}\left(x\right)\dx = 1$, $\int_{0}^{1} m_{T}\left(x\right)\dx =1$.
\end{hyp}

\subsection{The Linking Lemma}\label{sec:linking}
In this subsection, we prove Lemma~\ref{Linking Lemma}, establishing the equivalence between the Ranking MFP Problem and the Associated Problem for classical solutions.
\begin{lemma}[Linking Lemma]\label{Linking Lemma}
    Consider the setting of Problem~\ref{main problem} and suppose Assumptions~\ref{assump:H} and~\ref{assump:positivity} hold.
    \begin{enumerate}
        \item Let $\varphi \in \mathcal{C}^2(\bar{\Omega})$ be a classical solution to Problem~\ref{associated problem}, with $\varphi_{x} > 0$. Then, the pair $(u, m)$ defined by
        \begin{equation}\label{solutions}
            \left\{
            \begin{aligned}
                 m(t,x) &= \varphi_{x}(t,x), \\[2ex]
                 u(t,x) &= -\int_{0}^{x} F_{j}\Bigl(y,-\varphi_{t}(t,y),\varphi_{x}(t,y)\Bigr)\dy \\
                        &\qquad -\int_{0}^{t} F_{m}\Bigl(0,-\varphi_{t}(s,0),\varphi_{x}(s,0)\Bigr)\ds
            \end{aligned}
            \right.
        \end{equation}
        is a classical solution to Problem~\ref{main problem}.

        \item Conversely, let $(u,m) \in \mathcal{C}^2(\bar{\Omega}) \times \mathcal{C}^1(\bar{\Omega})$ be a classical solution to Problem~\ref{main problem} with $m>0$ and $u(0,0) = 0$.  Then there exists a potential function $\varphi$ satisfying~\eqref{solutions} that solves Problem~\ref{associated problem}.
    \end{enumerate}
\end{lemma}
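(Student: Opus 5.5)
Both directions are direct computations built on the identities \eqref{compos D_{v}L and D_{p}H} and \eqref{eq:pers-derivatives}. The key observation is that, with $m=\varphi_x>0$ and $j=-\varphi_t$, the velocity $v:=j/m=-\varphi_t/\varphi_x$ is well defined. By \eqref{eq:pers-derivatives}, $F_j=D_vL(x,v)$ and $F_m=L(x,v)-vD_vL(x,v)$. Setting $p:=-D_vL(x,v)$, the second identity in \eqref{compos D_{v}L and D_{p}H} gives $F_m=-H(x,p)$, and the first gives $D_pH(x,p)=-v$.

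\textbf{Part (1).} I would define $u$ by \eqref{solutions}. Then $u_x=-F_j(x,-\varphi_t,\varphi_x)=p$. Since $\varphi\in\mathcal C^2$ and $F_j\in\mathcal C^1$, differentiating under the integral is justified.

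For the Hamilton--Jacobi equation, I compute
\[
u_t=-\int_0^x (F_j)_t\dy-F_m\big|_{x=0}.
\]
Using \eqref{assoc prob} to replace $(F_j)_t$ by $(F_m)_y+\varphi_y$ and integrating in $y$, the boundary term at $y=0$ cancels. This leaves
\[
u_t=-F_m(x,\cdot)-\varphi(t,x)+\varphi(t,0).
\]
The boundary condition \eqref{boundary assoc} gives $\varphi(t,0)=\varphi^0(t,0)=0$. Hence $-u_t=F_m+\varphi=-H(x,u_x)+\int_0^x m\dy$, which is the first equation of \eqref{prob MFP}.

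For the continuity equation, $mD_pH(x,u_x)=\varphi_x\cdot(-v)=\varphi_t$. Therefore
\[
m_t-(mD_pH)_x=\varphi_{xt}-\varphi_{tx}=0
\]
by equality of mixed partials.

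The remaining conditions are checked as follows. Positivity $m>0$ holds by hypothesis. The initial and terminal conditions follow from $\varphi(0,x)=\int_0^x m_0$ and $\varphi(T,x)=\int_0^x m_T$ by differentiating in $x$. For zero flux, $-mD_pH=-\varphi_t$, and this vanishes on $x=0,1$ because $\varphi^0(t,0)=0$ and $\varphi^0(t,1)=1$ are constant in $t$ (this uses the mass normalization in Assumption~\ref{assump:positivity}).

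\textbf{Part (2).} Given $(u,m)$, I would define $\varphi(t,x):=\int_0^x m(t,y)\dy$, so that $\varphi_x=m$. Integrating the continuity equation in $x$ and using the zero-flux condition at $x=0$ gives $\varphi_t=mD_pH(x,u_x)$. At $x=1$, zero flux gives $\varphi_t(t,1)=0$, so $\varphi(t,1)=\int_0^1 m_0=1$. Together with $\varphi(t,0)=0$ and the initial and terminal traces, this yields $\varphi=\varphi^0$ on $\partial\Omega$.

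The velocity is $v=-\varphi_t/\varphi_x=-D_pH(x,u_x)$. By Legendre duality, which uses strict convexity so that $D_pH(x,\cdot)$ is invertible, we get $D_vL(x,v)=-u_x$. Hence
\[
F_j=-u_x,\qquad F_m=-H(x,u_x).
\]
Substituting into \eqref{assoc prob} gives
\[
(F_j)_t-(F_m)_x-\varphi_x=-u_{xt}+\big(H(x,u_x)\big)_x-m.
\]
This is exactly the $x$-derivative of the Hamilton--Jacobi equation, since $\partial_x\int_0^x m=m$, and so it vanishes.

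Finally, I would verify the reconstruction formula \eqref{solutions}. The first integral equals $u(t,x)-u(t,0)$. At $x=0$ the HJ equation reads $-u_t(t,0)+H(0,u_x(t,0))=0$, so $u(t,0)=\int_0^t H(0,u_x)\ds=-\int_0^t F_m|_{x=0}\ds$, using $u(0,0)=0$.

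\textbf{Main obstacle.} The computations are routine. The step needing care is regularity: I must confirm that $\varphi\in\mathcal C^2$ in part (2), so that \eqref{assoc prob} holds classically. This follows from $m\in\mathcal C^1$, $u\in\mathcal C^2$ and $D_pH\in\mathcal C^1$. I must also justify commuting derivatives in part (1), which relies on $F_j,F_m\in\mathcal C^1$ and needs $\varphi_x>0$ on $\bar\Omega$.
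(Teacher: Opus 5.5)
Your proposal is correct and follows essentially the same route as the paper. Both use the Legendre and perspective identities, substitute \eqref{assoc prob} into $u_t$ for part (1), and differentiate the Hamilton--Jacobi equation in $x$ for part (2). The only difference is cosmetic: you build $\varphi(t,x)=\int_0^x m(t,y)\dy$ explicitly, using zero flux at $x=0$ to get $\varphi_t=mD_pH(x,u_x)$, and this is precisely the potential the paper obtains from the Poincar\'e lemma with $\varphi(0,0)=0$.
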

\begin{proof}
First, we show the implication \textbf{Associated Problem $\implies$ Ranking MFP Problem.}
Differentiating $u$ in~\eqref{solutions} with respect to $x$ and using the identity for $F_j$ from~\eqref{eq:pers-derivatives}, we obtain
    \begin{equation}\label{eq:ux.from.phi}
        u_{x} = - F_{j}\left( x,-\varphi_{t},\varphi_{x}\right) = - D_{v}L\left(x, \frac{-\varphi_{t}}{\varphi_{x}}\right).
    \end{equation}
    Next, we substitute $u_x$ into the terms $mD_{p} H(x, u_x)$ and $H(x, u_x)$. By applying the Legendre transform identities~\eqref{compos D_{v}L and D_{p}H} with velocity $v = \frac{-\varphi_t}{\varphi_x}$, and recalling $m = \varphi_x$ from~\eqref{solutions}, we deduce
    \begin{equation}\label{eq:legendre_perspective_relations}
    \begin{aligned}
        mD_{p} H\left(x , u_{x}\right) &= \varphi_x D_{p} H\left(x , -D_{v} L\left(x, \frac{-\varphi_{t}}{\varphi_{x}}\right)\right) = \varphi_x \left(\frac{\varphi_t}{\varphi_x}\right) = \varphi_t, \\
        H\left(x, u_{x}\right) &= H\left(x , -D_{v} L\left(x, \frac{-\varphi_{t}}{\varphi_{x}}\right)\right) \\
        &= \left(\frac{-\varphi_{t}}{\varphi_{x}}\right) D_{v}L\left(x, \frac{-\varphi_{t}}{\varphi_{x}}\right) -L\left(x, \frac{-\varphi_{t}}{\varphi_{x}}\right) = - F_{m}\left( x,-\varphi_{t},\varphi_{x}\right).
    \end{aligned}
    \end{equation}
    Note that the final equality for $H(x, u_x)$ matches the identity for $-F_m$ given in~\eqref{eq:pers-derivatives}.

Next, using the first equation in~\eqref{solutions} and the first identity in~\eqref{eq:legendre_perspective_relations}, we obtain
    \begin{equation*}
       0 = \left(\varphi_{x}\right)_{t} - \left(\varphi_{t}\right)_{x} = m_{t} - \left(mD_{p} H\left(x , u_{x}\right)\right)_{x}.
    \end{equation*}
    That is, the transport equation~\eqref{prob MFP} holds for $(u,m)$.

Now, for the HJ-equation, observe that from~\eqref{solutions}
\begin{equation}\label{u_t in tilde{F}_j and tilde{F}_m}
    u_t(t,x) = \left(-\int_0^x \frac{\partial}{\partial t} F_j(y, -\varphi_t(t,y), \varphi_x(t,y)) \dy \right) - F_m(0, -\varphi_t(t,0), \varphi_x(t,0)).
\end{equation}

Using~\eqref{assoc prob}, we get
\begin{equation*}
 \frac{\partial}{\partial t} F_j(y, -\varphi_t(t,y), \varphi_x(t,y)) = \frac{\partial}{\partial y} F_m(y, -\varphi_t(t,y), \varphi_x(t,y)) + \varphi_x(t,y).
\end{equation*}
Substituting this into~\eqref{u_t in tilde{F}_j and tilde{F}_m}, using~\eqref{eq:legendre_perspective_relations} and~\eqref{solutions} yields
\begin{align*}
 u_t(t,x) &= -\int_0^x \frac{\partial}{\partial y} F_m(y, -\varphi_t(t,y), \varphi_x(t,y)) \dy \\
 & \qquad - \int_0^x \varphi_x(t,y) \dy - F_m(0, -\varphi_t(t,0), \varphi_x(t,0)) \\
    &= - F_m(x, -\varphi_t(t,x), \varphi_x(t,x)) - \int_0^x \varphi_x(t,y) \dy \\
    &= H(x, u_x) - \int_0^x m(t,y) \dy.
\end{align*}

It follows that $(u,m) \in \mathcal{C}^2(\bar{\Omega})\times \mathcal{C}^1(\bar{\Omega})$ solves Problem~\ref{main problem}.

To conclude the implication, we need to examine the boundary conditions.  
Assume that we have the boundary condition $\varphi = \varphi^{0}$ on $\partial\Omega$ as in~\eqref{boundary assoc}. This implies
       \begin{align*}
           \varphi\left(0,x\right) &= \varphi^{0}\left(0,x\right) = \int_{0}^{x}m_{0}\left( y\right)\dy, \\
           \varphi\left(T,x\right) &= \varphi^{0}\left(T,x\right)= \int_{0}^{x}m_{T}\left( y\right)\dy.
       \end{align*}
       Consequently,
       \begin{align*}
           m\left(0,x\right) &= \varphi_{x}\left(0,x\right) = m_{0}\left(x\right),\\
           m\left(T,x\right) &= \varphi_{x}\left(T,x\right) = m_{T}\left(x\right).
       \end{align*}
       Furthermore,
       \begin{align*}
           \varphi\left(t,0\right) &= \varphi^{0}\left(t,0\right) = 0, \\
           \varphi\left(t,1\right) &= \varphi^{0}\left(t,1\right) = 1.
       \end{align*}
       Hence, we have
       \begin{equation*}
           \varphi_{t}\left(t,0\right) = \varphi_{t}\left(t,1\right) = 0 \implies -mD_{p}H\left(x, u_{x}\right) = 0 \quad \text{on } (0, T) \times \{0, 1\}.
       \end{equation*}
   
Now we prove the opposite implication, \textbf{Ranking MFP Problem $\implies$ Associated Problem.} Suppose that $\left( u,m\right) \in \mathcal{C}^2(\bar{\Omega})\times \mathcal{C}^1(\bar{\Omega})$ solves Problem~\ref{main problem} and satisfies $m > 0$. We can rewrite the transport equation $m_t - \left( mD_{p}H\left( x,u_{x}\right)\right)_{x} = 0$ in the following form
\begin{equation*}
    \divergence_{\left( t,x\right)}\left( m, -mD_{p}H\left( x,u_{x}\right)\right) = 0.
\end{equation*}
Since $\Omega$ is simply connected, this satisfies the hypotheses of the Poincar\'{e} lemma. Therefore, there exists a unique function $\varphi: \Omega \rightarrow \mathbb{R}$ such that
\begin{equation}\label{potential function}
    \begin{cases}
        \varphi_{x} = m, \\
        \varphi_{t} = mD_{p}H\left( x,u_{x}\right),\\
        \varphi(0,0) = 0.
    \end{cases}
\end{equation}
 Since $\left( u,m\right) \in \mathcal{C}^2(\bar{\Omega})\times \mathcal{C}^1(\bar{\Omega})$, by the regularity of $H$ in Assumption~\ref{assump:H}, we have $\varphi \in \mathcal{C}^2(\bar{\Omega})$.

From~\eqref{potential function}, we have $\frac{-\varphi_{t}}{\varphi_{x}} = -D_{p}H(x, u_x)$. Using this in~\eqref{compos D_{v}L and D_{p}H} yields the relations in~\eqref{eq:ux.from.phi} and~\eqref{eq:legendre_perspective_relations}.
Differentiating the HJ-equation in~\eqref{prob MFP} with respect to $x$, and using~\eqref{eq:ux.from.phi} and~\eqref{eq:legendre_perspective_relations}, we obtain
\begin{equation}\label{persp_HJ}
    \left( F_{j}\left( x,-\varphi_{t},\varphi_{x}\right)\right)_{t} + \left(- F_{m}\left( x,-\varphi_{t},\varphi_{x}\right)\right)_{x} - \varphi_{x} = 0,
\end{equation}
which is~\eqref{assoc prob}.
Now, by the construction of $\varphi$, the first equation in~\eqref{solutions} is satisfied. For the second equation, integrating~\eqref{eq:ux.from.phi} with respect to $x$ yields
\begin{equation}\label{eq:u_from_phi}
    u(t,x) = -\int_{0}^{x} F_{j}\Bigl(y,-\varphi_{t}(t,y),\varphi_{x}(t,y)\Bigr)\dy + u\left( t, 0 \right).
\end{equation}
Evaluating the Hamilton--Jacobi equation in~\eqref{prob MFP} at $x =0$ using~\eqref{eq:legendre_perspective_relations}, and integrating the result with respect to $t$, we obtain
\begin{equation}\label{u(t,0)_from_phi}
    u\left(t, 0 \right) = -\int_{0}^{t} F_{m}\Bigl(0,-\varphi_{t}(s,0),\varphi_{x}(s,0)\Bigr)\ds.
\end{equation}
Combining~\eqref{eq:u_from_phi} and~\eqref{u(t,0)_from_phi} justifies the second equation in~\eqref{solutions}. Thus, $\varphi$ satisfies~\eqref{solutions}, as claimed.
Finally, suppose that
 $(u,m)$ satisfies the boundary and no-flux conditions in~\eqref{boundary MFP}. Then, by the definition of $\varphi$ in~\eqref{potential function}, we have
\begin{equation*}
    \varphi_{x} \left(0,x\right) = m_{0}\left(x\right) \quad \text{and} \quad \varphi_{x} \left(T,x\right) = m_{T}\left(x\right).
\end{equation*}
Furthermore, the no-flux condition, $-mD_{p}H = 0$, on the spatial boundaries implies
\begin{equation*}
    \varphi_{t} \left(t,0\right) = \varphi_{t} \left(t,1\right) = 0,
\end{equation*}
for all $t \in [0,T]$. This implies
\begin{equation*}
   \varphi\left(t,0\right) = \varphi\left(0,0\right) = 0  \quad \text{and} \quad \varphi\left(t,1\right) = \varphi\left(0,1\right).
\end{equation*}
Therefore, by the Fundamental Theorem of Calculus, we get
\begin{equation*}
     \varphi(0,1) = \varphi(0,0) +\int_{0}^{1} \varphi_{x} \left(0,x\right)\dx = \int_{0}^{1} m_{0} \left(x\right)\dx =1,
\end{equation*}
implying $\varphi(t,1) = 1$.
Combining these with the spatial derivatives, we see that
\begin{equation*}
    \varphi\left(0,x\right) = \int_{0}^{x} m_{0} \left(y\right)\dy \quad \text{and} \quad \varphi\left(T,x\right) = \int_{0}^{x} m_{T} \left(y\right)\dy.
 \end{equation*}
 Thus, $\varphi = \varphi^{0}$ on $\partial\Omega$ as desired.

\end{proof}

\subsection{Uniqueness of Classical Solutions}\label{uniqueness}
In this subsection, we prove that the Associated Problem admits at most one classical solution, implying that the Ranking MFP Problem admits at most one classical solution up to an additive constant in~$u$. The argument relies on the strict convexity of the Lagrangian, which is inherited from the Hamiltonian (see Remark~\ref{rem:Lagrangian}).
\begin{proof}[Proof of Theorem~\ref{uniq. assoc. prob.}]
Suppose that $\varphi^1$ and $\varphi^2$ are two distinct classical solutions to Problem~\ref{associated problem}.
Since both $\varphi^1$ and $\varphi^2$ are solutions to the PDE in Problem~\ref{associated problem}, they satisfy, omitting arguments for brevity,
\[ \left( F_{j}^i \right)_{t} - \left( F_{m}^i \right)_{x} -\varphi_{x}^i = 0, \]
where $F^i = F(x, -\varphi^i_t, \varphi^i_x)$. Subtracting the equation for $i=1$ from that for $i=2$, testing against $\varphi^2 - \varphi^1$, and integrating over $\Omega$ yields, after integration by parts,
\[
\int_\Omega \left[ -(F_j^2 - F_j^1)(\varphi_t^2 - \varphi_t^1) + (F_m^2 - F_m^1)(\varphi_x^2 - \varphi_x^1) - (\varphi_{x}^2-\varphi_{x}^1)(\varphi^2-\varphi^1) \right] \dd \Omega = 0.
\]
The third term vanishes upon integration by parts due to the boundary conditions. To simplify the notation, let $v^{i} = \frac{-\varphi^{i}_{t}}{\varphi^{i}_{x}}$ and $L^{i} = L(x, v^i)$. Therefore, by~\eqref{eq:pers-derivatives}, we have
\begin{align*}
    0 &= \int_\Omega \left[ -(F_j^2 - F_j^1)(\varphi_t^2 - \varphi_t^1) + (F_m^2 - F_m^1) (\varphi_x^2 - \varphi_x^1)\right] \dd \Omega \\
    &= \int_{\Omega} \Big[-\left(D_{v}L^{2} - D_{v}L^{1}\right)\left(\varphi_t^2 - \varphi_t^1\right)+ \left( -v^2D_{v}L^2 + L^2 + v^1D_{v}L^1 - L^1\right)\left(\varphi_x^2 - \varphi_x^1\right)\Big] \dd \Omega \\
    &= \int_{\Omega} \Big[ \varphi_{x}^{1} \left(L^1 - L^2 - D_{v}L^2(v^1-v^2)\right)+ \varphi_{x}^{2}\left( L^2 - L^1 - D_{v}L^1(v^2-v^1)\right) \Big] \dd \Omega.
\end{align*}
By the strict convexity of $L(x, \cdot)$, we have the inequality $L(a) > L(b) + D_{v}L(b) \cdot (a-b)$ for all $a\neq b$. This means that the two terms in the final integrand are both non-negative:
\begin{equation*}
\varphi_{x}^{1} \underbrace{\left( L^1 - L^2 - D_{v}L^2(v^1-v^2)\right)}_{\geq 0} + \varphi_{x}^{2}\underbrace{\left( L^2 - L^1 - D_{v}L^1(v^2-v^1)\right)}_{\geq 0}.
\end{equation*}
Since $\varphi^{i}_{x} > 0$ for classical solutions, both terms in the integrand must be identically zero. Due to the strict convexity of $L$, we must have
$$ v^1(t,x) = v^2(t,x) \quad \text{in} \quad \Omega. $$
Since $v^1 = v^2$, it follows immediately, 
by~\eqref{eq:pers-derivatives}, that $F_j^1 = F_j^2$ and $F_m^1 = F_m^2$.

Therefore, the PDE for the difference $\varphi^1 - \varphi^2$ simplifies to
\[(\varphi_x^1 - \varphi_x^2) = 0 \quad \text{i.e.,} \quad \varphi_x^1 = \varphi_x^2. \]
We now have that $\varphi_x^1 = \varphi_x^2$ and $v^1=v^2$, so $\varphi_t^1 = \varphi_t^2$.
Therefore, the gradients are identical, $\nabla \varphi^1 = \nabla \varphi^2$. This means $\varphi^1 - \varphi^2$ is a constant. As both $\varphi^1$ and $\varphi^2$ satisfy the same Dirichlet boundary condition $\varphi = \varphi^0$ on $\partial\Omega$, it follows that $\varphi^1 = \varphi^2$. This contradicts the assumption that the solutions were distinct. Hence, the classical solution is unique.
\end{proof}
As a corollary of Lemma~\ref{Linking Lemma} and Theorem~\ref{uniq. assoc. prob.}, we have the uniqueness for the original Ranking MFP system, Problem~\ref{main problem}.
\begin{proof}[Proof of Corollary~\ref{uniq. main prob.}]
     Let $(u^1, m^1)$ and $(u^2, m^2)$ be solutions to Problem~\ref{main problem} with $m^1, m^2 > 0$. For any solution $(u,m)$, we have that $(u-u(0,0), m)$ is also a solution. By Lemma~\ref{Linking Lemma} and Theorem~\ref{uniq. assoc. prob.}, the solution pairs $(u^1-u^1(0,0), m^1)$ and $(u^2-u^2(0,0), m^2)$ both correspond to the same potential $\varphi$. Therefore, $m^1 = m^2$ and $u^1-u^2 = u^1(0,0)-u^2(0,0)$ is a constant.
\end{proof}

\section{Weak Solutions}
This section develops the variational inequality framework associated with Problem~\ref{associated problem} and proves the existence of weak solutions in the sense of Definition~\ref{def:weak_solution}.

We impose $\varphi = \varphi^{0}$ on $\partial\Omega$. We set
\begin{equation}\label{homogenization}
    \psi = \varphi - \varphi^0,
\end{equation}
so that $\psi=0$ on $\partial\Omega$. For $0<\varepsilon<1$, we add a $q$-Laplacian-type regularization to obtain a coercive problem
with solution $\psi^\varepsilon$; we then pass to the limit as $\varepsilon\to0$ and show that
the limit yields a $BV$ weak solution of Problem~\ref{associated problem}
(in the sense of Definition~\ref{def:weak_solution}).

\subsection{Preliminaries and Assumptions}\label{pre and assump}
We recall the key notions from monotone operator theory and functions of bounded variation used in the sequel and present the minimal set of assumptions for our analysis.

\subsubsection{Monotone Operator Theory}\label{preliminaries}
Let $X$ be a reflexive Banach space with dual $X'$ and let $\mathbb{K} \subset X$ be a non-empty convex set. We denote by $\langle\cdot,\cdot\rangle$ the duality pairing between $X'$ and $X$.
\begin{definition}[Properties of an Operator]\label{def:operator_properties}
    An operator $\mathcal{A}: \mathbb{K} \rightarrow X'$ is 
    \begin{enumerate}
        \item \textbf{Monotone} if for all $u, v \in \mathbb{K}$,
            $$ \left\langle \mathcal{A}u - \mathcal{A}v, u - v \right\rangle \geq 0. $$
        \item \textbf{Coercive} if there exists $\bar{u}\in\mathbb{K}$ such that
            \begin{equation} 
            \label{coercive}
            \frac{\langle\mathcal{A} u, u - \bar{u} \rangle}{\lVert u \rVert_{X}} \rightarrow \infty, \end{equation}
            as $\|u\|_X \to \infty$ for $u \in \mathbb{K}$.
        \item \textbf{Hemicontinuous} if for all $u, v \in \mathbb{K}$ and $w \in X$, the mapping 
            $$ s \longmapsto \left\langle \mathcal{A}\left(su + \left(1-s\right) v\right), w \right\rangle $$
        is continuous on the interval $\left[0,1\right]$.
    \end{enumerate}
\end{definition}

\begin{lemma}\label{coercivity_lemma}
    Let $\mathcal{A}: \mathbb{K} \rightarrow X'$ be given by $\mathcal{A}= \mathcal{A}_{1} + \mathcal{A}_{2}$, where $\mathcal{A}_{i}$ is a monotone operator for $i = 1,2$. If one of the operators, say $\mathcal{A}_{2}$, is coercive, then $\mathcal{A}$ is coercive.
\end{lemma}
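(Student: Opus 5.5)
The plan is to use the coercivity point $\bar u\in\mathbb{K}$ of $\mathcal{A}_2$ as the coercivity point of $\mathcal{A}$ as well. For this $\bar u$ and any $u\in\mathbb{K}$ we split
\[
\langle \mathcal{A}u, u-\bar u\rangle = \langle \mathcal{A}_1 u, u-\bar u\rangle + \langle \mathcal{A}_2 u, u-\bar u\rangle .
\]
The second term, divided by $\|u\|_X$, tends to $+\infty$ by hypothesis. It therefore suffices to show that the first term, divided by $\|u\|_X$, is bounded below uniformly for large $\|u\|_X$.

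For the first term we use monotonicity of $\mathcal{A}_1$ with the pair $(u,\bar u)$, which is legitimate since both lie in $\mathbb{K}$:
\[
\langle \mathcal{A}_1 u - \mathcal{A}_1\bar u, u-\bar u\rangle \geq 0 .
\]
This gives
\[
\langle \mathcal{A}_1 u, u-\bar u\rangle \geq \langle \mathcal{A}_1 \bar u, u-\bar u\rangle \geq -\|\mathcal{A}_1\bar u\|_{X'}\bigl(\|u\|_X+\|\bar u\|_X\bigr).
\]
Here $\mathcal{A}_1\bar u\in X'$ is a fixed element, so its dual norm is a finite constant. Dividing by $\|u\|_X$ shows that
\[
\frac{\langle \mathcal{A}_1 u, u-\bar u\rangle}{\|u\|_X} \geq -\|\mathcal{A}_1\bar u\|_{X'}\Bigl(1+\frac{\|\bar u\|_X}{\|u\|_X}\Bigr),
\]
and the right-hand side stays above $-2\|\mathcal{A}_1\bar u\|_{X'}$ once $\|u\|_X\geq \|\bar u\|_X$.

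Adding the two estimates, $\langle\mathcal{A}u,u-\bar u\rangle/\|u\|_X$ is bounded below by a quantity tending to $+\infty$ minus a bounded one. Hence it tends to $+\infty$ as $\|u\|_X\to\infty$ with $u\in\mathbb{K}$, which is coercivity of $\mathcal{A}$ in the sense of \eqref{coercive}.

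There is no real obstacle. The only point needing care is that the same $\bar u$ must work for both operators. This is why monotonicity of $\mathcal{A}_1$ is applied against $\bar u$ itself rather than against some other base point, so that no further assumption on $\mathcal{A}_1$ is needed. Monotonicity of $\mathcal{A}_2$ is not used in this step.
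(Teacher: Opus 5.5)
Your proof is correct and matches the paper's argument essentially step for step: you take the coercivity point $\bar u$ of $\mathcal{A}_2$, apply monotonicity of $\mathcal{A}_1$ against $\bar u$, and use the dual-norm bound $-\|\mathcal{A}_1\bar u\|_{X'}(\|u\|_X+\|\bar u\|_X)$ before dividing by $\|u\|_X$. Your observation that monotonicity of $\mathcal{A}_2$ is never used holds for the paper's proof as well.
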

\begin{proof}
    Let $\mathcal{A}_{2}$ be coercive and let $\bar{u}\in \mathbb{K}$ be as in \eqref{coercive}. By the monotonicity of $\mathcal{A}_1$, we have for any $u \in \mathbb{K}$
    \begin{equation*}
        \langle \mathcal{A}_1 u - \mathcal{A}_1(\bar{u}), u -\bar{u} \rangle \geq 0.
    \end{equation*}
    Rearranging and using the properties of the duality pairing yields a lower bound for the first term
    \begin{equation*}
        \langle \mathcal{A}_1 u, u - \bar{u} \rangle \geq \langle \mathcal{A}_1(\bar{u}), u- \bar{u} \rangle \geq -\|\mathcal{A}_1( \bar{u})\|_{X'} (\|u\|_X + \|\bar{u}\|_X).
    \end{equation*}
    Now, consider the full operator $\mathcal{A}$. We have
    \begin{equation*}
        \frac{\langle \mathcal{A} u, u - \bar{u} \rangle}{\|u\|_X} = \frac{\langle \mathcal{A}_1 u, u - \bar{u} \rangle + \langle \mathcal{A}_2 u, u - \bar{u}\rangle}{\|u\|_X}
        \geq \frac{\langle \mathcal{A}_2 u, u - \bar{u} \rangle}{\|u\|_X} - \|\mathcal{A}_1( \bar{u})\|_{X'} \left(1 + \frac{\| \bar{u}\|_X}{\|u\|_X}\right).
    \end{equation*}
    Since $\mathcal{A}_2$ is coercive and the negative terms on the right-hand side are bounded, it follows that $\mathcal{A}$ is coercive.
\end{proof}

\begin{theorem}[Abstract Existence Theorem]
\label{abstract existence theorem}
    Suppose further that $\mathbb{K}$ is closed. Let $\mathcal{A}: \mathbb{K} \rightarrow X'$ be a monotone, coercive, and hemicontinuous operator. Then, there exists $u \in \mathbb{K}$ such that
    \begin{equation*}
        \left\langle\mathcal{A}u, v - u\right\rangle \geq 0,
    \end{equation*}
    for all $v \in \mathbb{K}$.
\end{theorem}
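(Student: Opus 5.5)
This is the classical Hartman--Stampacchia/Minty theorem on a closed convex set $\mathbb{K}$ of a reflexive space $X$. I would prove it in three steps: a finite-dimensional Brouwer step, Minty's lemma, and a Galerkin limit.

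\emph{Step 1: Minty's reformulation.} For $u\in\mathbb{K}$ I would show that
\[
\langle \mathcal{A}u, v-u\rangle\ge 0 \ \ \forall v\in\mathbb{K}
\quad\Longleftrightarrow\quad
\langle \mathcal{A}v, v-u\rangle\ge 0 \ \ \forall v\in\mathbb{K}.
\]
\begin{itemize}
\item[(i)] The forward implication follows from monotonicity, since $\langle\mathcal{A}v,v-u\rangle\ge\langle\mathcal{A}u,v-u\rangle$.
\item[(ii)] For the converse, fix $v\in\mathbb{K}$ and put $w_s=u+s(v-u)\in\mathbb{K}$ for $s\in(0,1]$, which lies in $\mathbb{K}$ by convexity. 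Applying the Minty inequality at $w_s$ gives $s\langle\mathcal{A}w_s,v-u\rangle\ge0$. Divide by $s$ and let $s\to0^+$; hemicontinuity yields $\langle\mathcal{A}u,v-u\rangle\ge0$.
\end{itemize}
The Minty inequality is convenient because the set $\{u\in\mathbb{K}:\langle\mathcal{A}v,v-u\rangle\ge0\}$ is closed and convex, hence weakly closed, for each fixed $v$.

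\emph{Step 2: finite-dimensional problems.} Let $\bar u$ be the coercivity point from Definition~\ref{def:operator_properties}. Consider finite-dimensional subspaces $E\subset X$ containing $\bar u$, and for $R>\|\bar u\|_X$ set $\mathbb{K}_{E,R}=\mathbb{K}\cap E\cap\bar B_R$. This set is nonempty, compact and convex.
\begin{itemize}
\item[(i)] The map $u\mapsto \mathcal{A}u|_E$ is a monotone, hemicontinuous map from $\mathbb{K}\cap E$ into the finite-dimensional space $E'$.
\item[(ii)] I would first obtain a Minty-type solution on $\mathbb{K}_{E,R}$: find $u$ with $\langle \mathcal{A}v,v-u\rangle\ge0$ for all $v\in\mathbb{K}_{E,R}$. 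For this I would use the KKM lemma, or equivalently Ky Fan's lemma, applied to the closed sets $S(v)=\{u:\langle\mathcal{A}v,v-u\rangle\ge0\}$.
\item[(iii)] The KKM property holds for the sets $\{u:\langle\mathcal{A}u,v-u\rangle\ge0\}$: if a convex combination $u=\sum\lambda_iv_i$ satisfied $\langle\mathcal{A}u,v_i-u\rangle<0$ for every $i$, summing with weights $\lambda_i$ would give $0<0$. By Step 1's monotonicity these sets are contained in the $S(v)$, so the $S(v)$ inherit the KKM property.
\item[(iv)] Compactness then gives $\bigcap_v S(v)\neq\emptyset$, and Step 1 converts the resulting point into a solution of $\langle\mathcal{A}u,v-u\rangle\ge0$ on $\mathbb{K}_{E,R}$.
\end{itemize}
This is the step I expect to need the most care. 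The difficulty is that $\mathcal{A}$ is only hemicontinuous, so a direct Brouwer argument on $u\mapsto P(u-\mathcal{A}u)$ is unavailable; passing through the Minty sets avoids any need for continuity of $\mathcal{A}$.

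\emph{Step 3: removing $R$ and passing to the limit.}
\begin{itemize}
\item[(i)] Testing the inequality from Step 2 with $v=\bar u$ gives $\langle\mathcal{A}u_{E,R},u_{E,R}-\bar u\rangle\le0$. By coercivity this forces $\|u_{E,R}\|_X\le R_0$, where $R_0$ is independent of $E$ and $R$.
\item[(ii)] Choose $R>R_0$. Then $u_E:=u_{E,R}$ lies in the interior of the ball relative to $E$. For any $v\in\mathbb{K}\cap E$ the point $u_E+s(v-u_E)$ lies in $\mathbb{K}_{E,R}$ for small $s>0$, so the inequality extends to all $v\in\mathbb{K}\cap E$.
\item[(iii)] Direct the finite-dimensional subspaces $E\ni\bar u$ by inclusion. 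Let $W_E$ be the weak closure of $\{u_{E'}:E'\supset E\}$. These sets lie in the weakly compact set $\mathbb{K}\cap\bar B_{R_0}$, which is weakly compact by reflexivity and weak closedness of the closed convex set $\mathbb{K}$. They also have the finite intersection property, so some $u$ belongs to all of them.
\item[(iv)] For a fixed $v\in\mathbb{K}$, every $u_{E'}$ with $E'\ni v$ satisfies the Minty inequality $\langle\mathcal{A}v,v-u_{E'}\rangle\ge0$. This inequality is weakly closed in $u$, so it passes to $u$.
\item[(v)] Step 1 then yields $\langle\mathcal{A}u,v-u\rangle\ge0$ for all $v\in\mathbb{K}$.
\end{itemize}
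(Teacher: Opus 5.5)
Your proof is correct, and it takes a genuinely different route from the paper. The paper gives no argument of its own and simply defers to Kinderlehrer--Stampacchia.

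The standard proof there shares the skeleton of your Steps 1 and 3:
\begin{itemize}
\item Minty's lemma;
\item a finite-intersection argument over finite-dimensional subspaces, using weak compactness of bounded closed convex sets;
\item truncation to $\mathbb{K}\cap\bar B_R$, with coercivity forcing the truncated solution into the interior of the ball.
\end{itemize}
It differs at the finite-dimensional step. There the solutions are produced by Brouwer's fixed-point theorem applied to $u\mapsto P_{\mathbb{K}\cap E}(u-\mathcal{A}u)$, after identifying $E'$ with $E$. This requires $\mathcal{A}$ to be continuous on finite-dimensional sections of $\mathbb{K}$.

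Your Step 2 replaces Brouwer by the KKM/Ky Fan lemma applied to the Minty sets $S(v)$. Their closedness is automatic, because $u\mapsto\langle\mathcal{A}v,v-u\rangle$ is affine and continuous. The KKM property is inherited, via monotonicity, from the sets $\{u:\langle\mathcal{A}u,v-u\rangle\geq 0\}$, where it follows from the elementary convex-combination argument you give.

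As a result, continuity of $\mathcal{A}$ enters only through Minty's lemma, and only along segments. So your proof runs under exactly the segment-wise hemicontinuity stated in the paper, which is a priori weaker than continuity on finite-dimensional subspaces. The Brouwer route is shorter once the fixed-point theorem is available, but it needs that stronger continuity, or an extra argument upgrading hemicontinuity.

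One simplification is available to you. The $S(v)$ are weakly closed and $\mathbb{K}\cap\bar B_R$ is weakly compact in the reflexive space $X$. You could therefore apply Ky Fan's lemma directly in the weak topology, which makes the Galerkin net of Step 3(iii)--(iv) unnecessary. Your version keeps KKM in finite dimensions at the price of that additional limiting step.
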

\begin{proof}
See \cite{KiSt00}.
\end{proof}

\subsubsection{Functions of Bounded Variation}\label{subsec:BV_pre}
Let $U \subset \mathbb{R}^{n}$ be a bounded open set. We denote by $\mathcal{H}^{n-1}$ the $(n-1)$-dimensional Hausdorff measure.
If $U$ has a Lipschitz boundary $\partial U$, $\nu$ denotes the outward unit normal to $\partial U$, which exists $\mathcal{H}^{n-1}$-a.e.\ on $\partial U$.
\begin{definition}[Bounded Variation]\label{BV_functions}
     Let $f \in L^{1}\left(U\right)$. We say that $f$ is a function of bounded variation in $U$ if its distributional derivative is represented by an $\mathbb{R}^{n}$-valued Radon measure $Df = \left(D_{1}f, \ldots, D_{n}f \right)$ in $U$. That is,
\begin{equation*}
    \int_{U} f \frac{\partial \phi}{\partial x_{i}} \dx = -\int_{U} \phi D_{i}f \quad \text{for all } \phi \in \mathcal{C}^{\infty}_{c}\left(U\right), \quad i = 1, \ldots, n.
\end{equation*}
The class of all functions of bounded variation in $U$ forms a Banach space, $BV\left(U\right)$, with the norm
    \[\lVert f\rVert_{BV(U)} := \lVert f\rVert_{L^1(U)} + \left\lvert Df\right\rvert (U),\] where $\left\lvert Df\right\rvert$ denotes the 
    total variation of the measure $Df$. 
\end{definition}
\begin{theorem}\label{thm:gauss-green-bv}
    For a bounded open set $U$ with a Lipschitz boundary $\partial U$, there exists a unique, bounded linear mapping
    \begin{equation*}
        \tr: BV\left(U\right) \rightarrow L^{1}\left(\partial U; \mathcal{H}^{n-1}\right)
    \end{equation*}
    such that
    \begin{equation*}
        \int_{U} f \frac{\partial \phi}{\partial x_{i}} \dx = - \int_{U} \phi D_{i}f + \int_{\partial U} \left(\phi \nu_{i}\right) \tr f \dd \mathcal{H}^{n-1}, \quad i = 1, \dots, n,
    \end{equation*}
    for all $f \in BV(U)$ and $\phi \in \mathcal{C}^{1}(\mathbb{R}^{n})$.
\end{theorem}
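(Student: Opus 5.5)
The statement is the Gauss--Green formula and trace theorem for $BV$ functions on Lipschitz domains. I will follow the standard route of Evans--Gariepy and Giusti: first localize and flatten the boundary, then build the trace on a half-space model, and finally patch the local pieces together with a partition of unity.

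\textbf{Step 1: localization.} Since $\partial U$ is compact and Lipschitz, cover it by finitely many cylinders. In each cylinder $U$ lies above the graph of a Lipschitz function $\gamma$, after a rotation. Take a smooth partition of unity $\{\zeta_k\}$ subordinate to this cover, together with one interior piece. It then suffices to treat $f\zeta_k$, which is supported in a single cylinder. Here $D(f\zeta_k)=\zeta_k Df+f\nabla\zeta_k\,\mathcal{L}^n$, which is still a finite Radon measure.

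\textbf{Step 2: construction of the trace on a graph domain.} For $x$ on the graph, define
\[
\tr f(x)=\lim_{r\to0}\frac{1}{|B(x,r)\cap U|}\int_{B(x,r)\cap U} f\dy .
\]
A cleaner construction uses vertical translates. Set
\[
f_\varepsilon(x')=f(x',\gamma(x')+\varepsilon).
\]
For a.e.\ pair $0<\varepsilon<\delta$ one has
\[
\int|f_\varepsilon-f_\delta|\,dx'\leq C\,|Df|\bigl(\{\gamma+\varepsilon<x_n<\gamma+\delta\}\bigr),
\]
where $C$ depends only on the Lipschitz constant of $\gamma$. This is proved first for smooth $f$ by the fundamental theorem of calculus along vertical lines. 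It extends to $BV$ using strict approximation by smooth functions, via the Anzellotti--Giaquinta theorem: $f_j\to f$ in $L^1$ with $|Df_j|(U)\to|Df|(U)$.

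Because $|Df|$ is a finite measure, the right-hand side tends to $0$ as $\delta\to0$. Hence $f_\varepsilon$ is Cauchy in $L^1(\partial U;\mathcal{H}^{n-1})$, with the graph parametrized through the Jacobian factor $\sqrt{1+|\nabla\gamma|^2}$, which is bounded above and below. Define $\tr f$ as the limit. The same estimate with $\delta$ of fixed size gives
\[
\|\tr f\|_{L^1(\partial U)}\leq C\|f\|_{BV(U)},
\]
so the trace is bounded and linear.

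\textbf{Step 3: the Gauss--Green identity.} For smooth $f$ the formula is the classical divergence theorem on Lipschitz domains. For general $f$, apply the identity on the shrunken domains $U_\varepsilon=\{x_n>\gamma(x')+\varepsilon\}$, where the boundary term involves $f_\varepsilon$. Then let $\varepsilon\to0$:
\begin{itemize}
\item the boundary term converges by Step 2;
\item $\int_{U_\varepsilon}\phi\,D_if\to\int_U\phi\,D_if$, because $|Df|(U\setminus U_\varepsilon)\to0$;
\item the volume integral converges by dominated convergence.
\end{itemize}
To apply the identity on $U_\varepsilon$ one needs the trace of $f$ on the interior surface $\partial U_\varepsilon$ to coincide with $f_\varepsilon$. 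This holds for a.e.\ $\varepsilon$, namely whenever $|Df|(\partial U_\varepsilon)=0$, which is true for all but countably many $\varepsilon$.

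\textbf{Step 4: uniqueness.} If two traces $g_1,g_2$ both satisfy the formula, then $\int_{\partial U}\phi\nu_i(g_1-g_2)\,d\mathcal{H}^{n-1}=0$ for all $\phi\in\mathcal{C}^1$ and all $i$. Choosing $\phi$ to approximate $\nu_i\psi$ and summing over $i$, using $|\nu|=1$ a.e., shows that $g_1-g_2$ integrates to zero against all continuous $\psi$. Hence $g_1=g_2$ $\mathcal{H}^{n-1}$-a.e.

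\textbf{Main obstacle.} The hardest part is the uniform slab estimate in Step 2 for genuinely $BV$ (not $W^{1,1}$) functions. It requires strict approximation, so that the convergence of total variations gives control on the thin slabs. It also requires handling the non-smooth Lipschitz graph, where the vertical direction is only uniformly transversal to $\partial U$. The other steps are routine. In the paper I would simply cite Evans--Gariepy, since this theorem is standard.
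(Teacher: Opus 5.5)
The paper gives no proof of its own here; it cites Evans--Gariepy, and your ingredients (localization, vertical translates $f_\varepsilon$, the slab estimate, shrunken domains $U_\varepsilon$, strict approximation) are the ones used there. However, your Step~3 is circular as written. For a general $f\in BV(U)$ you ``apply the identity on the shrunken domains $U_\varepsilon$'' with boundary term $f_\varepsilon$. At that point the only identity available is the classical one for functions that are $\mathcal{C}^1$ up to the boundary. The Gauss--Green formula for a $BV$ function on the Lipschitz domain $U_\varepsilon$ is precisely the theorem you are proving. Your justification is that the trace of $f$ on $\partial U_\varepsilon$ coincides with $f_\varepsilon$ whenever $|Df|(\partial U_\varepsilon)=0$. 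This presupposes a trace operator on $U_\varepsilon$ satisfying the formula. The step ``no jump across $\partial U_\varepsilon$, hence trace equals slice'' rests on the jump structure of $Df$ on hypersurfaces, which is normally derived \emph{from} the trace theorem. Besides, for an $L^1$ class the slice $f_\varepsilon$ only makes sense for a.e.\ $\varepsilon$. Your sentence ``for smooth $f$ the formula is the classical divergence theorem'' also does not cover the functions you approximate with. Anzellotti--Giaquinta approximants lie in $\mathcal{C}^\infty(U)\cap BV(U)$ and need not extend continuously to $\partial U$, so even for them the shrunken-domain argument is needed.

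The repair uses only tools you already have, in one of two ways.

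\emph{Evans--Gariepy's order.} For $f\in\mathcal{C}^\infty(U)\cap BV(U)$ the classical formula does hold on $U_\varepsilon$ for \emph{every} $\varepsilon$, with boundary values exactly $f_\varepsilon$. This is because, after localization, $\overline{U_\varepsilon}\cap\mathrm{supp}\,\phi\subset U$, where $f$ is smooth. Letting $\varepsilon\to0$ with your slab estimate gives the theorem for such $f$. Then take strict approximants $f_j\to f$ and pass to the limit:
\begin{itemize}
\item $\tr f_j\to\tr f$ in $L^1(\partial U)$ follows from the slab estimate together with $\limsup_j|Df_j|(K)\leq|Df|(K)$ for relatively closed $K\subset U$, which is a consequence of strict convergence;
\item $\int_U\phi\,D_if_j\to\int_U\phi\,D_if$ for $\phi\in\mathcal{C}^1(\mathbb{R}^n)$ holds because strict convergence lets no mass of $Df_j$ escape to $\partial U$.
\end{itemize}

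\emph{Your order, with an extra step.} Prove the $U_\varepsilon$-identity for a.e.\ $\varepsilon$ by approximation. Apply the classical formula to $f_j$ on $U_\varepsilon$. By Fubini, $(f_j)_\varepsilon\to f_\varepsilon$ in $L^1$ for a.e.\ $\varepsilon$ along a subsequence. Then use $|Df|(\partial U_\varepsilon\cap U)=0$ to get $\int_{U_\varepsilon}\phi\,D_if_j\to\int_{U_\varepsilon}\phi\,D_if$; that is the real role of your condition.

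A smaller point: the bound $\|\tr f\|_{L^1(\partial U)}\leq C\|f\|_{BV(U)}$ requires averaging $\delta$ over an interval such as $(\delta_0,2\delta_0)$. A single slice $f_\delta$ is not controlled by $\|f\|_{L^1(U)}$, and the averaging is what produces the term $\delta_0^{-1}\|f\|_{L^1}$.
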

\begin{proof}
    See \cite{evansgariepy2015}.
\end{proof}
\begin{definition}[Trace]
    Let $U$ be a bounded open set with a Lipschitz boundary $\partial U$.
    For $f\in BV(U)$, the function $\tr f \in L^1(\partial U)$ given by the above theorem is called the trace of $f$ on $\partial U$.
\end{definition}
\begin{theorem}[Compactness for $BV$ Functions]\label{compactness}
    Let $U$ be a bounded open set with a Lipschitz boundary $\partial U$. Assume $\left\{f_{k} \right\}_{k = 1}^{\infty}$ is a sequence in $BV\left(U\right)$ satisfying
    \begin{equation*}
       \sup_{k \in \mathbb{N}} \lVert f_k\rVert_{BV(U)} = \sup_{k \in \mathbb{N}}\left\{\int_{U} \left\lvert f_{k} \right\rvert \dx + \left\lvert Df_{k} \right\rvert\left(U\right) \right\} < \infty.
    \end{equation*}
    Then, there exists a subsequence $\left\{f_{k_{j}}\right\}_{j=1}^{\infty}$ and $f \in BV\left(U\right)$ such that $f_{k_{j}} \cws f$ in $BV(U)$ as $j \rightarrow \infty$. That is,
    \begin{align*}
        f_{k_{j}} &\rightarrow f \quad \text{in } L^{1}\left(U\right), \\
        Df_{k_{j}} &\cws Df \quad \text{in the sense of measures}.
    \end{align*}
\end{theorem}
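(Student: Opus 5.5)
This is the standard compactness theorem for $BV$, and I would follow the classical route of Evans--Gariepy rather than prove anything new. The plan has three steps: smooth approximation, Rellich compactness, and weak-$*$ compactness of measures.

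\textbf{Step 1: smooth approximation.} By the Anzellotti--Giusti approximation theorem, for each $k$ there is $g_k \in \mathcal{C}^\infty(U)\cap W^{1,1}(U)$ such that
\[
\|f_k-g_k\|_{L^1(U)}<1/k
\qquad\text{and}\qquad
\Bigl|\int_U|Dg_k|\dx-|Df_k|(U)\Bigr|<1/k .
\]
Hence $\sup_k\|g_k\|_{W^{1,1}(U)}<\infty$.

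\textbf{Step 2: $L^1$ compactness.} Since $\partial U$ is Lipschitz, there is a bounded extension operator into $W^{1,1}(\mathbb{R}^n)$, and the Rellich--Kondrachov compact embedding $W^{1,1}(U)\hookrightarrow\hookrightarrow L^1(U)$ holds. I would prove the embedding via mollification: the estimate $\|g*\rho_\delta-g\|_{L^1}\le \delta\|Dg\|_{L^1}$ holds uniformly in $k$, and for fixed $\delta$ the mollified family is equicontinuous, so Arzel\`a--Ascoli applies. This gives a subsequence $g_{k_j}\to f$ in $L^1(U)$. Because $\|f_k-g_k\|_{L^1}\to0$, also $f_{k_j}\to f$ in $L^1(U)$.

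\textbf{Step 3: identifying the limit and weak-$*$ convergence of derivatives.} By lower semicontinuity of total variation under $L^1$ convergence,
\[
|Df|(U)\le\liminf_j|Df_{k_j}|(U)<\infty ,
\]
so $f\in BV(U)$. The measures $Df_{k_j}$ have uniformly bounded total mass. By Banach--Alaoglu (Riesz representation, using that $\mathcal{C}_c(U)$ is separable), a further subsequence converges weakly-$*$ to some vector Radon measure $\mu$. Testing against $\phi\in\mathcal{C}^\infty_c(U)$ and passing to the limit via the $L^1$ convergence gives
\[
\int_U\phi\,\dd\mu
=\lim_j\Bigl(-\int_U f_{k_j}\,\partial_i\phi\dx\Bigr)
=-\int_U f\,\partial_i\phi\dx
=\int_U\phi\, D_if .
\]
So $\mu=Df$. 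By density this holds for all $\phi\in\mathcal{C}_c(U)$. Moreover, since the limit is identified uniquely, the whole subsequence converges and no further extraction is needed.

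The main obstacle is Step 2, namely compactness in $L^1$ up to the boundary. Here the Lipschitz regularity of $\partial U$ is essential, through the extension argument. I would simply cite \cite{evansgariepy2015} for the compact embedding and the approximation theorem.
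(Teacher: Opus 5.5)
Your proposal is correct and is the standard textbook proof: strict smooth approximation, then Rellich compactness of $W^{1,1}(U)$ in $L^1(U)$ via extension across the Lipschitz boundary, then lower semicontinuity of the total variation, and finally identification of the weak-$*$ limit of $Df_{k_j}$ by integrating by parts against $\mathcal{C}^\infty_c(U)$. The paper gives no argument of its own and simply cites Ambrosio--Fusco--Pallara, Chapter 3, where essentially this same reasoning appears (as it also does in Evans--Gariepy).
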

\begin{proof}
    See \cite{AFP2000}, Chapter 3.
\end{proof}

\begin{lemma}\label{lem:limit_bv}
Let $U$ be a bounded open set with a Lipschitz boundary $\partial U$. Let $f_{k} \cws f$ in $BV(U)$ such that $f_{k} \in W^{1,1}_0(U)$. Let $\phi^{k} \rightarrow \phi$ uniformly in $\mathcal{C}(\overline{U})$. Then
\[ \lim_{k \to \infty} \int_U \phi^{k} D_{i}f_{k} = \int_U \phi D_{i}f - \int_{\partial U} (\phi \nu_{i}) \tr f \dd\mathcal{H}^{n-1} \quad i = 1, \dots, n. \]
\end{lemma}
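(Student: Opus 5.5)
The plan is to apply the Gauss--Green formula of Theorem~\ref{thm:gauss-green-bv} to each $f_k$, and then pass to the limit separately in an interior term and a boundary term. Two facts do the work: uniform convergence of $\phi^k$ lets us replace $\phi^k$ by $\phi$, and weak-$*$ convergence of $f_k$ in $BV(U)$ handles everything that remains.

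\textbf{Step 1: rewrite $\int_U \phi D_i f$.} Extend $\phi$ continuously to $\mathbb{R}^n$ (Tietze). Take a smooth approximation $\phi_\delta \in \mathcal{C}^1(\mathbb{R}^n)$ with $\phi_\delta\to\phi$ uniformly on $\overline U$. For the limit $f$, Theorem~\ref{thm:gauss-green-bv} gives
\[
\int_U \phi_\delta D_i f = -\int_U f\,\partial_i\phi_\delta \dx + \int_{\partial U}\phi_\delta\nu_i \tr f\dd\mathcal{H}^{n-1}.
\]

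\textbf{Step 2: the identity for $f_k$.} Since $f_k\in W^{1,1}_0(U)$, we have $\tr f_k=0$ and $D_i f_k=\partial_i f_k\dx$. Hence
\[
\int_U \phi_\delta D_i f_k = -\int_U f_k\,\partial_i\phi_\delta\dx .
\]
As $k\to\infty$, the $L^1$ convergence $f_k\to f$ and the boundedness of $\partial_i\phi_\delta$ give
\[
\lim_{k}\int_U \phi_\delta D_i f_k = -\int_U f\,\partial_i\phi_\delta\dx = \int_U\phi_\delta D_i f - \int_{\partial U}\phi_\delta\nu_i\tr f\dd\mathcal{H}^{n-1}.
\]

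\textbf{Step 3: remove the approximations.} We control three differences.
\begin{itemize}
\item $\bigl|\int_U (\phi^k-\phi_\delta) D_i f_k\bigr| \le \|\phi^k-\phi_\delta\|_\infty \sup_k |Df_k|(U)$. This uses the uniform bound on $|Df_k|(U)$, which follows from weak-$*$ convergence in $BV$ by the uniform boundedness principle.
\item $\int_U(\phi_\delta-\phi)D_if$ is at most $\|\phi_\delta-\phi\|_\infty\,|Df|(U)$.
\item The boundary difference is at most $\|\phi_\delta-\phi\|_\infty\|\tr f\|_{L^1(\partial U)}$.
\end{itemize}
Letting first $k\to\infty$ and then $\delta\to0$ gives the claimed formula.

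\textbf{Main obstacle.} The delicate point is the uniform bound on $\sup_k |Df_k|(U)$, together with the correct interpretation of weak-$*$ convergence. Weak-$*$ convergence of measures alone only tests against $\mathcal{C}_0(U)$, not $\mathcal{C}(\overline U)$. This mismatch is precisely why the boundary term appears: mass of $D_if_k$ escapes to $\partial U$ and produces $-\phi\nu_i\tr f$. By routing the argument through the integration-by-parts formula, we never need to test $D_if_k$ against functions that do not vanish on $\partial U$.

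If the definition of $\cws$ in Theorem~\ref{compactness} does not already include boundedness of $\|f_k\|_{BV}$, I would invoke the Banach--Steinhaus theorem: $Df_k$ converges weak-$*$ as elements of the dual of $\mathcal{C}_0(U;\mathbb{R}^n)$, so the total variations are uniformly bounded. In the application, the sequence comes from Theorem~\ref{compactness}, where this bound holds by hypothesis.
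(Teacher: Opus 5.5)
Your proposal is correct and follows essentially the same route as the paper. Both arguments approximate $\phi$ by a $\mathcal{C}^1$ function, use $\tr f_k = 0$ to integrate by parts and pass to the limit through $L^1$ convergence, identify the limit with the Gauss--Green formula for $f$, and close with a three-term estimate using $M=\sup_k|Df_k|(U)$. Your version is slightly more careful on two points the paper glosses over: taking $\phi_\delta\in\mathcal{C}^1(\mathbb{R}^n)$ matches the hypothesis of Theorem~\ref{thm:gauss-green-bv} exactly, and you justify $M<\infty$ by Banach--Steinhaus (or by the compactness hypothesis), whereas the paper simply asserts it.
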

\begin{proof}

    Consider the bounded linear functional $\Lambda$ on $\mathcal{C}(\overline{U})$ defined by
    \begin{equation*}
        \Lambda(\eta) := \int_U \eta D_i f - \int_{\partial U} (\eta \nu_i) \tr f \dd \mathcal{H}^{n-1},
    \end{equation*}
    for any $\eta \in \mathcal{C}(\overline{U})$.

    Fix $\epsilon > 0$. Since $\mathcal{C}^1(\overline{U})$ is dense in $\mathcal{C}(\overline{U})$, there exists a function $\Phi_\epsilon \in \mathcal{C}^1(\overline{U})$ such that $\|\Phi_\epsilon - \phi\|_{L^{\infty}} < \epsilon$. Since $f_k \in W^{1,1}_0(U)$, we have $f_k = 0$ on $\partial U$. Integration by parts gives
    \begin{equation*}
        \int_U \Phi_\epsilon D_i f_k = - \int_U f_k \frac{\partial \Phi_\epsilon}{\partial x_i} \dx.
    \end{equation*}
    Observe that we can pass to the limit on the right-hand side
    \begin{equation*}
        \lim_{k \to \infty} \int_U \Phi_\epsilon D_i f_k = - \int_U f \frac{\partial \Phi_\epsilon}{\partial x_i} \dx,
    \end{equation*}
    since $f_k \to f$ in $L^1(U)$ and $\frac{\partial \Phi_\epsilon}{\partial x_i} \in L^\infty(U)$.
    Using the Gauss-Green formula (Theorem~\ref{thm:gauss-green-bv}) for the $BV$ function $f$, we identify this limit as
    \begin{equation} \label{eq:smooth_limit}
        - \int_U f \frac{\partial \Phi_\epsilon}{\partial x_i} \dx = \int_U \Phi_\epsilon D_i f - \int_{\partial U} (\Phi_\epsilon \nu_i) \tr f \dd \mathcal{H}^{n-1} = \Lambda(\Phi_\epsilon).
    \end{equation}

    Now, we estimate the difference for the sequence $\phi^k$, using the uniform bound on total variation $M := \sup_k |Df_k|(U) < \infty$. Observe that we have
    \begin{align*}
        \left| \int_U \phi^k D_i f_k - \Lambda(\phi) \right| &\leq \left| \int_U (\phi^k - \Phi_\epsilon) D_i f_k \right| + \left| \int_U \Phi_\epsilon D_i f_k - \Lambda(\Phi_\epsilon) \right|
         + \left| \Lambda(\Phi_\epsilon) - \Lambda(\phi) \right| \\
         &\leq M \|\phi^k - \Phi_\epsilon\|_{L^\infty} + \left| \int_U \Phi_\epsilon D_i f_k - \Lambda(\Phi_\epsilon) \right| + C \epsilon,
    \end{align*}
where $C$ depends on $|Df|(U)$ and $\|\tr f\|_{L^1(\partial U)}$. For $k$ large enough, $\|\phi^k - \Phi_\epsilon\|_{L^\infty} \le 2\epsilon$ and the second term vanishes as $k \to \infty$.
Taking the limit superior as $k \to \infty$ in the inequality yields
    \begin{equation*}
        \limsup_{k \to \infty} \left| \int_U \phi^k D_i f_k - \Lambda(\phi) \right| \leq (2M + C)\epsilon.
    \end{equation*}
    Since $\epsilon > 0$ was arbitrary, letting $\epsilon \to 0$ proves that the limit is zero.
\end{proof}

\subsubsection{Growth Assumptions}
 For the existence analysis, we impose the following growth conditions on the Lagrangian~$L$.
\begin{hyp}[Growth Conditions on the Lagrangian]\label{assump:L-growth}
    There exist constants $c > 0$ and $l > 1$ such that
    \begin{equation*}
        \left\lvert D_{v}L\left( x,v\right) \right\rvert \leq c\lvert v \rvert^{l-1} + c,
    \end{equation*}
    for all $\left( x,v\right) \in \left[0,1\right] \times \mathbb{R}$.
\end{hyp}
\begin{remark}\label{upp_bnd_L}
The upper bound on $D_{v}L$ implies an upper bound on $L$ given by
\begin{equation*}
    \left\lvert L\left( x,v\right) \right\rvert \leq c\lvert v \rvert^{l} + c.
\end{equation*}
\end{remark}
\begin{hyp}[Lower bound on the Lagrangian]\label{assump:low_bnd_L}
    There exist constants $c > 0$ and $\kappa > 1$ such that
    \begin{equation*}
        L\left( x,v\right) \geq \frac{1}{c} \left\lvert v\right\rvert^{\kappa} - c,
    \end{equation*}
    for all $\left( x,v\right) \in \left[0,1\right] \times \mathbb{R}$.
\end{hyp}

\subsection{Operator Formulation}\label{existence}
First, we define the operators that will be studied. We choose the regularization exponent $q$ such that
\begin{equation}\label{relation:q_and_l}
    q \geq l+1,
\end{equation}
where $l$ is the growth parameter of the Lagrangian $L$ in Assumption~\ref{assump:L-growth}.
This choice ensures that the regularized operator is well-defined and coercive (see Propositions~\ref{p and l} and~\ref{Coercivity theorem}). Moreover, let $q'$ be the conjugate exponent such that $\frac{1}{q} + \frac{1}{q'} = 1$.

We define the domains
$\mathcal{S}^{+}\subset \mathcal{C}^{1}\left(\bar{\Omega}\right)$,
$\mathcal{M}_{0}^{+}\subset W^{1,q}_{0}\left(\Omega \right)$, and $\mathcal{B}^{t}_{0} \subset BV \left(\Omega \right)$, respectively, as follows
\begin{equation}\label{admissible_sets}
\begin{aligned}
    \mathcal{S}^{+} &= \left\{w \in \mathcal{C}^{1}(\bar{\Omega}) : w = 0 \text{ on } \partial \Omega, \ w_{x} + \varphi_{x}^{0} > 0 \right\},\\
    \mathcal{M}_{0}^{+} &= \left\{w \in W^{1,q}_{0}\left(\Omega \right): w_{x} + \varphi_{x}^{0} \geq 0 \right\}, \\
    \mathcal{B}^{t}_{0} &= \left\{w \in BV \left(\Omega \right): w(0, x) = w(T, x) = 0 \text{ in the sense of traces}\right\}.
\end{aligned}
\end{equation}
We impose $w=0$ on $\partial\Omega$ in $\mathcal{S}^+$ (and in $W^{1,q}_0(\Omega)$) to encode the homogenized Dirichlet condition $\psi=0$ on $\partial\Omega$; in contrast, $\mathcal{B}_0^t$ only prescribes the temporal traces, while the lateral boundary conditions are enforced through the boundary terms in the variational inequality.

\begin{proposition}
    The set $\mathcal{M}_{0}^{+}$ is a closed non-empty convex set.
\end{proposition}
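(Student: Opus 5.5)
The proof splits into three independent checks: non-emptiness, convexity, and closedness of $\mathcal{M}_{0}^{+}$ in $W^{1,q}_{0}(\Omega)$.

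\emph{Non-emptiness.} The zero function belongs to $W^{1,q}_0(\Omega)$. It satisfies $w_x+\varphi^0_x=\varphi^0_x = \frac{T-t}{T}m_0(x)+\frac{t}{T}m_T(x)$. By Assumption~\ref{assump:positivity} this is strictly positive on $\bar\Omega$, being a convex combination of positive continuous functions. Hence $0\in\mathcal{M}_0^+$.

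\emph{Convexity.} Take $w_1,w_2\in\mathcal{M}_0^+$ and $\lambda\in[0,1]$. Then $\lambda w_1+(1-\lambda)w_2\in W^{1,q}_0(\Omega)$, since this is a linear space. The weak derivative is linear, so
\[
(\lambda w_1+(1-\lambda)w_2)_x+\varphi^0_x=\lambda(w_{1,x}+\varphi^0_x)+(1-\lambda)(w_{2,x}+\varphi^0_x)\geq 0
\]
almost everywhere, as a convex combination of a.e.\ nonnegative functions.

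\emph{Closedness.} This is the only step needing an argument, and it is still routine. Let $w_k\in\mathcal{M}_0^+$ with $w_k\to w$ in $W^{1,q}(\Omega)$. Since $W^{1,q}_0(\Omega)$ is a closed subspace, $w\in W^{1,q}_0(\Omega)$. Moreover $w_{k,x}\to w_x$ in $L^q(\Omega)$, so along a subsequence $w_{k,x}\to w_x$ almost everywhere. Passing to the pointwise limit in $w_{k,x}+\varphi^0_x\geq0$ gives $w_x+\varphi^0_x\geq0$ a.e., so $w\in\mathcal{M}_0^+$.

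Alternatively, one can avoid subsequences by testing against nonnegative $\xi\in L^{q'}(\Omega)$. The inequality $\int_\Omega (w_{k,x}+\varphi^0_x)\xi\geq0$ passes to the limit by $L^q$–$L^{q'}$ duality, and since $\xi\ge0$ is arbitrary this yields the a.e.\ inequality. The same duality argument shows $\mathcal{M}_0^+$ is even weakly closed, though convexity plus strong closedness already imply that by Mazur's lemma.
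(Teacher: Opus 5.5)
Your proof is correct and takes essentially the same approach as the paper: you use the zero function for non-emptiness, linearity of the weak derivative for convexity, and a.e.\ convergence of $w_{k,x}$ along a subsequence for closedness. Your $L^{q}$--$L^{q'}$ duality variant and the remark on weak closedness are also valid, though the statement does not need them.
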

\begin{proof}
The set $\mathcal{M}_{0}^{+}$ is non-empty, as it contains the zero function $w=0$ (since $\varphi_{x}^{0} \geq 0$).
To prove that $\mathcal{M}_{0}^{+}$ is closed, let $\{w^{n}\} \subset \mathcal{M}_{0}^{+}$ be such that $w^{n} \rightarrow w$ strongly in $W^{1,q}_{0}(\Omega)$ as $n \rightarrow \infty$.
Strong convergence implies that $Dw^{n} \to Dw$ in $L^q(\Omega)$. Thus, there exists a relabeled subsequence such that $w^{n}_{x} \to w_{x}$ almost everywhere. Therefore, almost everywhere, we have
\begin{equation*}
    w_{x} + \varphi_{x}^{0} = \lim_{n \rightarrow \infty} \left(w^{n}_{x} + \varphi_{x}^{0}\right) \geq 0.
\end{equation*}
To see that $\mathcal{M}_{0}^{+}$ is convex, let $w^{1}, w^{2} \in \mathcal{M}_{0}^{+}$ and $\lambda \in [0, 1]$.
By linearity, we have
\begin{align*}
    \left( \lambda w^{1} + \left(1-\lambda\right)w^{2}\right)_{x} + \varphi_{x}^{0} &= \lambda w_{x}^{1} + \left(1-\lambda\right)w_{x}^{2} + \varphi_{x}^{0} \\
    & = \lambda w_{x}^{1} + \lambda \varphi_{x}^{0} + \left(1-\lambda\right)w_{x}^{2} + \left(1 - \lambda\right)\varphi_{x}^{0} \\
    &\geq 0.
\end{align*}
Therefore, $\lambda w^{1} + \left(1-\lambda\right)w^{2} \in \mathcal{M}_{0}^{+}$.
Hence, $\mathcal{M}_{0}^{+}$ is a non-empty closed convex set, as desired.
\end{proof}

To simplify the signs in our calculations, we define
\begin{equation}\label{F transformation}
    \tilde{F}\left(t, x, j, m\right) = F\left( x,-j - \varphi_{t}^{0},m+\varphi_{x}^{0}\right).
\end{equation}
The partial derivatives of $\tilde{F}$ are
\begin{equation}\label{par. der. F}
\begin{aligned}
    \tilde{F}_j(t, x, j, m) &= -F_j(x, -j - \varphi^{0}_{t}, m + \varphi^{0}_{x}), \\ \tilde{F}_m(t, x, j, m) &= F_m(x, -j - \varphi^{0}_{t}, m + \varphi^{0}_{x}).
\end{aligned}
\end{equation}
\begin{definition}[Associated Operator and its Regularization]
    We define the operator $A: \mathcal{S}^{+} \rightarrow \left(\mathcal{B}^{t}_{0}\right)'$ by
    \begin{multline}\label{operator}
        \left\langle A w, \zeta \right\rangle = \int_{\Omega} \tilde{F}_j\left(t, x,w_{t},w_{x}\right)D_{t}\zeta + \tilde{F}_m\left(t, x,w_{t},w_{x}\right)D_{x}\zeta + \left(w + \varphi^{0}\right)D_{x}\zeta  \\  + \int_0^T  \tilde{F}_m(t, 0, w_t, w_x)  \zeta(t,0) \dt - \int_0^T ( \tilde{F}_m(t, 1, w_t, w_x) + 1) \zeta(t,1) \dt
    \end{multline}
    for any $w \in \mathcal{S}^{+}$ and $\zeta \in \mathcal{B}^{t}_{0}$.

    Furthermore, for $0 < \varepsilon < 1$, we define the regularized operator $A_\varepsilon: \mathcal{M}_{0}^{+} \rightarrow \left(W^{1,q}_{0}\right)'$ by
    \begin{multline}\label{reg. operator}
        \left\langle A_{\varepsilon} w, \zeta \right\rangle = \int_{\Omega} \Bigl[ \tilde{F}_j\left(t, x,w_{t},w_{x} + \varepsilon \right)\zeta_{t} + \tilde{F}_m\left(t, x,w_{t},w_{x} + \varepsilon \right)\zeta_{x} \\
        + \left(w + \varphi^{0}\right)\zeta_{x}
        + \varepsilon \left( \left\lvert w\right\rvert^{q-2}w \zeta + \left\lvert Dw\right\rvert^{q-2} Dw \cdot D\zeta \right)\Bigr] \dd \Omega,
    \end{multline}
    for any $w \in \mathcal{M}^{+}_{0}$ and $\zeta \in W^{1,q}_{0}$.
\end{definition}

\begin{remark}
    The domain $\mathcal{M}_{0}^{+}$ is not suitable for the non-regularized operator $A$ because the partial derivatives $\tilde{F}_j$ and $\tilde{F}_m$ involve denominators that vanish if the total density is zero. Thus, $A$ is defined on $\mathcal{S}^{+}$, where the density is strictly positive (bounded away from zero on the compact set $\bar{\Omega}$).

    The regularizing term in~\eqref{reg. operator} corresponds to the weak form of the operator $-\Delta_{q} w + |w|^{q-2}w$. When tested against $w$, this term generates the full $W^{1,q}$-norm, yielding coercivity.
\end{remark}

In the next section, we apply Theorem~\ref{abstract existence theorem} to find a solution $\psi^\varepsilon \in \mathcal{M}_{0}^{+}$ that satisfies the regularized variational inequality
\begin{equation}\label{weak solution}
    \left\langle A_{\varepsilon} \psi^{\varepsilon}, w - \psi^{\varepsilon} \right\rangle \geq 0,
\end{equation}
for all $w \in \mathcal{M}_{0}^{+}$.

\subsection{Existence of Approximate Solutions}
To apply Theorem~\ref{abstract existence theorem},
we verify that the operator $A_\varepsilon$ is monotone, coercive, and hemicontinuous. 
To this end, we observe that $A_{\varepsilon}$ defined in~\eqref{reg. operator} decomposes as $A_\varepsilon = \mathcal{A}_{\tilde{F}} + \mathcal{A}_g + \mathcal{A}_{reg}$, where
\begin{equation}\label{Split_operator}
\begin{aligned}
    &\textbf{The perspective part $\mathcal{A}_{\tilde{F}}$:} \\
    & \quad\quad \left\langle \mathcal{A}_{\tilde{F}} w, \zeta \right\rangle = \int_{\Omega} \left[ \tilde{F}_j\left(t, x,w_{t},w_{x} + \varepsilon \right)\zeta_{t} + \tilde{F}_m\left(t, x,w_{t},w_{x} + \varepsilon \right)\zeta_{x} \right] \dd \Omega, \\
    &\textbf{The ranking part $\mathcal{A}_g$:} \\
    & \quad\quad \left\langle \mathcal{A}_g w, \zeta \right\rangle = \int_{\Omega} \left(w + \varphi^{0}\right)\zeta_{x} \dd \Omega, \\
    &\textbf{The regularizing part $\mathcal{A}_{reg}$:} \\
    & \quad\quad \left\langle \mathcal{A}_{reg} w, \zeta \right\rangle = \varepsilon \int_{\Omega}\left( \left\lvert w\right\rvert^{q-2}w \zeta + \left\lvert Dw\right\rvert^{q-2} Dw \cdot D\zeta \right) \dd \Omega.
    \end{aligned}
\end{equation}
When no ambiguity arises, we write $\int_{\Omega} \tilde{F}_j\dd\Omega$ and 
$\int_{\Omega} \tilde{F}_m\dd\Omega$ in place of 
$\int_{\Omega} \tilde{F}_j\!\left(t,x,w_{t},w_{x}+\varepsilon\right)\dd\Omega$ and 
$\int_{\Omega} \tilde{F}_m\!\left(t,x,w_{t},w_{x}+\varepsilon\right)\dd\Omega$, 
suppressing the dependence on $(t,x,w_t,w_x+\varepsilon)$.

We first verify that $A_\varepsilon$ is well-defined.
\begin{proposition}\label{p and l}
    Suppose Assumptions~\ref{assump:H}--\ref{assump:L-growth} hold and the relation~\eqref{relation:q_and_l} is satisfied. Then, the operator $A_{\varepsilon}$ is well-defined.
\end{proposition}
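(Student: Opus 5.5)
The plan is to show that for each fixed $w\in\mathcal{M}_0^{+}$, every integrand in \eqref{reg. operator} is a product of a function in $L^{q'}(\Omega)$ with $\zeta$, $\zeta_t$ or $\zeta_x$, which lie in $L^q(\Omega)$. Hölder's inequality then shows that $\zeta\mapsto\langle A_\varepsilon w,\zeta\rangle$ is a bounded linear functional on $W^{1,q}_0(\Omega)$, i.e.\ $A_\varepsilon w\in (W^{1,q}_0)'$. I treat the three pieces of the splitting \eqref{Split_operator} separately.

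\textbf{The perspective part $\mathcal{A}_{\tilde F}$.} By \eqref{F transformation}, the arguments passed to $F$ are $j=-w_t-\varphi^0_t$ and $m=w_x+\varepsilon+\varphi^0_x$. Since $w\in\mathcal{M}_0^{+}$, we have $w_x+\varphi^0_x\ge 0$ a.e., hence $m\ge\varepsilon>0$ a.e. This is the role of the shift by $\varepsilon$: the representation \eqref{non-sing pers func} and the formulas \eqref{eq:pers-derivatives} apply a.e. The velocity $v=j/m$ satisfies
\[
|v|\le \varepsilon^{-1}\bigl(|w_t|+\|\varphi^0_t\|_{L^\infty}\bigr)\le C_\varepsilon\bigl(|Dw|+1\bigr),
\]
because $\varphi^0\in\mathcal{C}^1(\bar\Omega)$ by Assumption~\ref{assump:positivity}.

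By \eqref{par. der. F}, \eqref{eq:pers-derivatives}, Assumption~\ref{assump:L-growth} and Remark~\ref{upp_bnd_L}, we get
\[
|\tilde F_j|=|D_vL(x,v)|\le c|v|^{l-1}+c\le C_\varepsilon\bigl(|Dw|^{l-1}+1\bigr),
\]
\[
|\tilde F_m|\le |v|\,|D_vL(x,v)|+|L(x,v)|\le C\bigl(|v|^{l}+1\bigr)\le C_\varepsilon\bigl(|Dw|^{l}+1\bigr).
\]
Measurability follows because $F_j$ and $F_m$ are continuous on $[0,1]\times\mathbb{R}\times\mathbb{R}^+$ (Remark~\ref{rem:Lagrangian}), so the compositions with the measurable functions $(x,w_t,w_x)$ are measurable.

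Integrability is the only real point of the proof. We have $|\tilde F_m|^{q'}\le C(|Dw|^{lq'}+1)$, and this is integrable provided $lq'\le q$, i.e.\ $l\le q-1$. This is exactly \eqref{relation:q_and_l}. Since $l-1<l$, the same argument gives $\tilde F_j\in L^{q'}(\Omega)$. Here $Dw\in L^q(\Omega)$ and $\Omega$ is bounded.

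\textbf{The ranking part $\mathcal{A}_g$.} Since $q\ge l+1>2$, we have $q'<2<q$. Therefore $w+\varphi^0\in L^q(\Omega)\subset L^{q'}(\Omega)$ on the bounded domain $\Omega$. Hölder then gives $|\int_\Omega (w+\varphi^0)\zeta_x|\le C\|\zeta\|_{W^{1,q}}$.

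\textbf{The regularizing part $\mathcal{A}_{reg}$.} We have $\bigl||w|^{q-2}w\bigr|^{q'}=|w|^{q}$ and $\bigl||Dw|^{q-2}Dw\bigr|^{q'}=|Dw|^q$, so both factors lie in $L^{q'}(\Omega)$. Hölder again bounds these terms by $\varepsilon\bigl(\|w\|_{L^q}^{q-1}\|\zeta\|_{L^q}+\|Dw\|_{L^q}^{q-1}\|D\zeta\|_{L^q}\bigr)$.

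Summing the three estimates shows $|\langle A_\varepsilon w,\zeta\rangle|\le C(\varepsilon,w,\varphi^0)\|\zeta\|_{W^{1,q}_0}$. The map is clearly linear in $\zeta$, so $A_\varepsilon$ is well-defined from $\mathcal{M}_0^{+}$ into $(W^{1,q}_0)'$. The only delicate step is the growth bound on $\tilde F_m$ together with the exponent matching $lq'\le q$; the lower bound $m\ge\varepsilon$ is essential there, since without it $v$ would not be controlled by $|Dw|$.
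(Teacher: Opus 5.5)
Your proof is correct and follows essentially the same route as the paper: both use the lower bound $m\ge\varepsilon$ and Assumption~\ref{assump:L-growth} to place $\tilde F_j$ and $\tilde F_m$ in $L^{q'}(\Omega)$, with the exponent condition $lq'\le q$ (i.e.\ $q\ge l+1$) as the key point. The paper pairs these coefficients with $\zeta_t,\zeta_x$ via Young's inequality and calls the ranking and regularizing parts immediate; your use of H\"older gives the bound linear in $\|\zeta\|_{W^{1,q}_0}$ directly, and you verify the other two parts and measurability explicitly.
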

\begin{proof}
    Let $0<\varepsilon<1$ be fixed. We only need to prove that $\mathcal{A}_{\tilde{F}}$ is well-defined, since well-definedness for $\mathcal{A}_g$ and $\mathcal{A}_{reg}$ is immediate.
    By Young's inequality, we have the following
    \begin{align*}
        \left|\int_{\Omega}\tilde{F}_j \zeta_{t}\dd \Omega \right|
        &\leq \int_{\Omega} \left\lvert \tilde{F}_j \zeta_{t} \right\rvert \dd \Omega \\
        &\leq \frac{q-1}{q} \int_{\Omega}\left\lvert \tilde{F}_j\right\rvert^{\frac{q}{q-1}}\dd \Omega + \frac{1}{q} \int_{\Omega}\left\lvert \zeta_{t}\right\rvert^{q}\dd \Omega =: \mathrm{I} + \mathrm{II}.
    \end{align*}
    Observe that since $w \in \mathcal{M}_0^+$, we have $w_x + \varphi_x^0 \geq 0$.
    By~\eqref{par. der. F} and~\eqref{eq:pers-derivatives} together with Assumption~\ref{assump:L-growth}, we observe that
    \begin{align*}
        \mathrm{I} = \int_{\Omega} \left\lvert \tilde{F}_j\right\rvert^{\frac{q}{q-1}}\dd \Omega &\leq C \int_{\Omega} \left( \left\lvert \frac{w_{t} + \varphi^{0}_{t}}{w_{x} + \varphi^{0}_{x} + \varepsilon}\right\rvert^{l-1} \right)^{\frac{q}{q-1}}
        \dd \Omega + C \\ 
        &\leq C(\varepsilon) \int_{\Omega} \left\lvert w_{t} + \varphi^{0}_{t} \right\rvert^{(l-1)\frac{q}{q-1}} \dd \Omega + C.
    \end{align*}
    Therefore, we have
    \begin{equation}\label{bound tilde{F}_j}
    \left|\int_{\Omega}\tilde{F}_j \zeta_{t}\dd \Omega\right|  \leq C (\varepsilon) \int_{\Omega} \left\lvert w_{t} + \varphi^{0}_{t} \right\rvert^{(l-1)\frac{q}{q-1}} \dd \Omega + \frac{1}{q} \int_{\Omega}\left\lvert \zeta_{t} \right\rvert^{q}\dd \Omega + C.
    \end{equation}
    Since $q \geq l+1 > l$, we have $(l-1)\frac{q}{q-1} < q$, thus the integral is well-defined.
    Similarly, we have by~\eqref{par. der. F},~\eqref{eq:pers-derivatives}, Assumption~\ref{assump:L-growth}, and Remark~\ref{upp_bnd_L}
    \begin{align*}
        \int_{\Omega} \left\lvert \tilde{F}_m \right\rvert^{\frac{q}{q-1}}\dd \Omega &\leq C \int_{\Omega} \left(\left\lvert \frac{w_{t} + \varphi^{0}_{t}}{w_{x} + \varphi^{0}_{x} + \varepsilon}\right\rvert^{l} \right)^{\frac{q}{q-1}}
\dd \Omega + C \\ 
        &\leq C(\varepsilon) \int_{\Omega} \left\lvert w_{t} + \varphi^{0}_{t} \right\rvert^{l\frac{q}{q-1}} \dd \Omega + C.
    \end{align*}
    Therefore, we have
    \begin{equation}\label{bound tilde{F}_m}
    \left|\int_{\Omega}\tilde{F}_m \zeta_{x}\dd \Omega\right|  \leq C (\varepsilon) \int_{\Omega} \left\lvert w_{t} + \varphi^{0}_{t} \right\rvert^{l\frac{q}{q-1}} \dd \Omega + \frac{1}{q} \int_{\Omega}\left\lvert \zeta_{x} \right\rvert^{q}\dd \Omega + C.
    \end{equation}
    Since $q \geq l+1$, we have $l\frac{q}{q-1} \leq q$, so the integral is well-defined. This concludes the proof.
\end{proof}

Now, we show that the operator $A_{\varepsilon}$ is monotone and coercive.
\begin{proposition}\label{Monotonicity theorem}
    Suppose Assumptions~\ref{assump:H}-\ref{assump:L-growth} hold and the relation~\eqref{relation:q_and_l} is satisfied. Then $A_{\varepsilon}$ is monotone in $\mathcal{M}^{+}_{0}$.
\end{proposition}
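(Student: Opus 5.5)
Since $A_\varepsilon = \mathcal{A}_{\tilde F} + \mathcal{A}_g + \mathcal{A}_{reg}$ as in~\eqref{Split_operator}, and monotonicity is preserved under sums, I will prove that each of the three parts is monotone on $\mathcal{M}_0^+$ separately. Throughout, fix $w^1,w^2\in\mathcal{M}_0^+$ and set $\zeta=w^1-w^2\in W^{1,q}_0(\Omega)$. All the integrals involved are finite by Proposition~\ref{p and l}.

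\textbf{Regularizing and ranking parts.} The regularizing part $\mathcal{A}_{reg}$ is monotone by the standard inequality
\[
(|a|^{q-2}a-|b|^{q-2}b)\cdot(a-b)\ge 0 \quad\text{for all vectors } a,b,
\]
applied once with $a=w^1$, $b=w^2$ and once with $a=Dw^1$, $b=Dw^2$. For the ranking part,
\[
\langle \mathcal{A}_g w^1-\mathcal{A}_g w^2,\zeta\rangle=\int_\Omega \zeta\,\zeta_x\,\dd\Omega=\tfrac12\int_\Omega (\zeta^2)_x\,\dd\Omega .
\]
This integral vanishes because $\zeta=0$ on $\partial\Omega$. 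The identity is immediate for smooth compactly supported functions and extends to $W^{1,q}_0$ by density, since $q\ge 2$ makes $\zeta^2\in W^{1,1}_0$. So $\mathcal{A}_g$ contributes exactly zero, as in the uniqueness proof of Theorem~\ref{uniq. assoc. prob.}.

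\textbf{Perspective part.} This is the main step. Write
\[
j^i=-w^i_t-\varphi^0_t, \qquad m^i=w^i_x+\varphi^0_x+\varepsilon .
\]
Then $m^i\ge\varepsilon>0$ a.e., because $w^i\in\mathcal{M}_0^+$. By~\eqref{par. der. F}, and using $\zeta_t=-(j^1-j^2)$ and $\zeta_x=m^1-m^2$, the integrand of $\langle \mathcal{A}_{\tilde F}w^1-\mathcal{A}_{\tilde F}w^2,\zeta\rangle$ is
\[
(F_j^1-F_j^2)(j^1-j^2)+(F_m^1-F_m^2)(m^1-m^2),
\]
where $F^i_j=F_j(x,j^i,m^i)$ and $F^i_m=F_m(x,j^i,m^i)$. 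This is precisely the monotonicity of the gradient of the convex function $F(x,\cdot,\cdot)$ on $\mathbb{R}\times\mathbb{R}^+$, and it can be shown pointwise with Lemma~\ref{lem:pers-mu}. Since $F$ is $1$-homogeneous, Euler's identity gives $F_j(x,j,m)j+F_m(x,j,m)m=F(x,j,m)$ (this also follows from~\eqref{eq:pers-derivatives}). Expanding the product, the integrand equals
\[
\bigl[F(x,j^1,m^1)-F_j^2j^1-F_m^2m^1\bigr]+\bigl[F(x,j^2,m^2)-F_j^1j^2-F_m^1m^2\bigr].
\]
Lemma~\ref{lem:pers-mu}, applied with $(j,m)=(j^2,m^2)$, $(j_0,m_0)=(j^1,m^1)$ and then with the roles swapped, shows both brackets are nonnegative a.e. Integrating over $\Omega$ gives monotonicity of $\mathcal{A}_{\tilde F}$.

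\textbf{Expected obstacle.} The only delicate point is justification rather than algebra. The pointwise manipulations need $m^i>0$, which the $\varepsilon$-shift guarantees. Integrability of the products $F^i_j j^k$ and $F^i_m m^k$ needs Hölder's inequality with the bounds from Proposition~\ref{p and l}, using $q\ge l+1$. The vanishing of $\int_\Omega\zeta\zeta_x$ needs the trace or density argument for $W^{1,q}_0$. Summing the three nonnegative contributions proves the proposition.
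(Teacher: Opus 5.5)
Your proof is correct. It uses the same three-part splitting as the paper and handles the ranking part the same way; your density argument for $\int_\Omega \zeta\zeta_x \dd\Omega = 0$ just makes explicit what the paper leaves implicit.

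Where you differ is in how convexity is turned into monotonicity for $\mathcal{A}_{\tilde F}$, and likewise for $\mathcal{A}_{reg}$. The paper argues at the level of functionals: $\mathcal{J}(w)=\int_\Omega \tilde F(t,x,w_t,w_x+\varepsilon)\dd\Omega$ is convex because $F(x,\cdot,\cdot)$ is jointly convex. Then $\mathcal{A}_{\tilde F}$ is declared to be its G\^ateaux derivative, and derivatives of convex functionals are monotone.

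You work pointwise instead. Euler's identity combined with Lemma~\ref{lem:pers-mu} is exactly the subgradient inequality $F_j^1(j^2-j^1)+F_m^1(m^2-m^1)\leq F^2-F^1$. Adding it to its mirror image shows the integrand is nonnegative a.e., which is essentially the computation in the uniqueness proof of Theorem~\ref{uniq. assoc. prob.}.

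The paper's version is shorter and more conceptual. Yours is more elementary, and it never has to justify the G\^ateaux-derivative claim. That claim needs differentiation under the integral sign, and it only makes sense one-sidedly along segments inside $\mathcal{M}_0^+$. For a general direction $\zeta\in W^{1,q}_0$ with $\zeta_x$ unbounded below, $w+\lambda\zeta$ has negative mass on a set of positive measure for every $\lambda>0$, and there $\tilde F=+\infty$. Your argument needs only $m^i\geq\varepsilon$ a.e.\ and finiteness of the pairings, which Proposition~\ref{p and l} already supplies.
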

\begin{proof}
We proceed by proving that each part of $A_{\varepsilon}$, given in~\eqref{Split_operator}, is monotone.
\begin{itemize}
    \item  \textbf{The perspective part $\mathcal{A}_{\tilde{F}}$}: Let the functional $\mathcal{J}: \mathcal{M}^{+}_{0} \to \mathbb{R}$ be defined by
    \begin{equation*}
        \mathcal{J}(w) = \int_{\Omega} \tilde{F}(t, x, w_t, w_x+\varepsilon) \dd \Omega.
    \end{equation*}
    Since $F$ is convex in its last two arguments as in Remark~\ref{rem:Lagrangian}, $\tilde{F}$ is also convex because it is obtained by a linear transformation of coordinates. Since the integral preserves convexity, $\mathcal{J}$ is a convex functional. Note that
    \begin{equation*}
         \langle \mathcal{A}_{\tilde{F}} w,\zeta\rangle = \left. \frac{d}{d\lambda} \mathcal{J}(w+\lambda\zeta)\right\rvert_{\lambda=0},
    \end{equation*}
    that is, $\mathcal{A}_{\tilde{F}}$ is the G\^{a}teaux derivative of $\mathcal{J}$. Hence, it is a monotone operator.
    \medskip

    \item \textbf{The ranking part $\mathcal{A}_g$}: To check for monotonicity, we compute
    \begin{equation*}
    \left\langle \mathcal{A}_g(w^1) - \mathcal{A}_g(w^2), w^1-w^2 \right\rangle = \frac{1}{2} \int_{\Omega} \frac{\partial}{\partial x} (w^1 - w^2)^2 \dd \Omega = 0,
    \end{equation*}
    since $w^1-w^2 \in W_{0}^{1,q}(\Omega)$ and thus has zero trace on the boundary.
    \medskip

    \item \textbf{The regularizing part $\mathcal{A}_{reg}$:} The term
    \begin{equation*}
         \zeta\mapsto \enskip \varepsilon \left(\left\lvert w\right\rvert^{q-2}w\zeta+ \left\lvert Dw\right\rvert^{q-2} Dw \cdot D\zeta \right),
    \end{equation*}
    is the G\^{a}teaux derivative of the strictly convex functional
    \begin{equation*}
        \frac{\varepsilon}{q} \left( \|w\|_{L^q(\Omega)}^q + \|Dw\|_{L^q(\Omega)}^q \right).
    \end{equation*}
    Therefore, $\mathcal{A}_{reg}$ is a monotone operator (even strictly monotone).
\end{itemize}
Since $A_\varepsilon$ is the sum of three monotone operators, it is also monotone.
\end{proof}
\begin{proposition}\label{Coercivity theorem}
Suppose Assumptions~\ref{assump:H}--\ref{assump:L-growth} hold and the relation~\eqref{relation:q_and_l} is satisfied. Then $A_{\varepsilon}$ is coercive in $\mathcal{M}^{+}_{0}$.
\end{proposition}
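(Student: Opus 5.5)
We will use Lemma~\ref{coercivity_lemma} with the decomposition $A_\varepsilon = \mathcal{A}_{\tilde F} + \mathcal{A}_g + \mathcal{A}_{reg}$ from~\eqref{Split_operator}. Proposition~\ref{Monotonicity theorem} shows that each part is monotone on $\mathcal{M}_0^+$. We group $\mathcal{A}_1 := \mathcal{A}_{\tilde F} + \mathcal{A}_g$, which is a sum of monotone operators and hence monotone, and set $\mathcal{A}_2 := \mathcal{A}_{reg}$. By Lemma~\ref{coercivity_lemma} it then suffices to show that $\mathcal{A}_{reg}$ is coercive on $\mathcal{M}_0^+$. For the reference point we take $\bar u = 0$, which lies in $\mathcal{M}_0^+$ because $\varphi^0_x\geq 0$. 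The lemma also requires $\mathcal{A}_1(\bar u)\in (W^{1,q}_0)'$, i.e.\ a finite dual norm. This follows from Proposition~\ref{p and l}: the Young-type estimates~\eqref{bound tilde{F}_j}--\eqref{bound tilde{F}_m} with $w=0$ show that $\tilde F_j(t,x,0,\varepsilon)$ and $\tilde F_m(t,x,0,\varepsilon)$ lie in $L^{q'}(\Omega)$. Indeed, with $w=0$ the density $\varphi^0_x+\varepsilon\geq\varepsilon$ and $\varphi^0_t$ are bounded, so these functions are actually bounded. The term $\varphi^0$ in $\mathcal{A}_g$ is bounded as well, so H\"older's inequality gives $|\langle \mathcal{A}_1(0),\zeta\rangle|\leq C\|\zeta\|_{W^{1,q}}$.

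Coercivity of $\mathcal{A}_{reg}$ with $\bar u=0$ is a direct computation:
\[
\frac{\langle \mathcal{A}_{reg} w, w\rangle}{\|w\|_{W^{1,q}}} = \varepsilon\,\frac{\|w\|_{L^q}^q + \|Dw\|_{L^q}^q}{\|w\|_{W^{1,q}}} \geq \varepsilon\, c_q \|w\|_{W^{1,q}}^{q-1}\to\infty,
\]
where we use the equivalence of $\|w\|_{L^q}^q+\|Dw\|_{L^q}^q$ with $\|w\|_{W^{1,q}}^q$, with a constant depending only on $q$ and the choice of norm. Since $q\geq l+1>2$, we have $q-1>1$, so the right-hand side indeed tends to infinity. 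Combining this with Lemma~\ref{coercivity_lemma} gives coercivity of $A_\varepsilon$.

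The main point to check carefully is that Lemma~\ref{coercivity_lemma} applies on the convex set $\mathcal{M}_0^+$ rather than the whole space. Its proof only uses monotonicity of $\mathcal{A}_1$ between $u$ and $\bar u$, both in $\mathcal{K}=\mathcal{M}_0^+$, so this is fine. The other point is that monotonicity of $\mathcal{A}_{\tilde F}$ was derived from convexity of $\mathcal{J}$, and this must hold along segments in $\mathcal{M}_0^+$. There the argument $w_x+\varphi^0_x+\varepsilon\geq\varepsilon>0$ keeps $F$ smooth, and Assumption~\ref{assump:L-growth} together with $q\geq l+1$ justifies differentiating under the integral. If that justification were doubted, one could instead verify directly that $\langle \mathcal{A}_{\tilde F}w-\mathcal{A}_{\tilde F}0,w\rangle\geq 0$ by the pointwise convexity inequality of Lemma~\ref{lem:pers-mu}, applied twice as in the uniqueness proof.
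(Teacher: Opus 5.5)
Your proposal is correct and follows the paper's proof: you group the monotone part $\mathcal{A}_{\tilde F}+\mathcal{A}_g$, show that $\langle \mathcal{A}_{reg}w,w\rangle/\|w\|\geq c\,\varepsilon\|w\|^{q-1}\to\infty$ with reference point $\bar u=0\in\mathcal{M}_0^+$, and invoke Lemma~\ref{coercivity_lemma}. Your extra checks, namely that $\|\mathcal{A}_1(0)\|_{(W^{1,q}_0)'}<\infty$ and that the lemma only needs monotonicity on $\mathcal{M}_0^+$, are details the paper leaves implicit; also, only $q>1$ is needed for the divergence, not $q>2$.
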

\begin{proof}
    We use the equivalent $W^{1,q}_0(\Omega)$ norm
    $\|w\| := \left(\|w\|_{L^q(\Omega)}^q + \|Dw\|_{L^q(\Omega)}^q\right)^{1/q}$
    throughout this argument. Noting that $0\in \mathcal{M}^{+}_{0}$, we show that
    \begin{equation*}
        \frac{\left\langle A_{\varepsilon} w, w\right\rangle}{\left\lVert w \right\rVert} \rightarrow \infty, \quad \text{as} \quad \left\lVert w \right\rVert \rightarrow \infty.
    \end{equation*}
    Recall from~\eqref{Split_operator} that $A_\varepsilon = \mathcal{A}_{\tilde{F}} + \mathcal{A}_g + \mathcal{A}_{reg}$. By Proposition~\ref{Monotonicity theorem} and the fact that the sum of monotone operators is monotone, we find that $\mathcal{A}_{\tilde{F}} + \mathcal{A}_g$ is monotone.
    Observe that the operator $\mathcal{A}_{reg}$ is coercive. Indeed, by definition,
    \begin{equation*}
        \frac{\left\langle \mathcal{A}_{reg} w, w\right\rangle}{\left\lVert w \right\rVert} = \varepsilon \left[\int_{\Omega}\left( \left\lvert w\right\rvert^{q} + \left\lvert Dw\right\rvert^{q} \right) \dd \Omega\right]^{\frac{q-1}{q}} \rightarrow \infty, \quad \text{as} \quad \left\lVert w \right\rVert \rightarrow \infty.
    \end{equation*}
Therefore, by Lemma~\ref{coercivity_lemma}, $A_\varepsilon$ is coercive.
\end{proof}

\begin{proposition}\label{hemicontinuity}
    Suppose Assumptions~\ref{assump:H}--\ref{assump:L-growth} hold and the relation~\eqref{relation:q_and_l} is satisfied. Then, the operator $A_{\varepsilon}$ is hemicontinuous.
\end{proposition}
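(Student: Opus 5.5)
Fix $u,v\in\mathcal{M}_0^+$ and $w\in W^{1,q}_0(\Omega)$, and set $z_s=su+(1-s)v$ for $s\in[0,1]$. Since $\mathcal{M}_0^+$ is convex, $z_s\in\mathcal{M}_0^+$. We must show that $s\mapsto\langle A_\varepsilon z_s,w\rangle$ is continuous on $[0,1]$. Following the splitting~\eqref{Split_operator}, we treat the three parts separately, and for each we use the dominated convergence theorem. Take $s_n\to s$ in $[0,1]$.

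\textbf{Ranking and regularizing parts.} The ranking part is affine in $s$:
\[
\langle\mathcal{A}_g z_s,w\rangle=\int_\Omega (z_s+\varphi^0)w_x\,\dd\Omega ,
\]
so it is continuous. For the regularizing part, the integrands $|z_{s_n}|^{q-2}z_{s_n}w$ and $|Dz_{s_n}|^{q-2}Dz_{s_n}\cdot Dw$ converge pointwise a.e. They are dominated by $(|u|+|v|)^{q-1}|w|$ and $(|Du|+|Dv|)^{q-1}|Dw|$, respectively. By H\"older's inequality with exponents $q'$ and $q$, both dominating functions lie in $L^1(\Omega)$.

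\textbf{Perspective part.} Set $j_s=(z_s)_t+\varphi^0_t$ and $m_s=(z_s)_x+\varphi^0_x+\varepsilon$. Then $m_s\ge\varepsilon>0$ a.e., since $z_s\in\mathcal{M}_0^+$. By~\eqref{par. der. F} and~\eqref{eq:pers-derivatives}, the integrand is
\[
-D_vL\!\left(x,\tfrac{-j_s}{m_s}\right)w_t+\Big(L-D_vL\cdot\tfrac{-j_s}{m_s}\Big)\!\left(x,\tfrac{-j_s}{m_s}\right)w_x .
\]
By Remark~\ref{rem:Lagrangian}, $L$ and $D_vL$ are continuous, and $(j,m)\mapsto -j/m$ is continuous on $\{m\ge\varepsilon\}$. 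Since $j_{s_n}\to j_s$ and $m_{s_n}\to m_s$ pointwise (the maps are affine in $s$), the integrand converges a.e.

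For domination we use Assumption~\ref{assump:L-growth}, Remark~\ref{upp_bnd_L}, and the uniform bound $m_s\ge\varepsilon$. With $G:=|u_t|+|v_t|+|\varphi^0_t|$ we get
\[
|\tilde F_j|\le C(\varepsilon)\big(G^{l-1}+1\big),\qquad |\tilde F_m|\le C(\varepsilon)\big(G^{l}+1\big),
\]
uniformly in $s$; this is the same computation as in Proposition~\ref{p and l}. The dominating functions are therefore $C(\varepsilon)(G^{l-1}+1)|w_t|$ and $C(\varepsilon)(G^l+1)|w_x|$. By H\"older's inequality these are integrable provided $G^{l}\in L^{q'}$, that is, provided $lq'\le q$. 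This is exactly the condition $q\ge l+1$ in~\eqref{relation:q_and_l}, and $G^{l-1}\in L^{q'}$ follows in the same way.

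\textbf{Main obstacle.} The delicate step is this uniform-in-$s$ domination of the perspective part. It relies on the shift by $\varepsilon$, which keeps the denominator $m_s$ bounded away from zero independently of $s$, and on the exponent relation $q\ge l+1$ for integrability. With the domination in hand, dominated convergence gives continuity of each of the three terms, and hence $A_\varepsilon$ is hemicontinuous.
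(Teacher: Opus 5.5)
Your proposal is correct and takes the same route as the paper. You prove continuity of $s\mapsto\langle A_\varepsilon(su+(1-s)v),w\rangle$ by dominated convergence, using an $s$-independent integrable majorant built from the growth bounds of Proposition~\ref{p and l}; the shift by $\varepsilon$ keeps the mass argument bounded below, and $q\geq l+1$ gives integrability. What you add is the explicit pointwise convergence and the explicit dominating functions, which the paper's one-line proof leaves implicit.
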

\begin{proof}
    For any $\tilde{w} \in \mathcal{M}^{+}_{0}$, let $z = \tilde{w} - w$ be the direction of perturbation. We define
    \begin{multline*}
        I(s) := \left\langle A_{\varepsilon}(w + s z), \zeta\right\rangle = \int_{\Omega} \tilde{F}_j\left(t, x, w_{t} + s z_{t}, w_{x} + s z_{x} + \varepsilon\right)\zeta_{t} \\ \quad + \tilde{F}_m\left(t, x, w_{t} + s z_{t}, w_{x} + s z_{x} + \varepsilon\right)\zeta_{x} \\ + (w + s z + \varphi^0 ) \zeta_x + \varepsilon \left( |w+sz|^{q-2}(w+sz) \zeta + |D(w+sz)|^{q-2} D(w+sz) \cdot D\zeta \right) \dd \Omega.
    \end{multline*}
    Applying the same analysis as in the proof of Proposition~\ref{p and l}, we deduce that $I(s)$ is continuous on $[0,1]$ by the Dominated Convergence Theorem, since the terms in the integrand can be bounded by an integrable function independent of $s$. Thus, $A_{\varepsilon}$ is hemicontinuous.
\end{proof}

Combining the preceding properties, we establish the existence of approximate solutions.
\begin{corollary}\label{approx_existence}
    Suppose Assumptions~\ref{assump:H}--\ref{assump:L-growth} hold and the relation~\eqref{relation:q_and_l} is satisfied. For any $0 < \varepsilon < 1$, there exists a solution $\psi^{\varepsilon} \in \mathcal{M}_{0}^{+}$ to the regularized variational inequality
    \begin{equation}\label{variational inequality}
        \left\langle A_{\varepsilon}\psi^{\varepsilon}, w - \psi^{\varepsilon}\right\rangle \geq 0, \quad \text{for all } w \in \mathcal{M}^{+}_{0}.
    \end{equation}
\end{corollary}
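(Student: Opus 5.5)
This corollary follows by a direct application of Theorem~\ref{abstract existence theorem} with $X = W^{1,q}_0(\Omega)$, $X' = (W^{1,q}_0)'$, $\mathbb{K} = \mathcal{M}_0^+$ and $\mathcal{A} = A_\varepsilon$. We check the hypotheses in turn.

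First, $X$ is a reflexive Banach space, since $1<q<\infty$; indeed $q \geq l+1 > 2$ by~\eqref{relation:q_and_l}. The set $\mathcal{M}_0^+$ is non-empty, closed and convex by the proposition proved right after~\eqref{admissible_sets}. Proposition~\ref{p and l} shows that $A_\varepsilon$ maps $\mathcal{M}_0^+$ into $(W^{1,q}_0)'$. That proposition only bounds each pairing, but the bounds~\eqref{bound tilde{F}_j} and~\eqref{bound tilde{F}_m} also give linearity and continuity in $\zeta$. By H\"older's inequality, $\zeta \mapsto \int_\Omega \tilde F_j \zeta_t$ is bounded by $\|\tilde F_j\|_{L^{q'}}\|\zeta_t\|_{L^q}$, and $\tilde F_j, \tilde F_m \in L^{q'}(\Omega)$ by the growth estimates. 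The remaining two parts, $\mathcal{A}_g$ and $\mathcal{A}_{reg}$, are clearly bounded linear functionals of $\zeta$.

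Next, Proposition~\ref{Monotonicity theorem} gives monotonicity of $A_\varepsilon$ on $\mathcal{M}_0^+$. Proposition~\ref{Coercivity theorem} gives coercivity with reference point $\bar u = 0 \in \mathcal{M}_0^+$. Proposition~\ref{hemicontinuity} gives hemicontinuity.

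The only point needing care is the definition of hemicontinuity: the segment $su+(1-s)v$ must stay in $\mathbb{K}$, so that $A_\varepsilon$ is evaluated only on admissible functions. This holds by convexity of $\mathcal{M}_0^+$. Along the segment we have $w_x + \varphi^0_x + \varepsilon \geq \varepsilon > 0$, so the perspective derivatives are evaluated away from the singular set $m=0$.

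With all hypotheses verified, Theorem~\ref{abstract existence theorem} yields $\psi^\varepsilon \in \mathcal{M}_0^+$ satisfying~\eqref{variational inequality} for all $w \in \mathcal{M}_0^+$. This is exactly the claim.

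The main (mild) obstacle is confirming that $A_\varepsilon w$ genuinely lies in the dual space, rather than merely having finite pairings. The H\"older/growth argument above settles this. No further estimates are needed, because all the substantive work is contained in the preceding propositions.
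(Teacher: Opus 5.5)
Your proposal is correct and follows the paper's proof: check that $\mathcal{M}_0^+$ is a non-empty, closed, convex subset of the reflexive space $W^{1,q}_0(\Omega)$, cite Propositions~\ref{p and l}, \ref{Monotonicity theorem}, \ref{Coercivity theorem} and~\ref{hemicontinuity}, and apply Theorem~\ref{abstract existence theorem}. Your additions are correct elaborations of points the paper leaves implicit: the H\"older argument placing $A_\varepsilon w$ in $(W^{1,q}_0)'$ via $\tilde F_j,\tilde F_m\in L^{q'}$, and the observation that, by convexity, the hemicontinuity segment stays in $\mathcal{M}_0^+$ with $w_x+\varphi^0_x+\varepsilon\geq\varepsilon$.
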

\begin{proof}
    The set $\mathcal{M}_0^+$ is a closed, non-empty, and convex subset of the reflexive Banach space $W^{1,q}_0(\Omega)$. By Propositions~\ref{p and l},~\ref{Monotonicity theorem}, \ref{Coercivity theorem}, and~\ref{hemicontinuity}, the operator $A_\varepsilon$ is well-defined, monotone, coercive, and hemicontinuous. Thus, the existence of $\psi^\varepsilon$ follows directly from Theorem~\ref{abstract existence theorem}.
\end{proof}

\subsection{A Priori Estimates and Convergence}
To pass to the limit as $\varepsilon \to 0$, we first show that $\lVert\psi^{\varepsilon}\rVert_{W^{1,1}_{0}}$ is bounded independently of $\varepsilon$. We begin by recording a consequence of Assumptions~\ref{assump:H} and~\ref{assump:positivity}.

\begin{remark}[Finite Energy of the Reference Path]\label{finite_energy}
    Under Assumptions~\ref{assump:H} and~\ref{assump:positivity}, we observe that the function $(t,x) \mapsto F(x, -\varphi^{0}_{t}(t,x), \varphi^{0}_{x}(t,x))$ is continuous on the compact set $\bar{\Omega}$. Consequently, it is bounded and there exists $\tilde{c} > 0$ such that 
    \begin{equation*}
        \int_{\Omega} F \left( x, -\varphi^{0}_{t},\varphi^{0}_{x}\right) \dd \Omega \leq \tilde{c}.
    \end{equation*}
\end{remark}
\begin{proposition}\label{Estimates}
Suppose Assumptions~\ref{assump:H}--\ref{assump:low_bnd_L} hold and the relation~\eqref{relation:q_and_l} is satisfied. Let $\psi^\varepsilon$ be a solution to the variational inequality~\eqref{variational inequality} for a given $\varepsilon > 0$. Then there exists a constant $C>0$, independent of $\varepsilon$, such that
\begin{equation*}
    \int_{\Omega} \frac{\left\lvert \psi^{\varepsilon}_{t} + \varphi^{0}_{t}\right\rvert^{\kappa}}{\left\lvert \psi^{\varepsilon}_{x} + \varphi^{0}_{x} + \varepsilon \right\rvert^{\kappa-1}} \dd \Omega \leq C \quad \text{and} \quad \left\lVert D\psi^{\varepsilon} \right\rVert _{L^{1}(\Omega)} \leq C.
\end{equation*}
\end{proposition}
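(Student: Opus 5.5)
We test the variational inequality~\eqref{variational inequality} with $w=0\in\mathcal{M}_0^+$, which gives $\langle A_\varepsilon\psi^\varepsilon,\psi^\varepsilon\rangle\le 0$. We then split this quantity according to~\eqref{Split_operator} and bound each piece from below.

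\textbf{Step 1: the ranking and regularizing parts.} The ranking part satisfies
\[
\langle\mathcal{A}_g\psi^\varepsilon,\psi^\varepsilon\rangle=\int_\Omega \varphi^0\psi^\varepsilon_x\dd\Omega .
\]
The term $\int_\Omega\psi^\varepsilon\psi^\varepsilon_x$ vanishes because $\psi^\varepsilon$ has zero trace. Since $\varphi^0$ is bounded, this term is bounded in absolute value by $C\|\psi^\varepsilon_x\|_{L^1}$. The regularizing part is nonnegative, so we simply drop it.

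\textbf{Step 2: the perspective part.} Write $j=-(\psi^\varepsilon_t+\varphi^0_t)$ and $m=\psi^\varepsilon_x+\varphi^0_x+\varepsilon>0$. Using~\eqref{par. der. F}, the integrand of $\langle\mathcal{A}_{\tilde F}\psi^\varepsilon,\psi^\varepsilon\rangle$ equals
\[
F_j(x,j,m)\,(j_0-j)+F_m(x,j,m)\,(m_0-m),\qquad j_0=-\varphi^0_t,\quad m_0=\varphi^0_x+\varepsilon .
\]
By Euler's identity for the $1$-homogeneous function $F$, we have $F_j j+F_m m=F(x,j,m)$. Lemma~\ref{lem:pers-mu} gives $F_j j_0+F_m m_0\le F(x,j_0,m_0)$, but that is the wrong direction for a lower bound. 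So instead we use the convexity of $F$ at the point $(j,m)$ in the form
\[
F(x,j_0,m_0)\ge F(x,j,m)+F_j(j_0-j)+F_m(m_0-m).
\]
This yields $\langle\mathcal{A}_{\tilde F}\psi^\varepsilon,\psi^\varepsilon\rangle\ge \int_\Omega F(x,j,m)-\int_\Omega F(x,-\varphi^0_t,\varphi^0_x+\varepsilon)$.

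On the other hand, monotonicity alone gives only an upper bound on $\int F(x,j,m)$, and that upper bound is exactly what we need. Combining with Step 1, we get
\[
\int_\Omega F(x,j,m)\dd\Omega\le \int_\Omega F(x,-\varphi^0_t,\varphi^0_x+\varepsilon)\dd\Omega + C\|\psi^\varepsilon_x\|_{L^1}.
\]
By continuity on $\bar\Omega$, as in Remark~\ref{finite_energy}, the first term is bounded uniformly for $\varepsilon<1$.

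\textbf{Step 3: closing the estimate.} By Assumption~\ref{assump:low_bnd_L}, $F=mL(x,j/m)\ge \frac1c|j|^\kappa m^{1-\kappa}-cm$. Here $\int m\le \int|\psi^\varepsilon_x|+C$, so
\[
\int_\Omega \frac{|j|^\kappa}{m^{\kappa-1}}\dd\Omega\le C+C\|\psi^\varepsilon_x\|_{L^1}.
\]
The $L^1$ bound on $\psi^\varepsilon_x$ comes from the constraint $\psi^\varepsilon_x+\varphi^0_x\ge0$ together with zero boundary trace. The negative part satisfies $|\psi^\varepsilon_x|_-\le\varphi^0_x$, and $\int_0^1\psi^\varepsilon_x\dx=0$ for a.e.\ $t$. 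Hence $\int|\psi^\varepsilon_x|=2\int(\psi^\varepsilon_x)_-\le 2\int\varphi^0_x=2T$, uniformly in $\varepsilon$. This gives the first claimed bound.

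For $\psi^\varepsilon_t$, we apply Hölder:
\[
\int|j|\le\Bigl(\int |j|^\kappa m^{1-\kappa}\Bigr)^{1/\kappa}\Bigl(\int m\Bigr)^{(\kappa-1)/\kappa}\le C.
\]
Therefore $\|\psi^\varepsilon_t\|_{L^1}\le C+\|\varphi^0_t\|_{L^1}$, and $\|D\psi^\varepsilon\|_{L^1}\le C$.

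The main obstacle is the sign bookkeeping in Step 2: one must check that convexity of $F$, together with the test function $w=0$, yields an upper bound on the energy $\int F$ rather than a lower bound. Care is also needed with the $\varepsilon$-shift in $m_0$, since $F(x,\cdot,\varphi^0_x+\varepsilon)$ must stay uniformly bounded; this holds because $\varphi^0_x>0$ is continuous on $\bar\Omega$.
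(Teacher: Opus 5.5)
Your route is the paper's: test with $w=0$, drop the $q$-Laplacian term, control the ranking term, use convexity of $F$ to bound $\int_\Omega M\,L(x,J/M)$ by the reference energy, then finish with Assumption~\ref{assump:low_bnd_L}, Hölder, and the mass identity $\int_0^1(\psi^\varepsilon_x+\varphi^0_x)\dx=1$. Two differences are harmless. You bound the ranking term crudely by $\|\psi^\varepsilon_x\|_{L^1}\le 2T$, where the paper gets $-T/2$. You use Hölder where the paper uses Young.

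However, Step 2 as written contains two sign errors that cancel. Set $J=-(\psi^\varepsilon_t+\varphi^0_t)$ and $M=\psi^\varepsilon_x+\varphi^0_x+\varepsilon$. Then $\tilde F_j=-F_j(x,J,M)$, $\psi^\varepsilon_t=j_0-J$ and $\psi^\varepsilon_x=M-m_0$, so the integrand is
\[
-F_j\,\psi^\varepsilon_t+F_m\,\psi^\varepsilon_x=F_j\,(J-j_0)+F_m\,(M-m_0).
\]
This is the negative of what you wrote.

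The supporting-hyperplane inequality you display gives an upper bound on $F_j(j_0-J)+F_m(m_0-M)$. Combined with your integrand identity, it would only yield the useless
\[
\langle\mathcal{A}_{\tilde F}\psi^\varepsilon,\psi^\varepsilon\rangle\le\int_\Omega F(x,j_0,m_0)-\int_\Omega F(x,J,M).
\]
So the lower bound you state does not follow from your displayed lines, even though it is true.

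With the correct sign, Euler's identity writes the integrand as $F(x,J,M)-\bigl(F_j\,j_0+F_m\,m_0\bigr)$. Lemma~\ref{lem:pers-mu} then points in exactly the right direction, contrary to your remark. This is precisely how the paper closes Step 2: it adds $\int_\Omega\tilde F_j\varphi^0_t+\tilde F_m(\varphi^0_x+\varepsilon)$ to both sides and applies the lemma to the right-hand side. Once you fix the sign of the integrand, the rest of your argument goes through, including the uniformity in $\varepsilon$ and the $L^1$ bounds.
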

\begin{proof}
    From the variational inequality~\eqref{variational inequality}, we have
    \begin{equation*}
        \left\langle A_{\varepsilon}\psi^{\varepsilon},\psi^{\varepsilon} \right\rangle \leq 0,
    \end{equation*}
    by setting the test function $w = 0$. That is, we have
   \begin{equation*}
        \int_{\Omega} \tilde{F}_j \psi^{\varepsilon}_{t} + \tilde{F}_m \psi^{\varepsilon}_{x} + \left(\psi^{\varepsilon} + \varphi^{0}\right)\psi^{\varepsilon}_{x}  \dd \Omega+ \varepsilon \left\lVert \psi^{\varepsilon} \right\rVert_{W^{1,q}_{0}}^{q} \leq 0.
   \end{equation*}
 We drop the non-negative regularization term $\varepsilon \left\lVert \psi^{\varepsilon} \right\rVert _{W^{1,q}_{0}}^{q}$ from the inequality.

\noindent\textbf{Step 1: Lower bound on the potential term.}
   We analyze the term involving the potential. We split the integral as
          \begin{align*}
              \int_{\Omega} \left(\psi^{\varepsilon} + \varphi^{0}\right)\psi^{\varepsilon}_{x} \dd \Omega = \int_{\Omega} \psi^{\varepsilon}\psi^{\varepsilon}_{x} \dd \Omega + \int_{\Omega}  \varphi^{0} \psi^{\varepsilon}_{x}\dd \Omega.
          \end{align*}
          By the Fundamental Theorem of Calculus and the boundary conditions, the first term vanishes
        \begin{align*}
                \int_{\Omega} \psi^{\varepsilon}\psi^{\varepsilon}_{x} \dd \Omega & =\int_{\Omega} \left(\frac{\left(\psi^{\varepsilon}\right)^2}{2}\right)_{x} \dd \Omega \\
                &= \int_{0}^{T} \left(\frac{\left(\psi^{\varepsilon}\right)^2}{2} \left(t,1\right)\right) - \left(\frac{\left(\psi^{\varepsilon}\right)^2}{2}(t,0)\right)\dt \\
                &= 0.
            \end{align*}
            For the second term, we add and subtract $\int_{\Omega} \varphi^0\varphi^0_x \dd \Omega$. Since $\psi^\varepsilon \in \mathcal{M}_0^+$, we have $\psi^\varepsilon_x + \varphi^0_x \geq 0$. Moreover, since $m_0, m_T > 0$, the interpolating potential satisfies $\varphi^0 \geq 0$. Thus, $\int_{\Omega} (\psi^\varepsilon_x + \varphi^0_x)\varphi^0 \dd \Omega \geq 0$, and we obtain
            \begin{align*}
                \int_{\Omega} \varphi^{0} \psi^{\varepsilon}_{x} +  \varphi^0\varphi^0_x -  \varphi^0\varphi^0_x\dd \Omega &= \int_{\Omega} (\psi^\varepsilon_x + \varphi^0_x)\varphi^0 \dd \Omega - \int_{\Omega}  \left(\frac{\left(\varphi^0\right)^2}{2}\right)_{x} \dd \Omega \\
                &\geq -\int_{0}^{T} \left[\frac{\left(\varphi^0\right)^2}{2} \left(t,1\right) - \frac{\left(\varphi^0\right)^2}{2}(t,0)\right]\dt \\
                &= -\frac{T}{2}.
            \end{align*}
        Combining these gives
            \begin{align*}
                \int_{\Omega} \left(\psi^{\varepsilon} + \varphi^{0}\right)\psi^{\varepsilon}_{x} \dd \Omega \geq -\frac{T}{2}.
            \end{align*}

\noindent\textbf{Step 2: Energy estimate.}
   Substituting this lower bound back into the main inequality gives
    \begin{equation*}
        \int_{\Omega} \tilde{F}_j \psi^{\varepsilon}_{t} + \tilde{F}_m \psi^{\varepsilon}_{x} \dd \Omega \leq \frac{T}{2}.
    \end{equation*}
    Adding $\int_{\Omega} \tilde{F}_j\varphi^{0}_{t}\dd \Omega$ and $\int_{\Omega} \tilde{F}_m \left(\varphi^{0}_{x}+\varepsilon\right)\dd \Omega$ to both sides yields
    \begin{align*}
        \int_{\Omega} \tilde{F}_j \left(\psi^{\varepsilon}_{t} + \varphi^{0}_{t}\right) + \tilde{F}_m \left( \psi^{\varepsilon}_{x} + \varphi^{0}_{x} + \varepsilon \right)\dd \Omega  &\leq \int_{\Omega} \left[ \tilde{F}_j \varphi^{0}_{t} + \tilde{F}_m \left(\varphi^{0}_{x} + \varepsilon \right) \right]\dd \Omega +\frac{T}{2} \\
        &\leq C\left(\tilde{c},T\right),
    \end{align*}
    where the second inequality follows from~\eqref{par. der. F}, Lemma~\ref{lem:pers-mu}, and Remark~\ref{finite_energy}. Note that because $\varphi^0_x(t,x) \geq \min_{y} \{m_0(y), m_T(y)\} > 0$, the mass argument in the perspective function remains strictly bounded away from zero, ensuring the bound $C(\tilde{c},T)$ is uniform in $\varepsilon$.

    We factor out $\psi^{\varepsilon}_{x} + \varphi^{0}_{x} + \varepsilon$ in the integrand of the left-hand side, to get
    \begin{equation*}
        \int_{\Omega} \left(\psi^{\varepsilon}_{x} + \varphi^{0}_{x} + \varepsilon\right) \left[ \tilde{F}_j \cdot \left(\frac{\psi^{\varepsilon}_{t} + \varphi^{0}_{t}}{\psi^{\varepsilon}_{x} + \varphi^{0}_{x} + \varepsilon}\right) + \tilde{F}_m \right] \dd \Omega \leq C\left(\tilde{c},T\right).
    \end{equation*}
    Observe that from~\eqref{par. der. F} and~\eqref{eq:pers-derivatives}, we have $\tilde{F}_j \cdot \left(\frac{\psi^{\varepsilon}_{t} + \varphi^{0}_{t}}{\psi^{\varepsilon}_{x} + \varphi^{0}_{x} + \varepsilon}\right) + \tilde{F}_m = L\left(x,-\frac{\psi^{\varepsilon}_{t} + \varphi^{0}_{t}}{\psi^{\varepsilon}_{x} + \varphi^{0}_{x} + \varepsilon}\right)$. Therefore, we bound the LHS from below by using Assumption~\ref{assump:low_bnd_L}, to get
    \begin{equation*}
        \int_{\Omega} \left(\psi^{\varepsilon}_{x} + \varphi^{0}_{x} + \varepsilon\right)\left[\frac{1}{c} \left\lvert\frac{\psi^{\varepsilon}_{t} + \varphi^{0}_{t}}{\psi^{\varepsilon}_{x} + \varphi^{0}_{x} + \varepsilon}\right\rvert^{\kappa} -c\right] \dd \Omega \leq C\left(\tilde{c},T\right).
    \end{equation*}

Rearranging terms, we conclude
    \begin{align*}
        \int_{\Omega} \frac{\left\lvert \psi^{\varepsilon}_{t} + \varphi^{0}_{t}\right\rvert^{\kappa}}{\left\lvert \psi^{\varepsilon}_{x} + \varphi^{0}_{x} + \varepsilon \right\rvert^{\kappa -1}} \dd \Omega &\leq C\left(\tilde{c},T\right) + c\int_{\Omega} \left(\psi^{\varepsilon}_{x} + \varphi^{0}_{x} + \varepsilon\right) \dd \Omega \\
        &\leq C\left(c,\tilde{c},T\right),
    \end{align*}
    since $\int_{\Omega} \left(\psi^{\varepsilon}_{x} + \varphi^{0}_{x} + \varepsilon\right) \dd \Omega \leq 2T$.

\noindent\textbf{Step 3: $L^1$ bounds.}
    The bound on $\|\psi_{x}^\varepsilon\|_{L^1}$ follows immediately from
    \begin{equation*}
        \int_{\Omega} \left\lvert \psi^{\varepsilon}_{x} \right\rvert \dd \Omega \leq \int_{\Omega} \left\lvert \psi^{\varepsilon}_{x} + \varphi^{0}_{x} \right\rvert \dd \Omega + \int_{\Omega} \left\lvert \varphi^{0}_{x} \right\rvert \dd \Omega \leq 2T.
    \end{equation*}
   For the time derivative, we use Young's inequality together with the triangle inequality

    \begin{align*}
        \int_{\Omega}\left\lvert \psi^{\varepsilon}_{t} \right\rvert \dd \Omega &\leq \int_{\Omega} \left[ \left\lvert \varphi^{0}_{t} \right\rvert + \left\lvert \psi^{\varepsilon}_{t} + \varphi^{0}_{t} \right\rvert \cdot \frac {\left\lvert \psi^{\varepsilon}_{x}+ \varphi^{0}_{x} + \varepsilon \right\rvert^{\frac{\kappa-1}{\kappa}}}{\left\lvert \psi^{\varepsilon}_{x} + \varphi^{0}_{x} + \varepsilon \right\rvert^{\frac{\kappa-1}{\kappa}}} \right] \dd \Omega \\
        &\leq \left\lVert \varphi^{0}_{t} \right\rVert_{L^{\infty}} T +\frac{1}{\kappa} \int_{\Omega} \frac{\left\lvert \psi^{\varepsilon}_{t} + \varphi^{0}_{t}\right\rvert ^{\kappa}}{\left\lvert \psi^{\varepsilon}_{x} + \varphi^{0}_{x} + \varepsilon\right\rvert^{\kappa-1}} \dd \Omega + \frac{\kappa-1}{\kappa} \int_{\Omega} \left\lvert \psi^{\varepsilon}_{x} + \varphi^{0}_{x} + \varepsilon \right\rvert \dd \Omega.
    \end{align*}
    Both integrals on the RHS are bounded by the previous steps. Thus, we get
    \begin{equation*}
        \left\lVert \psi^{\varepsilon}_{t} \right\rVert _{L^{1}} \leq C\left(\kappa, c, \tilde{c},T\right).\qedhere
    \end{equation*}
\end{proof}

\begin{corollary}[Compactness]\label{cor:compactness}
    Suppose Assumptions~\ref{assump:H}--\ref{assump:low_bnd_L} hold and the relation~\eqref{relation:q_and_l} is satisfied. Let $\{\psi^\varepsilon\}$ be a sequence of solutions to the variational inequality~\eqref{variational inequality}. Then, there exist a subsequence (still denoted $\psi^\varepsilon$) and a limit function $\psi \in BV(\Omega)$ such that $\psi^\varepsilon \to \psi$ strongly in $L^1(\Omega)$ and $D\psi^\varepsilon \cws D\psi$ in the sense of measures as $\varepsilon \to 0$. Moreover, $\psi$ satisfies $D_x\psi + \varphi_x^0 \geq 0$.
\end{corollary}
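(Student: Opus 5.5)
The plan is to combine the uniform estimates of Proposition~\ref{Estimates} with a Poincaré-type bound, to get a uniform $BV(\Omega)$ bound on $\psi^\varepsilon$. Then Theorem~\ref{compactness} gives the convergence, and the constraint $D_x\psi+\varphi^0_x\geq 0$ passes to the limit by testing against nonnegative functions.

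\textbf{Step 1: uniform $BV$ bound.} Proposition~\ref{Estimates} gives $\|D\psi^\varepsilon\|_{L^1(\Omega)}\leq C$ with $C$ independent of $\varepsilon$. We still need an $L^1$ bound on $\psi^\varepsilon$ itself. Since $\psi^\varepsilon\in W^{1,q}_0(\Omega)\subset W^{1,1}_0(\Omega)$ and $\Omega=(0,T)\times(0,1)$ is bounded, the Poincaré inequality in $W^{1,1}_0$ applies. Concretely, for a.e.\ $t$ we have $\psi^\varepsilon(t,x)=\int_0^x\psi^\varepsilon_x(t,y)\dy$, using the zero trace at $x=0$. This gives $\|\psi^\varepsilon\|_{L^1(\Omega)}\leq\|\psi^\varepsilon_x\|_{L^1(\Omega)}\leq 2T$. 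Because $W^{1,1}$ functions lie in $BV$ with $|D\psi^\varepsilon|(\Omega)=\|D\psi^\varepsilon\|_{L^1}$, we conclude $\sup_\varepsilon\|\psi^\varepsilon\|_{BV(\Omega)}<\infty$.

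\textbf{Step 2: extraction.} Take any sequence $\varepsilon_k\to0$ and apply Theorem~\ref{compactness}; $\Omega$ is a rectangle, so it has Lipschitz boundary. This yields a subsequence and a limit $\psi\in BV(\Omega)$ with $\psi^{\varepsilon_k}\to\psi$ in $L^1(\Omega)$ and $D\psi^{\varepsilon_k}\cws D\psi$ weakly-$*$ as measures on $\Omega$.

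\textbf{Step 3: the sign constraint.} This is the only step needing care, because weak-$*$ convergence is tested only against $\mathcal{C}_c(\Omega)$ functions, so positivity has to be checked in that sense. Fix $\phi\in\mathcal{C}_c^\infty(\Omega)$ with $\phi\geq0$. Since $\psi^{\varepsilon_k}\in\mathcal{M}^+_0$, we have $\psi^{\varepsilon_k}_x+\varphi^0_x\geq0$ a.e., and therefore
\[
\int_\Omega \phi\,\bigl(\psi^{\varepsilon_k}_x+\varphi^0_x\bigr)\dd\Omega\geq0 .
\]
To pass to the limit in the first part, write $\int_\Omega\phi\,\psi^{\varepsilon_k}_x\dd\Omega=-\int_\Omega\phi_x\psi^{\varepsilon_k}\dd\Omega$. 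This converges to $-\int_\Omega\phi_x\psi\dd\Omega=\int_\Omega\phi\, D_x\psi$ by the $L^1$ convergence. (Weak-$*$ convergence of $D_x\psi^{\varepsilon_k}$ gives the same limit directly.) Hence $\int_\Omega\phi\,(D_x\psi+\varphi^0_x\dd\Omega)\geq0$ for every nonnegative $\phi\in\mathcal{C}^\infty_c(\Omega)$. By density this extends to nonnegative $\phi\in\mathcal{C}_c(\Omega)$. A Radon measure that is nonnegative on all nonnegative compactly supported continuous functions is a nonnegative measure, so $D_x\psi+\varphi^0_x\geq0$ in the sense of Radon measures on $\Omega$.

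Nothing here is a serious obstacle. The only points to watch are the $L^1$ bound on $\psi^\varepsilon$ itself (which follows from the zero lateral trace) and interpreting the inequality as one between measures on the open set $\Omega$ rather than on $\bar\Omega$.
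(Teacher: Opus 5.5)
Your proposal is correct and follows the paper's proof. The uniform estimate of Proposition~\ref{Estimates} gives a $BV$ bound, Theorem~\ref{compactness} extracts the subsequence, and the sign condition passes to the weak-$*$ limit of the nonnegative measures $D_x\psi^\varepsilon+\varphi^0_x\dd x$. Your Poincaré-type $L^1$ bound via the zero lateral trace, and your testing against nonnegative $\phi$, simply make explicit what the paper leaves implicit in asserting a uniform $W^{1,1}_0$ bound and positivity of the limit.
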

\begin{proof}
    We can see from the estimates in Proposition~\ref{Estimates} that any solution $\psi^{\varepsilon}$ to the variational inequality~\eqref{variational inequality} is uniformly bounded (with respect to $\varepsilon$) in $W^{1,1}_0\left( \Omega\right)$ and hence $\{\psi^\varepsilon\}$ is uniformly bounded in $BV(\Omega)$. Therefore, by Theorem~\ref{compactness}, we can extract a subsequence (still denoted $\psi^\varepsilon$) such that $\psi^\varepsilon \to \psi$ strongly in $L^1(\Omega)$ and $D\psi^\varepsilon \cws D\psi$ as measures. Finally, since $\psi^\varepsilon \in \mathcal{M}_0^+$, we have $\psi^\varepsilon_x + \varphi_x^0 \geq 0$ almost everywhere, 
which means $D_x\psi^\varepsilon + \varphi_x^0\dd x \geq 0$ as Radon measures. Since $D_x\psi^\varepsilon \cws D_x\psi$ 
in the sense of measures, and the weak$^*$ limit of non-negative Radon measures is non-negative, we conclude 
$D_x\psi + \varphi_x^0\dd x \geq 0$, i.e., $D_x\psi + \varphi_x^0 \geq 0$ in the sense of measures.
\end{proof}

Weak convergence in $BV$ does not preserve traces. Therefore, the 
boundary values of the limit function require separate analysis. 
The spatial boundary conditions are enforced through the variational inequality; for the temporal traces, the next proposition shows these are attained in the strong $L^1$ sense.

\begin{proposition}[Trace Attainment]\label{lem:temporal_trace}
    Suppose Assumptions~\ref{assump:H}--\ref{assump:low_bnd_L} hold and the relation~\eqref{relation:q_and_l} is satisfied. 
    Let $\psi^\varepsilon$ be the sequence of approximate solutions given by Corollary~\ref{approx_existence} and $\psi$ be the limit function given by Corollary~\ref{cor:compactness}. Then, in the sense of traces in $BV(\Omega)$, $\psi$ satisfies the boundary inequalities $\psi(t,0) \geq 0$ and $\psi(t,1) \leq 0$ for almost every $t \in (0,T)$, and the initial and terminal conditions $\psi(0, x) = \psi(T, x) = 0$ for almost every $x \in (0,1)$.
\end{proposition}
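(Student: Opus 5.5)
We will treat the two kinds of boundary separately. The lateral inequalities come from the monotonicity constraint $D_x\psi+\varphi^0_x\geq 0$ combined with a limiting Gauss--Green identity. The temporal equalities come from the uniform weighted energy bound of Proposition~\ref{Estimates}, which gives an equi-integrability-type control of $\psi^\varepsilon_t$ near $t=0$ and $t=T$.

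\textbf{Lateral boundary.} Set $\varphi=\psi+\varphi^0$. For $\psi^\varepsilon\in W^{1,q}_0$ we have $\varphi^\varepsilon(t,0)=0$. For a.e.\ $t$ and a.e.\ $x$, integrating $\varphi^\varepsilon_x\geq 0$ from $0$ to $x$ gives $\varphi^\varepsilon(t,x)\geq 0$. Likewise, integrating from $x$ to $1$ gives $\varphi^\varepsilon(t,x)\leq 1$. Hence $0\leq \varphi^\varepsilon\leq 1$ a.e., and by $L^1$ convergence also $0\leq\varphi\leq 1$ a.e.\ in $\Omega$. BV traces are obtained as limits of averages over small neighbourhoods of the boundary; equivalently, the inner trace on a flat boundary piece is the $L^1$ limit of $\varphi(\cdot,\delta)$ as $\delta\to0$. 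An a.e.\ bound on $\varphi$ therefore passes to the trace, so $\operatorname{tr}\varphi(t,0)\geq 0$ and $\operatorname{tr}\varphi(t,1)\leq 1$. Since $\varphi^0(t,0)=0$ and $\varphi^0(t,1)=1$, this yields $\psi(t,0)\geq0$ and $\psi(t,1)\leq 0$.

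\textbf{Temporal boundary.} This is the main step, since weak-$*$ convergence in $BV$ can lose mass on $\{t=0\}\cup\{t=T\}$. We handle $t=0$; the case $t=T$ is symmetric. For $\delta>0$ small, write $\psi^\varepsilon(\delta,x)=\int_0^\delta \psi^\varepsilon_t(s,x)\ds$, which is valid since $\psi^\varepsilon(0,\cdot)=0$. We aim to show
\[
\sup_\varepsilon \int_0^\delta\!\!\int_0^1 |\psi^\varepsilon_t|\dx\ds \to 0 \quad\text{as } \delta\to0 .
\]
For this we use Hölder's inequality with exponents $\kappa$ and $\kappa'$ on $\psi^\varepsilon_t+\varphi^0_t$, written as the product of $|\psi^\varepsilon_t+\varphi^0_t|(\psi^\varepsilon_x+\varphi^0_x+\varepsilon)^{-(\kappa-1)/\kappa}$ and $(\psi^\varepsilon_x+\varphi^0_x+\varepsilon)^{(\kappa-1)/\kappa}$. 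This gives
\[
\int_{(0,\delta)\times(0,1)}|\psi^\varepsilon_t+\varphi^0_t| \leq C^{1/\kappa}\Bigl(\int_{(0,\delta)\times(0,1)}(\psi^\varepsilon_x+\varphi^0_x+\varepsilon)\Bigr)^{(\kappa-1)/\kappa}.
\]
The last integral equals $\int_0^\delta(\varphi^\varepsilon(s,1)-\varphi^\varepsilon(s,0)+\varepsilon)\ds=\delta(1+\varepsilon)\leq 2\delta$, using the exact lateral values of $\varphi^\varepsilon$. Adding $\delta\|\varphi^0_t\|_\infty$, we obtain the uniform bound $C\delta^{(\kappa-1)/\kappa}$. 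Since $\kappa>1$, this tends to zero.

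\textbf{Passing to the limit.} From the previous step, $\|\psi^\varepsilon(s,\cdot)\|_{L^1(0,1)}\leq C\delta^{(\kappa-1)/\kappa}$ for a.e.\ $s<\delta$, uniformly in $\varepsilon$. Averaging over $s\in(0,\delta)$ and using $L^1(\Omega)$ convergence, we get $\frac1\delta\int_0^\delta\|\psi(s,\cdot)\|_{L^1}\ds\leq C\delta^{(\kappa-1)/\kappa}$. The BV trace at $t=0$ is the $L^1(0,1)$ limit of $\frac1\delta\int_0^\delta\psi(s,\cdot)\ds$; alternatively, one can use the bound $\|\operatorname{tr}\psi - \frac1\delta\int_0^\delta\psi(s,\cdot)\ds\|_{L^1}\leq |D_t\psi|((0,\delta)\times(0,1))\to0$. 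Letting $\delta\to0$ then gives $\psi(0,\cdot)=0$ a.e. The only delicate point is justifying this trace characterization, which we take from the standard $BV$ theory in \cite{evansgariepy2015}.
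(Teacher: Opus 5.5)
Your proposal is correct and follows essentially the paper's route. The lateral inequalities come from the a.e.\ one-sided bounds forced by $\psi^\varepsilon_x+\varphi^0_x\geq 0$ together with zero lateral traces. The temporal traces come from the same H\"older estimate against the weight $\psi^\varepsilon_x+\varphi^0_x+\varepsilon$, whose integral over a strip of width $\delta$ is exactly $\delta(1+\varepsilon)$.

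The only difference is the final identification step. You bound $\|\psi^\varepsilon(s,\cdot)\|_{L^1}$ directly and then use $\bigl\|\tr\psi-\frac1\delta\int_0^\delta\psi(s,\cdot)\ds\bigr\|_{L^1}\leq |D_t\psi|((0,\delta)\times(0,1))\to 0$. This is slightly cleaner than the paper's version, which transfers the quantitative bound to the limit via lower semicontinuity, compares $\int_0^x$-antiderivatives through Gauss--Green on rectangles, and then differentiates in $x$.
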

\begin{proof}
We first prove the spatial boundary inequalities.
Let $C = \|\varphi^0_x\|_{L^\infty(\Omega)}$. Since $\psi^\varepsilon \in \mathcal{M}_0^+$, for almost every $t \in (0,T)$ and $x \in (0,1)$, we have $ \psi_{x}^\varepsilon(t,x) \geq -C$. Furthermore, $\psi^\varepsilon(t,0) = 0$ for a.e. $t$.

By the Fundamental Theorem of Calculus with respect to the spatial variable $x$, for a.e.~$(t,x)$,
\begin{equation*}
    \psi^\varepsilon(t,x) = \int_0^x \psi^\varepsilon_x(t,y) \dy \geq -Cx.
\end{equation*}
Passing to the limit $\varepsilon \to 0$ (recalling $\psi^\varepsilon \to \psi$ in $L^1$), we have $\psi(t,x) \geq -Cx$ a.e. Taking the spatial trace (in $BV(\Omega)$) as $x \to 0$, we conclude $\psi(t,0) \geq 0$.
A symmetric argument integrating from $x$ to $1$ yields $\psi(t,1) \leq 0$, which concludes this part.

Now, for the temporal boundary conditions.
    We provide the proof for the terminal time $t=T$; the argument for $t=0$ is analogous. Consider a small time interval $[T-\delta, T]$. By Hölder's inequality with conjugates $\kappa$ and $\kappa' = \frac{\kappa}{\kappa-1}$, we estimate the $L^1$ norm of the time derivative $\psi_t^\varepsilon + \varphi^{0}_{t}$
    \begin{equation}
    \begin{aligned}
        \int_{T-\delta}^T \int_0^1 &|\psi_t^\varepsilon + \varphi^{0}_{t}| \dx\dt \\
        &= \int_{T-\delta}^T \int_0^1 \frac{|\psi_t^\varepsilon + \varphi^{0}_{t}|}{(\psi_x^\varepsilon + \varphi^{0}_{x} + \varepsilon)^{\frac{\kappa-1}{\kappa}}} (\psi_x^\varepsilon + \varphi^{0}_{x} + \varepsilon)^{\frac{\kappa-1}{\kappa}} \dx\dt \\
        &\leq \biggl( \int_{T-\delta}^T \int_0^1 \frac{|\psi_t^\varepsilon + \varphi^{0}_{t}|^\kappa}{(\psi_x^\varepsilon + \varphi^{0}_{x} + \varepsilon)^{\kappa-1}} \dx\dt \biggr)^{1/\kappa}
        \biggl( \int_{T-\delta}^T \int_0^1 (\psi_x^\varepsilon + \varphi^{0}_{x} + \varepsilon) \dx\dt \biggr)^{\frac{\kappa-1}{\kappa}}.
    \end{aligned}
    \end{equation}
    Using the energy bound from Proposition~\ref{Estimates}, the first integral is bounded by $C^{1/\kappa}$. The second integral represents the mass over the strip $[T-\delta, T] \times [0,1]$. Since $(\psi^\varepsilon + \varphi^{0})$ is monotone in $x$ with $(\psi^\varepsilon + \varphi^{0})(t, 1) - (\psi^\varepsilon + \varphi^{0})(t, 0) = 1$, we have $\int_0^1 (\psi_x^\varepsilon + \varphi^{0}_{x} + \varepsilon) \dx = 1 + \varepsilon$. Thus,
    \begin{equation}\label{eq:holder_time}
        \int_{T-\delta}^T \int_0^1 |\psi_t^\varepsilon + \varphi^{0}_{t}| \dx\dt \leq C^{1/\kappa} \bigl( \delta (1+\varepsilon) \bigr)^{\frac{\kappa-1}{\kappa}}.
    \end{equation}
    Passing to the limit as $\varepsilon \to 0$, and using the lower semicontinuity of the total variation, we obtain for the limit $\psi$
    \begin{equation}\label{eq:holder_time_lim}
        \left|D_t \psi + \varphi^{0}_{t} \dx\dt\right|\big([T-\delta,T]\times[0,1]\big) \leq C^{1/\kappa} \delta^{\frac{\kappa-1}{\kappa}}.
    \end{equation}
    We apply the Gauss-Green formula (Theorem~\ref{thm:gauss-green-bv}) on the rectangular domain $[t, T] \times [0, x]$ with the test function $\phi=1$ to obtain
\begin{equation*}
\int_0^x (\psi^\varepsilon + \varphi^{0})(T, y) \dy - \int_0^x (\psi^\varepsilon + \varphi^{0})(t, y)\dy = \int_t^T \int_0^x  (\psi_t^\varepsilon + \varphi_t^{0})(s, y) \dy \dd s,
\end{equation*}
and
\begin{equation*}
\int_0^x (\psi + \varphi^{0})(T, y) \dd y - \int_0^x (\psi + \varphi^{0})(t, y) \dd y = D_t\psi\big((t,T)\times(0,x)\big) + \int_t^T\int_0^x \varphi_t^{0}(s, y)\dy\dd s.
\end{equation*}
    Integrating the first equation over $t \in [T-\delta, T]$, taking absolute values, using~\eqref{eq:holder_time}, and using Fubini's theorem  (noting that $\psi^\varepsilon(T, \cdot) = 0$ in the sense of traces) gives   
    \begin{equation*}
        \biggl| \delta \int_0^x \varphi^0(T, y) \dd y - \int_{T-\delta}^T \int_0^x (\psi^\varepsilon + \varphi^{0})(t, y) \dd y\dt \biggr|
        \leq \delta \cdot C^{1/\kappa} \bigl(\delta(1+\varepsilon)\bigr)^{\frac{\kappa-1}{\kappa}}.
    \end{equation*}
    Passing $\varepsilon \to 0$ (recalling $\psi^\varepsilon \to \psi$ in $L^1$):
    \begin{equation}\label{Fub_seq}
        \biggl|\delta \int_0^x \varphi^0(T, y) \dy - \int_{T-\delta}^T \int_0^x (\psi + \varphi^{0})(t, y) \dy\dt \biggr| \leq \delta C^{1/\kappa} \delta^{\frac{\kappa-1}{\kappa}}.
    \end{equation}
    Similarly, repeating the process for the limit function $\psi$ directly from \eqref{eq:holder_time_lim}
    \begin{equation}\label{Fub_lim}
        \biggl| \delta \int_0^x (\psi+\varphi^0)(T, y) \dy - \int_{T-\delta}^T \int_0^x (\psi + \varphi^{0})(t, y) \dy\dt \biggr|
        \leq \delta C^{1/\kappa}\delta^{\frac{\kappa-1}{\kappa}}.
    \end{equation}
    By the triangle inequality, subtracting the two estimates \eqref{Fub_seq} and \eqref{Fub_lim} yields
    \begin{equation*}
        \delta \left\lvert\int_0^x(\psi+\varphi^0)(T, y) \dd y- \int_0^x \varphi^0(T, y)\dd y\right\rvert  \leq 2\delta C^{1/\kappa}\delta^{\frac{\kappa-1}{\kappa}}.
    \end{equation*}
    Dividing by $\delta$ and letting $\delta \to 0$ (since $1 - \frac{1}{\kappa} > 0$), the RHS vanishes. Thus, for any $x \in [0,1]$
    \begin{equation*}
        \int_0^x (\psi+\varphi^0)(T, y) \dy = \int_0^x \varphi^0(T, y) \dy.
    \end{equation*}
    Differentiating with respect to $x$, we conclude $(\psi+\varphi^0)(T, x) = \varphi^0(T, x)$ for almost every $x$. That is, $\psi(T,x) = 0$ for almost every $x \in (0,1)$.
\end{proof}

\begin{proposition}\label{existence of psi}
    Suppose Assumptions~\ref{assump:H}--\ref{assump:low_bnd_L} hold and the relation~\eqref{relation:q_and_l} is satisfied. Let $\psi^\varepsilon$ be the sequence of approximate solutions given by Corollary~\ref{approx_existence} and $\psi$ be the limit function given by Corollary~\ref{cor:compactness}. Then, for every test function $\tilde{\eta} \in \mathcal{S}^{+}$, given in~\eqref{admissible_sets}, the following variational inequality holds
\begin{multline*}
\int_{\Omega} \left[ \tilde{F}_j(t, x,\tilde{\eta}_t, \tilde{\eta}_x)\tilde{\eta}_t + \tilde{F}_m(t, x,\tilde{\eta}_t, \tilde{\eta}_x)\tilde{\eta}_x + \left(\tilde{\eta} +\varphi^0\right)\tilde{\eta}_x \right] \dd \Omega \\
- \biggl[\int_{\Omega} \tilde{F}_j(t, x,\tilde{\eta}_t, \tilde{\eta}_x) D_t\psi + \int_{\Omega} \tilde{F}_m(t, x,\tilde{\eta}_t, \tilde{\eta}_x) D_x\psi + \int_{\Omega} \left(\tilde{\eta} + \varphi^0\right) D_x\psi \\
+ \int_0^T ( \tilde{F}_m(t, 0, 0,
%\tilde{\eta}_t,
\tilde{\eta}_x) ) \psi(t,0) \dt - \int_0^T ( \tilde{F}_m(t, 1, 0,
%\tilde{\eta}_t,
\tilde{\eta}_x) + 1) \psi(t,1) \dt
\biggr] \geq 0,
\end{multline*}
\end{proposition}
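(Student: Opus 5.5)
The plan is Minty's trick at the $\varepsilon$-level, followed by a passage to the limit via Lemma~\ref{lem:limit_bv}. Fix $\tilde\eta\in\mathcal{S}^+$. Since $\tilde\eta\in\mathcal{C}^1(\bar\Omega)$, vanishes on $\partial\Omega$, and satisfies $\tilde\eta_x+\varphi^0_x>0$, it lies in $\mathcal{M}_0^+$. We may therefore take $w=\tilde\eta$ in \eqref{variational inequality}. Monotonicity of $A_\varepsilon$ (Proposition~\ref{Monotonicity theorem}) then gives
\[
\langle A_\varepsilon\tilde\eta,\tilde\eta-\psi^\varepsilon\rangle\geq \langle A_\varepsilon\psi^\varepsilon,\tilde\eta-\psi^\varepsilon\rangle\geq 0 .
\]
In this inequality the unknown $\psi^\varepsilon$ enters only linearly, through $\psi^\varepsilon$, $\psi^\varepsilon_t$ and $\psi^\varepsilon_x$, and is paired against explicit continuous functions of $\tilde\eta$. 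Passing to the limit in such an expression only requires weak$^*$ convergence of measures plus traces.

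Next I expand $\langle A_\varepsilon\tilde\eta,\tilde\eta-\psi^\varepsilon\rangle$ term by term.

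\emph{Regularization terms.} The $\varepsilon(|\tilde\eta|^{q-2}\tilde\eta(\tilde\eta-\psi^\varepsilon)+\dots)$ terms satisfy
\[
\varepsilon\|\tilde\eta\|_{W^{1,\infty}}^{q-1}\bigl(\|\tilde\eta\|_{W^{1,1}}+\|\psi^\varepsilon\|_{W^{1,1}}\bigr)\to0,
\]
using the uniform $W^{1,1}$ bound from Proposition~\ref{Estimates}.

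\emph{Perspective terms.} The coefficients $\tilde F_j(t,x,\tilde\eta_t,\tilde\eta_x+\varepsilon)$ and $\tilde F_m(\cdot)$ converge uniformly on $\bar\Omega$ to the same expressions with $\varepsilon=0$. This holds because $\tilde\eta_x+\varphi^0_x$ is continuous and positive on the compact set $\bar\Omega$, hence bounded below by some $\delta>0$, and $F_j,F_m$ are $\mathcal{C}^1$ (in particular continuous) on $[0,1]\times\mathbb{R}\times\mathbb{R}^+$. The terms paired with $\tilde\eta_t,\tilde\eta_x$ then converge by uniform convergence.

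\emph{Ranking term.} The piece $\int(\tilde\eta+\varphi^0)\tilde\eta_x$ does not involve $\varepsilon$.

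For the remaining terms I apply Lemma~\ref{lem:limit_bv} with $f_k=\psi^\varepsilon\in W^{1,1}_0$ and $\phi^k$ equal to the uniformly convergent coefficients. This lets me replace $\int\phi^k\psi^\varepsilon_t$ by $\int\phi\,D_t\psi-\int_{\partial\Omega}\phi\,\nu_t\tr\psi$, and similarly in $x$ with $\nu_x$. For the ranking part, $\int(\tilde\eta+\varphi^0)\psi^\varepsilon_x$ converges in the same way.

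The main step is the boundary bookkeeping. By Proposition~\ref{lem:temporal_trace}, $\tr\psi=0$ on $t=0$ and $t=T$, so every $\nu_t$ contribution disappears. On the lateral boundary, $\nu_x=-1$ at $x=0$ and $\nu_x=+1$ at $x=1$. The $\tilde F_m$ term thus yields $+\int_0^T\tilde F_m(t,0,\cdot)\psi(t,0)\dt-\int_0^T\tilde F_m(t,1,\cdot)\psi(t,1)\dt$. The ranking term yields $+\int_0^T(\tilde\eta+\varphi^0)(t,0)\psi(t,0)\dt-\int_0^T(\tilde\eta+\varphi^0)(t,1)\psi(t,1)\dt=-\int_0^T\psi(t,1)\dt$, since $\tilde\eta=0$, $\varphi^0(t,0)=0$ and $\varphi^0(t,1)=1$. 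These are exactly the boundary terms displayed in the statement, with signs flipped by the overall minus in front of the bracket.

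To identify the arguments on the boundary, note that $\tilde\eta=0$ on $\partial\Omega$ forces $\tilde\eta_t=0$ on $x=0,1$. Hence $\tilde F_m(t,x,\tilde\eta_t,\tilde\eta_x)=\tilde F_m(t,x,0,\tilde\eta_x)$ there, as written in the statement. Collecting all the limits gives precisely the claimed inequality. The only delicate checks are the signs of the outward normals and the uniform positivity of $\tilde\eta_x+\varphi^0_x$; the latter is what legitimizes the uniform convergence of the coefficients as $\varepsilon\to0$.
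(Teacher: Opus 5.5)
Your proposal is correct and follows essentially the same route as the paper. Both arguments apply Minty's trick at the $\varepsilon$-level through the monotonicity of $A_\varepsilon$, use uniform convergence of the coefficients on a compact set bounded away from zero mass, pass to the limit with trace terms via Lemma~\ref{lem:limit_bv}, remove the temporal boundary contributions with Proposition~\ref{lem:temporal_trace}, and do the lateral bookkeeping using $\tilde\eta=\tilde\eta_t=0$, $\varphi^0(t,0)=0$ and $\varphi^0(t,1)=1$. Your explicit check that $\tilde\eta\in\mathcal{M}_0^+$ and your sign computation with $\nu_x=\mp1$ simply spell out steps the paper leaves implicit.
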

\begin{proof}
We use Minty's method. Fix $\tilde\eta\in \mathcal{S}^+$.
Since $\psi^\varepsilon$ solves the regularized variational 
inequality~\eqref{variational inequality},
\[
\langle A_{\varepsilon} \psi^\varepsilon,\tilde\eta-\psi^\varepsilon\rangle \geq 0.
\]
By monotonicity of $A_\varepsilon$,
\[
\langle A_{\varepsilon} \tilde{\eta},\tilde{\eta} - \psi^{\varepsilon} \rangle
\geq \langle A_{\varepsilon} \psi^{\varepsilon} ,\tilde{\eta} - \psi^{\varepsilon} \rangle \geq 0.
\]
We consider the limit as $\varepsilon\to 0$. We show below that $\langle A_\varepsilon \tilde{\eta}, \tilde{\eta} - \psi^\varepsilon \rangle \to \langle A\tilde{\eta}, \tilde{\eta} - \psi \rangle$, from which the inequality $\langle A\tilde{\eta}, \tilde{\eta} - \psi \rangle \geq 0$ follows. By the definition of $A_\varepsilon$,
\begin{multline*}
\langle A_\varepsilon \tilde{\eta}, \tilde{\eta} - \psi^\varepsilon \rangle = \int_{\Omega} \tilde{F}_j(t, x,\tilde{\eta}_t, \tilde{\eta}_x+\varepsilon)\tilde{\eta}_t + \tilde{F}_m(t, x,\tilde{\eta}_t, \tilde{\eta}_x+\varepsilon)\tilde{\eta}_x + (\tilde{\eta} + \varphi^0)\tilde{\eta}_x \dd \Omega \\
- \int_{\Omega} \tilde{F}_j(t, x,\tilde{\eta}_t, \tilde{\eta}_x+\varepsilon) D_t\psi^\varepsilon - \int_{\Omega} \tilde{F}_m(t, x,\tilde{\eta}_t, \tilde{\eta}_x+\varepsilon) D_x\psi^\varepsilon - \int_{\Omega} (\tilde{\eta} + \varphi^0) D_x\psi^\varepsilon \\
 + \varepsilon \left( \int_{\Omega} \left\lvert \tilde{\eta} \right\rvert^{q} + \left\lvert D\tilde{\eta}\right\rvert^{q} - \left\lvert \tilde{\eta}\right\rvert^{q-2} \tilde{\eta} \psi^\varepsilon \dd \Omega - \int_{\Omega} \left\lvert D\tilde{\eta}\right\rvert^{q-2} D\tilde{\eta} \cdot D\psi^\varepsilon \right).
\end{multline*}
Similarly, the claimed limit is
\begin{multline*}
\langle A\tilde{\eta}, \tilde{\eta} - \psi \rangle = \int_{\Omega} \left[ \tilde{F}_j(t, x,\tilde{\eta}_t, \tilde{\eta}_x)\tilde{\eta}_t + \tilde{F}_m(t, x,\tilde{\eta}_t, \tilde{\eta}_x)\tilde{\eta}_x + (\tilde{\eta} + \varphi^0)\tilde{\eta}_x \right] \dd \Omega \\
- \int_{\Omega} \tilde{F}_j(t, x,\tilde{\eta}_t, \tilde{\eta}_x) D_t\psi - \int_{\Omega} \tilde{F}_m(t, x,\tilde{\eta}_t, \tilde{\eta}_x) D_x\psi - \int_{\Omega} (\tilde{\eta} + \varphi^0) D_x\psi \\ - \int_0^T ( \tilde{F}_m(t, 0, 0,
%\tilde{\eta}_t, 
\tilde{\eta}_x) ) \psi(t,0) \dt + \int_0^T ( \tilde{F}_m(t, 1, 
0,
%\tilde{\eta}_t,
\tilde{\eta}_x) + 1) \psi(t,1) \dt.
\end{multline*}

Observe that $(t, x,\tilde{\eta}_t\left(t,x\right), \tilde{\eta}_x\left(t,x\right)+\varepsilon) \rightarrow (t, x,\tilde{\eta}_t\left(t,x\right), \tilde{\eta}_x\left(t,x\right))$ uniformly on $\bar{\Omega}$, while staying inside the compact set, since $0<\varepsilon<1$,
\begin{equation}\label{eq:compact-set}
     \{(t,x,j,m)\colon (t,x)\in\bar{\Omega},\, j = \tilde{\eta}_t(t,x),\, \tilde{\eta}_x(t,x)\leq m \leq \tilde{\eta}_x(t,x)+1 \}.
\end{equation}
Because $\tilde{\eta} \in \mathcal{S}^+$, we have $\tilde{\eta}_x + \varphi^0_x > 0$. Thus, $m + \varphi^0_x \geq \tilde{\eta}_x + \varphi^0_x > 0$ on this set, which strictly avoids the zero-mass singularity of the perspective function $F$. 
Since $\tilde{F}_j$ and $\tilde{F}_m$ are uniformly continuous on the compact 
set~\eqref{eq:compact-set}, they preserve the uniform convergence.
Thus, we have
    \begin{equation*}
        \lim_{\varepsilon \rightarrow 0} \sup_{\left(t,x\right)} \lvert \tilde{F}_j(t, x,\tilde{\eta}_t, \tilde{\eta}_x+\varepsilon) - \tilde{F}_j(t, x,\tilde{\eta}_t, \tilde{\eta}_x)\rvert = 0,
    \end{equation*}
    and
    \begin{equation*}
        \lim_{\varepsilon \rightarrow 0} \sup_{\left(t,x\right)} \lvert \tilde{F}_m(t, x,\tilde{\eta}_t, \tilde{\eta}_x+\varepsilon) - \tilde{F}_m(t, x,\tilde{\eta}_t, \tilde{\eta}_x)\rvert = 0.
    \end{equation*}

By linearity, we write $\langle A_\varepsilon \tilde{\eta}, \tilde{\eta} - \psi^\varepsilon \rangle = \langle A_\varepsilon \tilde{\eta}, \tilde{\eta} \rangle - \langle A_\varepsilon \tilde{\eta}, \psi^\varepsilon \rangle$, and address the convergence in each term separately.
The first term, $\langle A_\varepsilon \tilde{\eta}, \tilde{\eta} \rangle$, is
\begin{equation*}
         \int_{\Omega} \left[ \tilde{F}_j(t, x,\tilde{\eta}_{t},\tilde{\eta}_{x} + \varepsilon) \tilde{\eta}_ {t} + \tilde{F}_m(t, x,\tilde{\eta}_{t},\tilde{\eta}_{x} + \varepsilon) \tilde{\eta}_ {x} + (\tilde{\eta} + \varphi^{0}) \tilde{\eta}_ {x} \right] \dd \Omega + \varepsilon \int_{\Omega} \left( \lvert \tilde{\eta}\rvert^{q} + \lvert D\tilde{\eta}\rvert^{q} \right) \dd \Omega.
\end{equation*}
    Taking the limit in the first integral, we have
    \begin{align*}
        \lim_{\varepsilon \rightarrow 0} \int_{\Omega} \tilde{F}_j(t, x,\tilde{\eta}_t, \tilde{\eta}_x+\varepsilon)\tilde{\eta}_t + \tilde{F}_m(t, x,\tilde{\eta}_t, \tilde{\eta}_x+\varepsilon)\tilde{\eta}_x + (\tilde{\eta} + \varphi^{0}) \tilde{\eta}_ {x} \dd \Omega \\\notag = \int_{\Omega} \tilde{F}_j(t, x,\tilde{\eta}_t, \tilde{\eta}_x)\tilde{\eta}_t + \tilde{F}_m(t, x,\tilde{\eta}_t, \tilde{\eta}_x)\tilde{\eta}_x + (\tilde{\eta} + \varphi^{0}) \tilde{\eta}_ {x}\dd \Omega.
    \end{align*}
    The second integral is $\varepsilon$ times a constant, hence converging to 0.
    The second term, $\langle A_\varepsilon \tilde{\eta}, \psi^\varepsilon \rangle$,
    reads
    \begin{align}
        \label{secondterm}
        \int_{\Omega} \tilde{F}_j(t, x,\tilde{\eta}_t, \tilde{\eta}_x+\varepsilon) D_t\psi^\varepsilon + \int_{\Omega} \tilde{F}_m(t, x,\tilde{\eta}_t, \tilde{\eta}_x+\varepsilon) D_x\psi^\varepsilon +\int_{\Omega} (\tilde{\eta} + \varphi^0) D_x\psi^\varepsilon \\\notag + \varepsilon  \int_{\Omega}  \left\lvert \tilde{\eta}\right\rvert^{q-2} \tilde{\eta} \psi^\varepsilon \dd \Omega + \varepsilon \int_{\Omega} \left\lvert D\tilde{\eta}\right\rvert^{q-2} D\tilde{\eta} \cdot D\psi^\varepsilon \dd \Omega
    \end{align}

    Since $D\psi^\varepsilon \cws D\psi$ in the sense of measures, we can pass to the limit in the linear term in~\eqref{secondterm} involving $D\psi^\varepsilon$.
    Specifically, the vector field $\Phi = (\tilde{F}_j, \tilde{F}_m + \tilde{\eta} + \varphi^0)$ converges uniformly to its limit (since $F$ is continuous and the arguments converge uniformly). Applying Lemma~\ref{lem:limit_bv} component-wise to the vector field $\Phi$, we obtain
    \begin{multline*}
         \lim_{\varepsilon \to 0} \langle A_\varepsilon \tilde{\eta}, \psi^\varepsilon \rangle
         = \int_{\Omega} \left[ \tilde{F}_j(t, x,\tilde{\eta}_t, \tilde{\eta}_x) D_{t}\psi + \tilde{F}_m(t, x,\tilde{\eta}_t, \tilde{\eta}_x) D_{x}\psi + (\tilde{\eta}+\varphi^{0}) D_{x}\psi \right] \\
         - \int_{\partial \Omega} \left[\tilde{F}_j(t, x,\tilde{\eta}_t, \tilde{\eta}_x)\nu_t + \left(\tilde{F}_m\left(t, x,\tilde{\eta}_t, \tilde{\eta}_x\right)+\tilde{\eta}+\varphi^{0}\right)\nu_x\right] \tr \psi \dd\mathcal{H}^{1}.
    \end{multline*}
By Proposition~\ref{lem:temporal_trace}, the trace $\tr \psi$ vanishes on the temporal boundaries ($t=0, T$) while $\nu_t=0$ on the rest of the boundary. Thus, the term $\tilde{F}_j \nu_t \tr \psi$ vanishes. 
Since $\tilde{\eta} \in \mathcal{S}^+$, we have $\tilde{\eta}(t,0) = \tilde{\eta}(t,1) = 0$, which also implies $\tilde{\eta}_t(t,0) = \tilde{\eta}_t(t,1) = 0$.
Together with $\varphi^0(t,0) = 0$ and $\varphi^0(t,1) = 1$, the boundary integrals simplify exactly to those required in the variational inequality:
\begin{equation*}
    \int_0^T \tilde{F}_m(t, 0, 0, \tilde{\eta}_x(t,0)) \psi(t,0) \dt - \int_0^T \left( \tilde{F}_m(t, 1, 0, \tilde{\eta}_x(t,1)) + 1 \right) \psi(t,1) \dt.
\end{equation*}
The terms involving $\varepsilon$ in~\eqref{secondterm} converge to zero because $\psi^\varepsilon$ is bounded in $L^1(\Omega)$ and $D\psi^\varepsilon$ is bounded in total variation, thanks to Proposition~\ref{Estimates}, while the coefficients involving $\tilde{\eta}$ are bounded uniformly.
Since $\lim_{\varepsilon \to 0} \langle A_\varepsilon \tilde{\eta}, \tilde{\eta} - \psi^\varepsilon \rangle 
= \langle A \tilde{\eta}, \tilde{\eta} - \psi \rangle$ and 
$\langle A_\varepsilon \tilde{\eta}, \tilde{\eta} - \psi^\varepsilon \rangle \geq 0$ 
for all $0 < \varepsilon < 1$, we conclude that $\langle A\tilde{\eta}, \tilde{\eta} - \psi \rangle \geq 0$.
\end{proof}

\begin{proof}[Proof of Theorem~\ref{Existence theorem}]
    Let $\psi^\varepsilon$ be the sequence of approximate solutions constructed in Proposition~\ref{Estimates}, and let $\psi \in BV(\Omega)$ be the limit function given by Corollary~\ref{cor:compactness}. We define $\varphi = \psi + \varphi^0$.

    First, we verify the properties of $\varphi$. Since $\psi \in BV(\Omega)$ and $\varphi^0 \in \mathcal{C}^2(\bar{\Omega})$, it follows that $\varphi \in BV(\Omega)$.
    By Proposition~\ref{lem:temporal_trace}, $\psi$ vanishes on the temporal boundaries $t=0$ and $t=T$ in the trace sense; therefore, $\varphi$ satisfies the initial and terminal conditions:
    \begin{equation*}
        \varphi(0, \cdot) = \varphi^0(0, \cdot) \quad \text{and} \quad \varphi(T, \cdot) = \varphi^0(T, \cdot).
    \end{equation*}
    Similarly, $\psi(t,0)\geq 0$ and $\psi(t,1)\leq 0$; therefore, $\varphi(t,0)\geq \varphi^0(t,0) = 0$ and $\varphi(t,1)\leq \varphi^0(t,1) = 1$. Furthermore, since $\psi^\varepsilon \in \mathcal{M}_0^+$, we have $ \psi_{x}^\varepsilon + \varphi_{x}^0 \geq 0$. Passing to the limit implies $D_x \psi + \varphi_{x}^0 \geq 0$ in the sense of measures, which means $D_x \varphi \geq 0$.

    Now, let $\eta$ be any test function satisfying the conditions of Definition~\ref{def:weak_solution}, i.e., $\eta \in \mathcal{C}^1(\bar{\Omega})$, $\eta_x > 0$, and $\eta = \varphi^0$ on $\partial\Omega$. 
    We define $\tilde{\eta} = \eta - \varphi^0$. Since $\eta = \varphi^0$ on the boundary, $\tilde{\eta} = 0$ on $\partial\Omega$. Moreover, $\tilde{\eta}_x + \varphi^0_x = \eta_x > 0$. Thus, $\tilde{\eta} \in \mathcal{S}^+$, i.e., it is a valid test function for the variational inequality established in Proposition~\ref{existence of psi}.
     Substituting $\psi = \varphi - \varphi^0$ and $\tilde{\eta} = \eta - \varphi^0$ into the inequality of Proposition~\ref{existence of psi}, and utilizing the definitions
\begin{equation*}
    \tilde{F}_j(t, x, \tilde{\eta}_t, \tilde{\eta}_x) = -F_j(x, -\eta_t, \eta_x) \quad \text{and} \quad \tilde{F}_m(t, x, \tilde{\eta}_t, \tilde{\eta}_x) = F_m(x, -\eta_t, \eta_x),
\end{equation*}
we find that the contributions involving $\varphi_t^0$ and $\varphi_x^0$ cancel exactly between the integrals upon substituting $\psi=\varphi-\varphi^0$. 

Moreover, since $\eta=\varphi^0$ on $\partial\Omega$, with $\varphi^0(t,0)=0$ and $\varphi^0(t,1)=1$, we have $\eta_t(t,0)=\eta_t(t,1)=0$. Recalling the derivative identity for the perspective function, we note that $F_m(x,0,m)=L(x,0)$. Hence, the boundary integrals simplify to:
\begin{align*}
&\int_0^T F_m\bigl(0,-\eta_t(t,0),\eta_x(t,0)\bigr)\,\varphi(t,0)\dt
 - \int_0^T \Big(F_m\bigl(1,-\eta_t(t,1),\eta_x(t,1)\bigr)+1\Big)\,(\varphi(t,1)-1)\dt \\
&=L(0,0)\int_0^T \varphi(t,0)\dt + \bigl(L(1,0)+1\bigr)\int_0^T (1-\varphi(t,1))\dt.
\end{align*}
This yields the exact variational inequality formulation of Definition~\ref{def:weak_solution}. Consequently, $\varphi$ is a weak solution to Problem~\ref{associated problem}.
\end{proof}

\end{document}